\title[The random Cucker-Smale model with uncertain communication]{Local sensitivity analysis for the Cucker-Smale model with random inputs}
\author[Ha]{Seung-Yeal Ha}
\address[Seung-Yeal Ha]{\newline Department of Mathematical Sciences and Research Institute of Mathematics \newline Seoul National University, Seoul 08826 \newline
and Korea Institute for Advanced Study, Hoegiro 87, Seoul 02455, Korea (Republic of)}
\email{syha@snu.ac.kr}
\author[Jin]{Shi Jin}
\address[Shi Jin]{\newline Department of Mathematical Sciences, \newline University of Wisconsin-Madison, Madison, WI 53706, USA}
\email{jin@math.wisc.edu}
\newtheorem{theorem}{Theorem}[section]
\newtheorem{lemma}{Lemma}[section]
\newtheorem{corollary}{Corollary}[section]
\newtheorem{remark}{Remark}[section]
\newtheorem{definition}{Definition}[section]
\newcommand{\bbr}{\mathbb R}
\newcommand{\bbz}{\mathbb Z}
\newcommand{\bbe}{\mathbb E}
\def\charf {\mbox{{\text 1}\kern-.30em {\text l}}}
\begin{document}

\date{\today}

\subjclass{15B48, 92D25} \keywords{Cucker-Smale model, flocking, local sensitivity analysis, random communication, uncertainty quantification}

\thanks{\textbf{Acknowledgment.} The work of S.-Y. Ha was supported by National Research Foundation of Korea(NRF-2017R1A2B2001864), and the work of S. Jin was supported by NSF grants DMS-1522184 and DMS-1107291: RNMS KI-Net by NSFC grant No. 91330203 and by the Office of the Vice Chancellor for Research and Graduate Education at the University of Wisconsin.}

\begin{abstract}
We present pathwise flocking dynamics and local sensitivity analysis for the Cucker-Smale(C-S) model with random communications and initial data. For the deterministic communications, it is well known that the C-S model can model emergent local and global flocking dynamics depending on  initial data and integrability of communication function. However, the communication mechanism between agents are not a priori clear and needs to be figured out from observed phenomena and data. Thus, uncertainty in communication is an intrinsic component in the flocking modeling of the C-S model. In this paper, we provide a class of admissible random uncertainties which allows us to perform the local sensitivity analysis for flocking and establish stability to the random C-S model with uncertain communication.
\end{abstract}
\maketitle \centerline{\date}


\section{Introduction}
\setcounter{equation}{0}
Emergent phenomena in complex systems are ubiquitous in our nature, to name a few, flocking of birds, aggregation of bacteria and swarming of fish \cite{C-F-T-V, C-H-L, M-T1, T-T} etc. In this paper, we use the terminology "flocking" to denote some concentration phenomena in velocity where individual particles are organized into an ordered motion using the simple rules and environment information through hidden communication mechanism. Modeling of such collective dynamics has received lots of attention in control theory community due to recent  applications in the unmanned aerial vehicles (e.g., drons), unmanned cars and sensor networks \cite{L-P-L-S, P-L-S-G-P, P-E-G}. Several mathematical models were proposed in \cite{C-S, D-M, M-T2, T-T, V-C-B-C-S} and have been studied in relation to the emergent dynamics. Among them, our main interest in this paper lies on the particle model proposed by Cucker and Smale about a decade ago. In \cite{C-S}, Cucker and Smale proposed a Newton-like second-order model and provided some analytical results for the emergent dyanmics of the C-S model. More precisely, let $x_i \in \bbr^d$ and $v_i$ be the position and velocity of the $i$-th C-S particle with unit mass. Then, the dynamics of $(x_i, v_i)$ is governed by the following second-order model:
\begin{align}
\begin{aligned} \label{C-S}
\frac{d}{dt} x_i(t)  &= v_i(t), \quad t > 0, \quad i =1, \cdots, N, \\
\frac{d}{dt} v_i(t) &= \frac{1}{N} \sum_{j=1}^{N} \psi(x_j(t) - x_i(t)) (v_j(t) - v_i(t)),
\end{aligned}
\end{align}
where $\psi = \psi(x)$ is a communication weight function depending on the relative distances between C-S particles. After Cucker-Smale's seminal work in \cite{C-S}, the C-S model \eqref{C-S} has been extensively studied in many aspects, e.g., collision-avoidance \cite{A-C-H-L, C-D2}, stochastic noise effects \cite{A-H, C-M, H-L-L}, emergence of local and global flockings \cite{C-H-H-J-K, C-S, H-Liu, H-T, Sh},  kinetic and hydrodynamic descriptions \cite{C-F-R-T, D-F-T, H-K-K2, K-V}, generalized C-S models \cite{C-D1, K-M-T1, K-M-T2, K-M-T3, M-T2}. The fcuntion $\psi$ is an empirical function, and in real applications, the determination of $\psi$ is based on the phenomenology or modeler's free will. Thus, the uncertainty in $\psi$ is intrinsic. Then, a natural question is how uncertainty in communication  and initial data can affect dynamic features such as flocking and stability. The stochastic noise effect in the C-S flocking was partly addressed in \cite{A-H}, where the communication function $\psi$ is decomposed into a deterministic part and a white noise part. This decomposition turns the deterministic system \eqref{C-S} into a stochastic C-S model with a multiplicative noise. Several elementary stochastic estimates for the velocity process have been studied using Ito's calculus in \cite{A-H, H-L-L}.

In this paper, we consider a general communciation weight function $\psi$ with uncertainty, and we do not assume any specific form for $\psi$ unlike the ansatz in \cite{A-H} so that our model can cover a bounded noises \cite{V-C-B-C-S} as well. With this motivation in mind, we consider a random C-S model with uncertain communication $\psi = \psi(x,z)$ and uncertain initial data depending on $z$, where $z$ is a random variable defined on sample space $\Omega$ with a  probability density function (pdf) $\pi(z)$.
The random processes $(x_i(t,z), v_i(t,z))$ is governed by the following random C-S model:
\begin{align}
\begin{aligned} \label{main}
\partial_t x_i(t,z)  &= v_i(t,z), \quad t > 0, \quad i =1, \cdots, N, \\
\partial_t v_i(t,z) &= \frac{1}{N} \sum_{j=1}^{N} \psi(x_j(t,z) - x_i(t,z), z) (v_j(t,z) - v_i(t,z)),
\end{aligned}
\end{align}
where the communication function $\psi = \psi(x, z) = {\tilde \psi}(|x|, z)$ is radially symmetric in the first argument,  We also assume that $\psi(\cdot, z)$ satisfies several structural properties such as the positivity, boundedness monotonicity and Lipschitz continuity in the first argument: for $z \in \Omega$,
\begin{align}
\begin{aligned} \label{comm}
& 0 < \psi(x, z) \leq \psi_M < \infty, \quad \psi(-x,z) = \psi(x,z), \quad (x,z) \in \bbr^d \times \Omega, \\
&  ({\tilde \psi}(|x_2|, z) - {\tilde \psi}(|x_1|, z)) (|x_2| - |x_1|) \leq 0, \quad \psi(\cdot, z) \in \mbox{Lip}(\bbr^d; \bbr_+),  \\
\end{aligned}
\end{align}
The purpose of this paper is to study theoretical certain aspects of  the C-S model \eqref{main} - \eqref{comm} under the effect of random communications.
Similar analysis has been done recently for kinetic equations with random
uncertainties from initial data and/or collison kernels,  see \cite{H-J, H-J-review, H-J-X, J-L, J-L1, J-L-M, J-X-Z1, J-X-Z2, J-Z, L-W, S-J} for uncertainty quantification in kinetic and hyperbolic models. Recently, the works \cite{A-P-Z, C-P-Z} addressed uncertainty quantification for the swarming models from a numerical point of view. Thus, our work can be viewed as a theoretical justification of their works (see \cite{Xi, X-K} for related works on uncertainty quantification). \newline

The main results of this paper are as follows. First, we give conditions on
the random communication function such that the random dynamical system \eqref{main} exhibits the same asymptotic flocking dynamics  as the deterministic
C-S model \cite{C-S} along the sample path.
 We also provide average dynamics of position and velocity processes. Second, we provide a local sensitivity analysis for \eqref{main} - \eqref{comm}. In particular, we show that the regularity in random space is propagated along the C-S flow and it is stable with respect to initial data. \newline

The rest of this paper is organized as follows. In Section \ref{sec:2}, we review a theoretical minimum for \eqref{main} - \eqref{comm}. In Section \ref{sec:3}, we provide a pathwise flocking and stability dynamics of the random C-S model. In Section \ref{sec:4}, we provide two local sensivity analysis such as the propagation of regularity and its stability analysis with respect to initial data. Finally Section \ref{sec:5} is devoted to the brief summary of our main results and future works. In Appendix A and Appendix B, we present a proof of  Gronwall type lemma and chain rule for higher derivatives of composition of two functions, which are crucially used in Section \ref{sec:3} and Section \ref{sec:4}.

\bigskip

\noindent {\bf Gallery of Notation}: Let $\pi: \Omega \to \bbr_+ \cup \{0\}$ and  $\varphi = \varphi(z)$ be nonnegative pdf function and scalar-valued random function defined on the sample space $\Omega$, respectively. Then, we define the expected value as
\[ \bbe[\varphi] := \int_\Omega \varphi(z) \pi(z) dz, \]
and a weighted $L^2$-space:
\[ L_\pi^2(\Omega) := \{ y:~\Omega \to \bbr~|~\int_{\Omega} |y(z)|^2 \pi(z) dz < \infty \}, \]
with an inner product and norm:
\[ \langle y_1, y_2 \rangle_{L^2_\pi(\Omega)} := \int_{\Omega} y_1(z) y_2(z) \pi(z) dz, \qquad ||y||_{L^2_\pi(\Omega)} := \Big( \int_{\Omega} |y(z)|^2 \pi(z) dz \Big)^{\frac{1}{2}} = \sqrt{\bbe[|y|^2]}. \]
For $k \in \bbz_+ \cup \{0 \}$, we set
\[ ||y||_{H^k_\pi(\Omega)} := \Big( \sum_{\ell = 0}^{k} ||\partial_z^{\ell} y||^2_{L^2_\pi(\Omega)} \Big)^{\frac{1}{2}}, \quad   ||y||_{H_\pi^0(\Omega)} := ||y||_{L_\pi^2(\Omega)}. \]
Moreover, as long as there is no confusion, we suppress $\pi$ and $\Omega$ dependence in $L^2_\pi(\Omega)$-norm and $H^k_\pi(\Omega)$-norm:
\[ ||y||_{L_z^2} :=||y||_{L^2_\pi(\Omega)}, \quad  ||y||_{H_z^k} :=||y||_{H^k_\pi(\Omega)}. \]
For a vector-valued function $y(z) = (y^1(z), \cdots, y^d(z)) \in \bbr^d$, we set
\[ \|y(z)\| := \Big( \sum_{i=1}^{d} |y^i(z)|^2 \Big)^{\frac{1}{2}}, \qquad  ||y||_{L^2_z} := \Big( \sum_{i=1}^{d} ||y^i||^2_{L^2_z} \Big)^\frac{1}{2}.  \]
We also set
\[
  \label{XV}
  X(t,z) := (x_1(t,z), \cdots, x_N(t,z)), \quad V(t,z) := (v_1(t,z), \cdots, v_N(t,z)).
\]
Finally, we use $f \lesssim g$ to denote that there exists a positive generic constant $C$ such that $f \leq C g$.

\section{Preliminaries} \label{sec:2}
\setcounter{equation}{0}
In this section, we briefly review a theoretical minimum for the random C-S flocking model with uncertain communication. First, we recall the local sensitivity analysis of the random ODE system and review the propagation of velocity moments. \newline

\subsection{The random C-S model} Consider an ensemble consisting of $N$-identical C-S particles with the same mass in $\bbr^d$ under uncertain communications registered by $\psi = \psi(x, z)$. \newline

Let $( x_i(t, z), v_i(t,z))\in \mathbb{R}^{2d}$ be the position-velocity processes of the $i$-th particle. Then, their dynamics is governed by the Cauchy problem for the random C-S model:
\[
 \begin{cases}
\displaystyle  \partial_t x_i(t,z) = v_i(t,z), ~~t > 0, \quad i = 1, \cdots, N, \\
\displaystyle  \partial_t v_i(t,z) = \frac{1}{N} \sum_{j=1}^{N} \psi(x_j(t,z) - x_i(t,z), z) (v_j(t,z) - v_i(t,z)), \\
  (x_i(0,z), v_i(0,z)) = (x^0_{i}(z), v^0_{i}(z)).
 \end{cases}
\]
Next, we present definition of {\it pathwise mono-cluster flocking}
for the C-S ensemble.
\begin{definition} \label{D2.1}
\emph{\cite{C-S, H-T}} A random ensemble ${\mathcal P}:= \{(x_i(t,z), v_i(t,z) \}_{i=1}^{N}$ has a asymptotic {\it pathwise mono-cluster flocking} if the ensemble ${\mathcal P}$ satisfies the following two
conditions: for $z \in \Omega$,
\begin{enumerate}
\item
(Formation of velocity alignment):
\[  \lim_{t \to \infty} \max_{i,j} || v_j(t,z) - v_i(t,z)|| = 0.\]

\vspace{0.1cm}

\item
(Formation of group):
\[ \displaystyle \sup_{0 \leq t < \infty} \max_{i,j} || x_j(t,z) - x_i(t,z)|| < \infty. \]
\end{enumerate}
\end{definition}
In the following lemma, we study the time-evolution of first and second velocity moments to be used in later sections.
\begin{lemma} \label{L2.1}
Let $\{ (x_i(t,z), v_i(t,z)) \}_{i=1}^{N}$ be a solution process to the C-S model \eqref{main} - \eqref{comm}. Then, for any $t > 0$ and $z \in \Omega$,  we have
\begin{eqnarray*}
&& (i)~\partial_t \sum_{i=1}^{N} v_i(t,z) =  0. \cr
&& (ii)~\partial_t \sum_{i=1}^{N} \|v_i(t,z) \|^2 = -  \frac{1}{N} \sum_{i,j=1}^{N} \psi(x_j(t,z) - x_i(t,z), z) \|v_j(t,z) - v_i(t,z)\|^2.
\end{eqnarray*}
\end{lemma}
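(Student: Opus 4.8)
The plan is to prove both identities by direct differentiation of the relevant moments along the flow \eqref{main}, using the symmetry of the communication weight $\psi(-x,z) = \psi(x,z)$ from \eqref{comm} together with an index-swapping (antisymmetrization) argument. For part (i), I would differentiate $\sum_{i=1}^N v_i(t,z)$ in time, substitute the second equation of \eqref{main}, and obtain $\partial_t \sum_i v_i = \frac{1}{N}\sum_{i,j} \psi(x_j - x_i, z)(v_j - v_i)$. The double sum runs over the full index set $\{1,\dots,N\}^2$; relabeling $(i,j) \mapsto (j,i)$ and using $\psi(x_i - x_j, z) = \psi(x_j - x_i, z)$ shows the sum equals its own negative, hence vanishes. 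This is the standard momentum-conservation computation and presents no real obstacle.

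For part (ii), I would compute $\partial_t \sum_{i=1}^N \|v_i\|^2 = 2\sum_{i=1}^N \langle v_i, \partial_t v_i \rangle$ and insert the velocity equation, giving $\frac{2}{N}\sum_{i,j} \psi(x_j - x_i, z)\,\langle v_i, v_j - v_i\rangle$. Here I would again symmetrize in $(i,j)$: averaging the expression with its relabeled copy and using the symmetry of $\psi$ in its spatial argument, the bracket $\langle v_i, v_j - v_i \rangle$ combines with $\langle v_j, v_i - v_j \rangle$ to produce $-\|v_j - v_i\|^2$, after noting $\langle v_i, v_j - v_i\rangle + \langle v_j, v_i - v_j\rangle = -\|v_i - v_j\|^2$. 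This yields the claimed identity with the correct sign and the factor $\tfrac1N$. Since each sum is finite and the solution is assumed smooth in $t$, interchanging differentiation and summation is trivially justified, so there is no analytic subtlety.

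The only mild care needed is bookkeeping: keeping track of the factor $2$ from differentiating $\|v_i\|^2$ against the $\tfrac1N$ normalization, and making sure the antisymmetrization is carried out over the full (unrestricted) double sum rather than over ordered pairs. I do not anticipate a genuine obstacle here — the lemma is an algebraic identity whose proof is a one-line symmetrization once the time derivative is written out. If one wished to be slightly more careful, one could first record the elementary algebraic identity $2\langle a, b - a\rangle + 2\langle b, a - b\rangle = -2\|a-b\|^2$ for $a,b \in \bbr^d$ and then apply it termwise after pairing $(i,j)$ with $(j,i)$; that reduces part (ii) to pure symmetry of $\psi$.
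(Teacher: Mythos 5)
Your proposal is correct and follows essentially the same argument as the paper: both parts are proved by substituting the velocity equation, relabeling $(i,j)\leftrightarrow(j,i)$, and using the symmetry $\psi(-x,z)=\psi(x,z)$, with the identity $\langle v_i, v_j-v_i\rangle + \langle v_j, v_i-v_j\rangle = -\|v_j-v_i\|^2$ producing the dissipation term in (ii). No issues.
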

\begin{proof} (i)~We sum $\eqref{main}_2$ over all $i$, and use exchange transformation $(i,j) \leftrightarrow (j,i)
$ to obtain
\begin{eqnarray*}
\partial_t \sum_{i=1}^{N} v_i(t,z) &=& \frac{1}{N} \sum_{i, j=1}^{N} \psi(x_j(t,z) - x_i(t,z), z) (v_j(t,z) - v_i(t,z)) \cr
&=&  -\frac{1}{N} \sum_{i, j=1}^{N} \psi(x_j(t,z) - x_i(t,z), z) (v_j(t,z) - v_i(t,z)) \cr
&=& 0,
\end{eqnarray*}
where we used $\psi(-x,z) = \psi(x,z)$ in \eqref{comm}.  \newline

\noindent (ii) We take an inner product $\eqref{main}_2$ with $2v_i(t,z)$ and sum it over all $i$  to obtain
\begin{align*}
\begin{aligned}
\partial_t \sum_{i=1}^{N} \|v_i(t,z) \|^2 &= \frac{2}{N} \sum_{i,j=1}^{N} \psi(x_j(t,z) - x_i(t,z), z) v_i(t,z) \cdot (v_j(t,z) - v_i(t,z)) \\
&=  -\frac{2}{N} \sum_{i,j=1}^{N} \psi(x_j(t,z) - x_i(t,z), z) v_j(t,z) \cdot (v_j(t,z) - v_i(t,z)) \\
&=  -  \frac{1}{N} \sum_{i,j=1}^{N} \psi(x_j(t,z) - x_i(t,z), z) \|v_j(t,z) - v_i(t,z)\|^2.
\end{aligned}
\end{align*}
\end{proof}
\begin{remark} \label{R2.1}
We set the first and second velocity moments:
\[ m_1(t,z) :=  \sum_{i=1}^{N} v_i(t,z), \quad m_2(t,z) :=  \sum_{i=1}^{N} \|v_i(t,z) \|^2.       \]
As a direct corollary of Lemma \ref{L2.1}, we have the following assertions:
\begin{enumerate}
\item
The modulus of $v_i(t,z)$ is uniformly bounded:
\[
\|v_i(t,z)\|  \leq \sqrt{m_2(t,z)} \leq \sqrt{m_2(0,z)}, \quad z \in \Omega,~~t \geq 0.
\]
This and Cauchy-Schwarz's inequality yield
\[ \sup_{0 \leq t < \infty} \bbe[\|v_i(t)\|] \leq \sqrt{ \bbe[m_2(0)]}. \]
\item
Uniform boundedness of mean and variance:
 \[ \bbe[m_1(t)] = \bbe[m_1(0)], \quad \bbe[m_2(t)] \leq \bbe[m_2(0)], \quad t \geq 0. \]
\end{enumerate}
\end{remark}

\begin{lemma} \label{L2.2}
Let $\{ (x_i(t,z), v_i(t,z)) \}_{i=1}^{N}$ be a solution process to the system \eqref{main}-\eqref{comm} with zero total momentum:
\[ m_1(0,z) = \sum_{i=1}^{N} v^0_i(z) = 0, \quad z \in \Omega. \]
Then, for any $k \geq 1$, we have
\[ \sum_{i=1}^{N} \partial_z^{k} v_i(t,z)  = 0, \quad t > 0,~~ z \in \Omega. \]
\end{lemma}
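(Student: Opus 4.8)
The plan is to reduce the claim to the single conservation law already established in Lemma \ref{L2.1}(i) and then differentiate it in the random parameter $z$.

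First, I would recall from Lemma \ref{L2.1}(i) that $\partial_t m_1(t,z) = \partial_t \sum_{i=1}^{N} v_i(t,z) = 0$ for every $t > 0$ and $z \in \Omega$. Hence, for each fixed $z$, the total momentum $m_1(t,z)$ is constant in time and equals its initial value; by the zero-momentum hypothesis,
\[
\sum_{i=1}^{N} v_i(t,z) = \sum_{i=1}^{N} v^0_i(z) = m_1(0,z) = 0, \qquad t \geq 0, \quad z \in \Omega .
\]
Thus the vector-valued map $z \mapsto \sum_{i=1}^{N} v_i(t,z)$ is identically zero on $\Omega$ for each fixed $t$. Differentiating this identity $k$ times with respect to $z$ and using that the finite sum over $i$ commutes with $\partial_z^k$, I obtain
\[
\sum_{i=1}^{N} \partial_z^{k} v_i(t,z) = \partial_z^{k} \Big( \sum_{i=1}^{N} v_i(t,z) \Big) = 0, \qquad t > 0, \quad z \in \Omega,
\]
which is exactly the assertion. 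An equivalent route is to differentiate the conservation law $\partial_t \sum_i v_i(t,z) = 0$ directly in $z$, interchange $\partial_t$ and $\partial_z^k$ to see that $\sum_i \partial_z^k v_i(t,z)$ is time-independent, and then evaluate at $t=0$ using $\partial_z^k m_1(0,z) = \partial_z^k(0) = 0$.

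The only point that needs care — and the place where the "real" work sits — is the justification that $v_i(\cdot,z)$ is $k$-times differentiable in $z$, so that the interchange of $\partial_z^k$ with the finite sum (and, in the alternative argument, with $\partial_t$) is legitimate and $\partial_z^k m_1(0,z)$ makes sense. This regularity in the random variable follows from the boundedness and smoothness assumptions on $\psi(\cdot,z)$ in $z$ together with the standard smooth-dependence-on-parameters theory for ODE systems, and is in any case quantified by the local sensitivity estimates developed in Section \ref{sec:4}; granting it, the computation above is immediate.
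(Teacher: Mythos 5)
Your argument is exactly the paper's: conservation of total momentum from Lemma \ref{L2.1}(i) plus the zero-momentum hypothesis gives $\sum_i v_i(t,z)\equiv 0$, and differentiating $k$ times in $z$ yields the claim. The additional remark on justifying $z$-differentiability is a reasonable point of care, but the approach and substance match the paper's proof.
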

\begin{proof} Note that the total momentum is conserved along the dynamics \eqref{main} - \eqref{comm}:
\[
\sum_{i=1}^{N} v_i(t,z)  = \sum_{i=1}^{N} v^0_i(z) = 0, \quad t > 0, \quad z \in \Omega.
\]
Then, we differentiate the above relation $k$-times with respect to $z$ to get
\[ \sum_{i=1}^{N} \partial_z^{k} v_i(t,z)  = 0. \quad t > 0.  \]
\end{proof}

\subsection{Local sensivity analysis} Sensivity analysis deals wtih the effects on the output from the input for mathematical and simulation models \cite{S-R}. Below, we briefly discuss local sensitivity analysis following the presentation in \cite{J-T-Z}. Consider the Cauchy problem with the random ordinary differential equations with random initial data:
\[ \label{B-1}
\begin{cases}
\partial_t X(t,z) = F(t, X(t,z), z), \quad t > 0,~~z \in \Omega, \\
X(0,z) = X^0(z),
\end{cases}
\]
where $X:~\bbr_+ \times \Omega \to \bbr^d$ is a random field, and $z$ is system parameters taking a value in $\Omega \subset \bbr^m$. The sensitivity analysis studies the effect on $X$ with respect to the small perturbation $dz$ of $z$:
\[ x_i(t, z + dz) = x_i(t,z) + \sum_{k=1}^m \frac{\partial x_i}{\partial z_k} dz_k + \sum_{i,j = 1}^{m} \frac{\partial^2 x_i}{\partial z_i \partial z_j} dz_i dz_j + \cdots. \]
Then, we define sensivity matrices consisting of coefficients as follows:
\[ S^1 := \Big( \frac{\partial x_i}{\partial z_k} \Big), \quad S^2:= \Big( \frac{\partial^2 x_i}{\partial z_i \partial z_j}  \Big), \cdots  \]
In the following two sections, we will study pathwise flocking estimates, Sobolev estimates and stability estimate of the above sensivity matrices for the random C-S model \eqref{main} - \eqref{comm}.

\section{Pathwise flocking and stability estimates} \label{sec:3}
\setcounter{equation}{0}
 In this section, we present two pathwise estimates, the asymptotic mono-cluster flocking estimate and uniform $\ell_2$-stability estimate for the random C-S model.

 \subsection{Mono-cluster flocking}
 In this subsection, we
 present a pathwise flocking estimate in Definition \ref{D2.1} using the Lyapunov functional approach \cite{A-C-H-L, H-Liu}. Although the arguments are similar to the case of deterministic C-S model \eqref{C-S}, for reader's convenience, we briefly sketch the pathwise flocking estimate in the sequel. For $X = (x_1, \cdots, x_N) \in \bbr^{dN}$ and $V = (v_1, \cdots, v_N) \in \bbr^{Nd}$, we set
\[ \label{C-1}
||X(t,z)|| := \Big( \sum_{i=1}^{N} ||x_i(t,z)||^2 \Big)^{\frac{1}{2}}, \quad ||V(t,z)|| := \Big( \sum_{i=1}^{N} ||v_i(t,z)||^2 \Big)^{\frac{1}{2}}.
\]
\begin{lemma} \label{L3.1}
Let $\{ (x_i(t,z), v_i(t,z) ) \}_{i=1}^{N}$ be a global smooth solution to \eqref{main} - \eqref{comm} with zero initial total momentum. Then, the functionals $||X||$ and $||V||$ satisfy the system of dissipative differential inequalities (SDDI):  for a.e. $t \in (0, \infty)$ and $z \in \Omega$,
\begin{equation} \label{C-2}
\begin{cases}
\displaystyle \Big| \partial_t ||X(t,z)|| \Big| \leq  ||V(t,z)||, \\
\displaystyle \partial_t ||V(t,z)|| \leq -\psi (\sqrt{2} ||X(t,z)||, z) ||V(t,z)||.
\end{cases}
 \end{equation}
\end{lemma}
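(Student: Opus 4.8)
The plan is to derive the two differential inequalities in \eqref{C-2} separately by direct differentiation and the structural properties \eqref{comm} of $\psi$, together with the conservation of total momentum (which is available since the initial total momentum vanishes and Lemma \ref{L2.1}(i) applies). For the first inequality, I would compute $\partial_t \|X(t,z)\|^2 = 2\sum_i x_i \cdot \partial_t x_i = 2\sum_i x_i \cdot v_i$ using $\eqref{main}_1$, hence $\|X\| \, \partial_t \|X\| = \sum_i x_i \cdot v_i$, and then apply Cauchy--Schwarz, $\left|\sum_i x_i \cdot v_i\right| \le \|X\|\,\|V\|$, to conclude $\left|\partial_t \|X\|\right| \le \|V\|$ wherever $\|X\| \neq 0$ (and handle the zero set by the usual a.e.\ argument, using that $\|X\|$ is Lipschitz in $t$).

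For the second inequality, I would differentiate $\|V(t,z)\|^2$ and use Lemma \ref{L2.1}(ii) to get
\[
\|V\| \, \partial_t \|V\| = \tfrac12 \partial_t \|V\|^2 = -\frac{1}{2N}\sum_{i,j=1}^N \psi(x_j - x_i, z)\,\|v_j - v_i\|^2.
\]
The key step is to bound the right-hand side from above by $-\psi(\sqrt 2 \|X\|, z)\,\|V\|^2$. Using the monotonicity of $\tilde\psi$ in $|x|$ and $\psi(x,z) = \tilde\psi(|x|,z)$, I would show $\psi(x_j - x_i, z) \ge \tilde\psi(\max_{k,\ell}|x_k - x_\ell|, z)$ and then relate the maximal relative distance to $\|X\|$: since the total momentum (and hence, after subtracting the mean position, one may assume the center of mass is convenient to track), one has $\max_{i,j}\|x_i - x_j\| \le \sqrt 2 \|X\|$ when $\sum_i x_i = 0$; more robustly, $\|x_i - x_j\| \le \|x_i\| + \|x_j\| \le \sqrt 2\,(\|x_i\|^2 + \|x_j\|^2)^{1/2} \le \sqrt2\,\|X\|$, so $\tilde\psi(\|x_i-x_j\|,z) \ge \tilde\psi(\sqrt2\|X\|,z)$ by monotonicity. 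Pulling this lower bound out of the sum gives
\[
-\frac{1}{2N}\sum_{i,j}\psi(x_j-x_i,z)\|v_j-v_i\|^2 \le -\frac{\psi(\sqrt2\|X\|,z)}{2N}\sum_{i,j}\|v_j-v_i\|^2.
\]
Finally I would use the identity $\frac{1}{2N}\sum_{i,j}\|v_j - v_i\|^2 = \sum_i \|v_i\|^2 - \frac1N\big\|\sum_i v_i\big\|^2 = \|V\|^2$, where the last equality uses $\sum_i v_i = 0$ from conservation of momentum. Dividing through by $\|V\|$ (again with the a.e.\ caveat where $\|V\| = 0$) yields the claimed bound $\partial_t \|V\| \le -\psi(\sqrt2\|X\|,z)\|V\|$.

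The main obstacle is the careful handling of the points where $\|X\|$ or $\|V\|$ vanishes, so that dividing by these quantities is legitimate and the inequalities hold in the a.e.\ sense; this is standard (one works on the open set where the quantity is positive and invokes Lipschitz continuity in $t$, so the measure-zero exceptional set does not matter), but it must be stated. A secondary point to get right is the constant $\sqrt2$: it comes precisely from the bound $\|x_i - x_j\| \le \sqrt2\,\|X\|$, which is sharp, and from the fact that $\psi$ is decreasing in its (radial) argument so enlarging the argument only decreases $\psi$, preserving the direction of the inequality. Everything else is a routine computation using \eqref{main}, \eqref{comm}, and Lemma \ref{L2.1}.
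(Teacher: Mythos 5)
Your proposal is correct and coincides with the standard argument the paper invokes: the paper omits the proof of Lemma \ref{L3.1}, simply citing Lemma 3.1 of \cite{A-C-H-L}, and your computation (Cauchy--Schwarz for $\|X\|$, Lemma \ref{L2.1}(ii) plus the monotonicity of ${\tilde \psi}$ with the bound $\|x_i-x_j\|\le\sqrt{2}\|X\|$, and the identity $\frac{1}{2N}\sum_{i,j}\|v_j-v_i\|^2=\|V\|^2$ under zero total momentum) is precisely that argument. The a.e.\ caveats at the zeros of $\|X\|$ and $\|V\|$ are handled as you indicate, so there is nothing to add.
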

\begin{proof}
For fixed $z \in \Omega$, system \eqref{main} is the deterministic C-S system \eqref{C-S}. Hence, we can employ the same argument in Lemma 3.1 of \cite{A-C-H-L}. Thus, we omit its detailed proof.
\end{proof}

\vspace{0.2cm}

Next, we introduce two Lyapunov type functionals ${\mathcal L}_{\pm}(t,z)$:
\[ \label{C-3}
{\mathcal L}_{\pm}(t,z)  := ||V(t,z)||  \pm \frac{1}{\sqrt{2}} \Psi(\sqrt{2} ||X(t,z)||, z), \quad \Psi(x,z) := \int_0^x \psi(\eta,z) d\eta.
\]
Then, we have the following stability estimates in the following lemma.
\begin{lemma} \label{L3.2}
Let $\{ (x_i(t,z), v_i(t,z) ) \}_{i=1}^{N}$ be a global smooth solution to \eqref{main} - \eqref{comm} with zero initial total momentum. Then, the functionals ${\mathcal L}_{\pm}$ satisfy stability estimates: for $t \in (0, \infty)$ and $z \in \Omega$,
\begin{eqnarray*}
&& (i)~{\mathcal L}_{\pm}(t,z) \leq    {\mathcal L}_{\pm}(0,z).    \cr
&& (ii)~||V(t,z)|| + \frac{1}{\sqrt{2}} \Big| \int_{\sqrt{2}||X^0(z)||}^{\sqrt{2}||X(t,z)||} \psi(\eta, z) d\eta \Big| \leq ||V^0(z)||
.\end{eqnarray*}
\end{lemma}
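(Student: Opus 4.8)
The plan is to use $\mathcal{L}_{\pm}$ as a Lyapunov functional for the system of dissipative differential inequalities \eqref{C-2}, and then to unfold the definition of $\Psi$ to extract the signed estimate in (ii). First I would compute the time derivative of $\mathcal{L}_{\pm}(t,z)$ for a.e.\ $t$: since $\partial_x \Psi(x,z) = \psi(x,z)$, the chain rule gives
\[
\partial_t \mathcal{L}_{\pm}(t,z) = \partial_t \|V(t,z)\| \pm \psi\big(\sqrt{2}\,\|X(t,z)\|, z\big)\, \partial_t \|X(t,z)\|.
\]
Then I would insert the two inequalities of \eqref{C-2}: the second gives $\partial_t \|V\| \le -\psi(\sqrt{2}\|X\|,z)\,\|V\|$, while the first gives $\pm\, \partial_t \|X\| \le \big|\partial_t \|X\|\big| \le \|V\|$; since $\psi > 0$ by \eqref{comm}, multiplying the latter by $\psi(\sqrt{2}\|X\|,z)$ preserves the inequality. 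Adding the two contributions yields $\partial_t \mathcal{L}_{\pm}(t,z) \le 0$ for a.e.\ $t$.

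Because $\{(x_i,v_i)\}$ is a global smooth solution, $\|X(\cdot,z)\|$ and $\|V(\cdot,z)\|$ are locally Lipschitz in $t$, hence so is $\mathcal{L}_{\pm}(\cdot,z)$; integrating the a.e.\ bound via the fundamental theorem of calculus for absolutely continuous functions (the same device used in the Gronwall-type lemma of Appendix A) gives $\mathcal{L}_{\pm}(t,z) \le \mathcal{L}_{\pm}(0,z)$, which is assertion (i). For (ii), I would rewrite, using $\Psi(\sqrt{2}\|X(t,z)\|,z) - \Psi(\sqrt{2}\|X^0(z)\|,z) = \int_{\sqrt{2}\|X^0(z)\|}^{\sqrt{2}\|X(t,z)\|} \psi(\eta,z)\, d\eta$, the two scalar estimates $\mathcal{L}_{+}(t,z)\le \mathcal{L}_{+}(0,z)$ and $\mathcal{L}_{-}(t,z)\le \mathcal{L}_{-}(0,z)$ as
\[
\|V(t,z)\| \pm \frac{1}{\sqrt{2}} \int_{\sqrt{2}\|X^0(z)\|}^{\sqrt{2}\|X(t,z)\|} \psi(\eta,z)\, d\eta \le \|V^0(z)\|.
\]
Keeping whichever of the two signs makes the integral term nonnegative --- equivalently, replacing that term by its absolute value --- produces exactly the bound in (ii).

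The one genuinely delicate point is the very first step: the SDDI \eqref{C-2} is valid only for almost every $t$, so $\mathcal{L}_{\pm}(\cdot,z)$ cannot be differentiated pointwise and one must argue through absolute continuity rather than a naive pointwise ODE comparison. Everything after that --- the chain rule for $\partial_t \Psi\big(\sqrt{2}\|X(t,z)\|,z\big)$, which is legitimate because $\psi\big(\sqrt{2}\|X(t,z)\|,z\big)$ is bounded by $\psi_M$ and measurable in $t$, and the algebra turning (i) into (ii) --- is routine. Note that no pathwise smallness or integrability hypothesis on $\psi$ is needed here: positivity and boundedness from \eqref{comm} suffice.
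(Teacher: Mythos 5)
Your proof is correct and follows exactly the standard Lyapunov-functional argument that the paper invokes by reference (it defers to Lemma 3.2 of the deterministic case in \cite{A-C-H-L} and omits the details); differentiating ${\mathcal L}_{\pm}$, inserting the SDDI from Lemma \ref{L3.1}, and then choosing the sign that makes the integral term nonnegative is precisely that argument. Your remark about arguing through absolute continuity rather than pointwise differentiation is a correct and welcome refinement of a point the cited proof glosses over.
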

\begin{proof}
The proof is essentially the same as the deterministic case in Lemma 3.2 of \cite{A-C-H-L}. Hence we omit its proof here.
\end{proof}
As a direct application of Lemma \ref{L3.2}, we obtain the emergence of mono-cluster flocking estimate as follows.
\begin{theorem} \label{T3.1}
\emph{(Pathwise flocking estimate)}
For $z \in \Omega$, suppose that the initial data and $\psi$ satisfy the following relation:
\begin{equation} \label{C-4}
||V^0(z)||  < \frac{1}{\sqrt{2}} \int_{\sqrt{2}||X^0(z)||}^{\infty} \psi(s) ds,  \quad m_1(0,z) = \sum_{i=1}^{N} v_i^0 = 0,~~ z \in \Omega,
\end{equation}
and let $\{ (x_i(t,z), v_i(t,z) ) \}_{i=1}^{N}$ be a global smooth solution to \eqref{main} - \eqref{comm}. Then, there exists a positive random variable $x_M(z)$ such  that
\begin{eqnarray*}
&& (i)~\sup_{0 \leq t < \infty} ||X(t,z)||  \leq x_M(z) < \infty. \cr
&& (ii)~||V(t,z)|| \leq ||V^0(z)|| e^{-\psi(\sqrt{2}x_M(z), z) t}, \quad t \geq 0,~~z \in \Omega,
\end{eqnarray*}
where $x_M(z)$ is defined to be the unique value satisfying the following implicit relation:
\begin{equation} \label{C-5}
||V^0(z)|| =  \frac{1}{\sqrt{2}} \int_{\sqrt{2}||X^0(z)||}^{\sqrt{2}x_M(z)} \psi(s,z) ds.
\end{equation}
\end{theorem}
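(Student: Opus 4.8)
The plan is to combine the stability estimate of Lemma~\ref{L3.2} with the dissipative differential inequality \eqref{C-2}, using only the positivity and monotonicity of $\psi$ recorded in \eqref{comm}; no new estimate is needed beyond Lemmas~\ref{L3.1}--\ref{L3.2}. First I would check that \eqref{C-5} determines $x_M(z)$ unambiguously. For fixed $z\in\Omega$ set
\[
  G(r):=\int_{\sqrt2\,||X^0(z)||}^{\sqrt2\,r}\psi(\eta,z)\,d\eta,\qquad r\geq ||X^0(z)||.
\]
Since $\psi(\cdot,z)>0$, the function $G$ is continuous and strictly increasing, with $G(||X^0(z)||)=0$ and $G(r)\to\int_{\sqrt2\,||X^0(z)||}^{\infty}\psi(\eta,z)\,d\eta>\sqrt2\,||V^0(z)||$ as $r\to\infty$, the last inequality being precisely the flocking condition \eqref{C-4}. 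Hence the intermediate value theorem produces a unique $x_M(z)\in[\,||X^0(z)||,\infty)$ with $G(x_M(z))=\sqrt2\,||V^0(z)||$, which is \eqref{C-5}. The degenerate case $||V^0(z)||=0$ is trivial, so one may assume $x_M(z)>||X^0(z)||$.

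For (i), fix $t\geq 0$ and $z\in\Omega$. If $||X(t,z)||\leq||X^0(z)||$ then $||X(t,z)||\leq x_M(z)$ trivially. Otherwise Lemma~\ref{L3.2}(ii) gives
\[
  G(||X(t,z)||)=\Big|\int_{\sqrt2\,||X^0(z)||}^{\sqrt2\,||X(t,z)||}\psi(\eta,z)\,d\eta\Big|\leq \sqrt2\,||V^0(z)||=G(x_M(z)),
\]
and strict monotonicity of $G$ forces $||X(t,z)||\leq x_M(z)$. Taking the supremum over $t\geq 0$ yields (i), and since $||x_j(t,z)-x_i(t,z)||\leq 2||X(t,z)||\leq 2x_M(z)$ this also establishes the "formation of group" condition in Definition~\ref{D2.1}.

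For (ii), I would feed the bound $\sqrt2\,||X(t,z)||\leq\sqrt2\,x_M(z)$ back into the kernel: by the monotonicity of ${\tilde \psi}(|\cdot|,z)$ in \eqref{comm} one gets $\psi(\sqrt2\,||X(t,z)||,z)\geq\psi(\sqrt2\,x_M(z),z)$ for all $t\geq 0$, so the second inequality of \eqref{C-2} becomes
\[
  \partial_t ||V(t,z)||\leq -\psi(\sqrt2\,x_M(z),z)\,||V(t,z)||\qquad \text{for a.e. } t>0 .
\]
Gronwall's inequality then gives $||V(t,z)||\leq||V^0(z)||\,e^{-\psi(\sqrt2\,x_M(z),z)\,t}$, which is (ii); and since $\psi(\sqrt2\,x_M(z),z)>0$ by the positivity in \eqref{comm}, $||V(t,z)||\to 0$ exponentially, whence $\max_{i,j}||v_j(t,z)-v_i(t,z)||\leq 2||V(t,z)||\to 0$, giving the "velocity alignment" condition in Definition~\ref{D2.1}.

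The only genuinely delicate point is the step proving (i): one must ensure that the single estimate in Lemma~\ref{L3.2}(ii) controls $||X(t,z)||$ at \emph{all} times simultaneously rather than merely up to a first crossing of $x_M(z)$, and that the absolute value there is handled correctly when $||X(t,z)||$ dips below $||X^0(z)||$. Phrasing the bound through the monotone function $G$ disposes of both issues at once and avoids any continuation or bootstrap argument; everything else is a direct application of Lemmas~\ref{L3.1}--\ref{L3.2} together with the structural hypotheses \eqref{comm}.
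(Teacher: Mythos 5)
Your proposal is correct and follows essentially the same route as the paper: both parts rest on Lemma~\ref{L3.2}(ii) together with the positivity/monotonicity of $\psi$ to pin down $x_M(z)$ and bound $||X(t,z)||$, and then feed that bound into $\eqref{C-2}_2$ and apply Gronwall. The only cosmetic difference is that you phrase the uniform bound (i) directly through the strictly increasing primitive $G$, whereas the paper runs the equivalent argument by contradiction at a putative crossing time $t_*$.
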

\begin{proof} The proof can be split into two steps. \newline

\noindent $\bullet$ Step A (uniform bound of $||X(\cdot, z)||$): Note that the positivity of $\psi$ implies that the unique determination of $x_M(z)$ via the relation \eqref{C-5}. For such $x_M(z)$, we claim:
\begin{equation} \label{C-6}
  \sup_{0 \leq t < \infty} ||X(t,z)|| \leq x_M(z).
 \end{equation}
Suppose not, i.e., there exists $t_* \in (0, \infty)$ such that
\[   ||X(t_*, z)|| > x_M(z). \]
On the other hand, it follows from (ii) in Lemma \ref{L3.2} that
\[   \frac{1}{\sqrt{2}} \Big| \int_{\sqrt{2}||X^0(z)||}^{\sqrt{2}||X(t_*,z)||} \psi(\eta, z) d\eta \Big| \leq ||V^0(z)||.    \]
This and the relation \eqref{C-5} imply
\[  ||V^0(z)|| = \frac{1}{\sqrt{2}} \int_{\sqrt{2}||X^0(z)||}^{\sqrt{2}x_M(z)} \psi(s, z) ds <  \frac{1}{\sqrt{2}} \int_{\sqrt{2}||X^0(z)||}^{\sqrt{2}X(t_*, z)} \psi(s,z) ds  \leq ||V^0(z)||, \]
which gives a contradiction. Hence we have the uniform boundedness \eqref{C-6} for $||X(\cdot, z)||$.  \newline

\noindent $\bullet$ Step B (exponential decay of $||V(\cdot, z)||$): We use \eqref{comm}, $\eqref{C-2}_2$ and \eqref{C-6} to obtain
\begin{eqnarray*}
\partial_t ||V(t,z)|| &\leq& -\psi (\sqrt{2} ||X(t,z)||, z) ||V(t,z)|| \cr
&\leq& -\psi (\sqrt{2}x_M(z), z) ||V(t,z)||, \quad \mbox{a.e.,}~t \in (0, \infty).
\end{eqnarray*}
Then, Gronwall's lemma yields the desired exponential decay estimate of $||V(\cdot,z)||$.
\end{proof}
\begin{remark} For a given $\psi$, $x_M(z)$ can be found explicitly or implicitly via the relation (\ref{C-5}). Thus, we can write
\[ x_M(z) = x_M( ||X^0(z), ||V^0(z)||). \]
\end{remark}

\vspace{0.5cm}

As a direct corollary of Theorem \ref{T3.1}, we have estimates for the mean of the modulus of $X(t,z)$ and $V(t,z)$, when $\psi$ has a positive lower bound.

\begin{corollary} \label{C3.1} Suppose that $\psi = \psi(x,z)$, and the strength and the initial data satisfy
\begin{equation} \label{NNE-1}
  \inf_{(x, z) \in \bbr \times \Omega} \psi(x,z) \geq \psi_0 > 0, \quad m_1(0) = 0,
\end{equation}
where $\psi_0$ is a positive constant. Then, for a solution $\{ (x_i(t,z), v_i(t,z) ) \}_{i=1}^{N}$ to \eqref{main} with random initial data
$\{ (x^0_i(z), v^0_i(z) ) \}_{i=1}^{N}$, we have
\[ \bbe[ ||V(t)|| ]\leq  \bbe[ ||V^0||] e^{-\psi_0 t}, \qquad  \bbe [ ||X(t)||] \leq \bbe[x_M], \quad t \geq 0.  \]
\end{corollary}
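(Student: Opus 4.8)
The plan is to derive Corollary \ref{C3.1} by combining the pathwise estimates of Theorem \ref{T3.1} with the lower bound \eqref{NNE-1} on $\psi$ and then integrating against the probability measure $\pi(z)dz$. First I would observe that under the hypothesis $\inf_{(x,z)} \psi(x,z) \geq \psi_0 > 0$, the pathwise flocking condition \eqref{C-4} is automatically satisfied for every $z \in \Omega$: since $\int_{\sqrt 2 \|X^0(z)\|}^\infty \psi(s,z)\,ds = +\infty$, the strict inequality $\|V^0(z)\| < \frac{1}{\sqrt 2}\int_{\sqrt 2\|X^0(z)\|}^\infty \psi(s)\,ds$ holds trivially (here one should note the mild abuse of notation in \eqref{C-4} where $\psi(s)$ is written for $\psi(s,z)$). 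Hence Theorem \ref{T3.1} applies for each fixed $z$, giving the unique $x_M(z)$ from \eqref{C-5} together with the two pathwise bounds $\sup_{t\geq 0}\|X(t,z)\| \leq x_M(z)$ and $\|V(t,z)\| \leq \|V^0(z)\| e^{-\psi(\sqrt 2 x_M(z),z)t}$.

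Next I would sharpen the exponential decay estimate using \eqref{NNE-1}: since $\psi(\sqrt 2 x_M(z), z) \geq \psi_0$, we have $e^{-\psi(\sqrt 2 x_M(z),z)t} \leq e^{-\psi_0 t}$ for all $t \geq 0$, so that the pathwise bound becomes the $z$-uniform decay
\[
\|V(t,z)\| \leq \|V^0(z)\|\, e^{-\psi_0 t}, \qquad t \geq 0,~~z \in \Omega.
\]
Similarly, from part (i) of Theorem \ref{T3.1} we have $\|X(t,z)\| \leq x_M(z)$ for all $t \geq 0$ and all $z$. Both inequalities hold pointwise in $z$, so multiplying by $\pi(z) \geq 0$ and integrating over $\Omega$ preserves them; invoking the definition $\bbe[\varphi] = \int_\Omega \varphi(z)\pi(z)\,dz$ and pulling the $t$-independent factor $e^{-\psi_0 t}$ out of the integral yields
\[
\bbe[\|V(t)\|] \leq \bbe[\|V^0\|]\, e^{-\psi_0 t}, \qquad \bbe[\|X(t)\|] \leq \bbe[x_M], \qquad t \geq 0,
\]
which is exactly the claimed statement.

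There is essentially no serious obstacle here: the corollary is a routine "integrate the pathwise estimate" argument, and the only points requiring a word of care are (a) checking that the flocking hypothesis \eqref{C-4} is vacuously met under the uniform lower bound so that Theorem \ref{T3.1} is genuinely applicable, and (b) a brief measurability/integrability remark ensuring $z \mapsto \|V^0(z)\|$ and $z \mapsto x_M(z)$ are measurable (the latter follows from the implicit-function characterization \eqref{C-5} and continuity of $\psi$) and that $\bbe[\|V^0\|]$, $\bbe[x_M]$ are finite, which one tacitly assumes as part of the admissible-data class. If one wanted $\bbe[x_M] < \infty$ to be more than an assumption, the mildly delicate step would be estimating $x_M(z)$ from above in terms of $\|X^0(z)\|$ and $\|V^0(z)\|$ using \eqref{C-5} and $\psi \geq \psi_0$, giving $x_M(z) \leq \|X^0(z)\| + \frac{1}{\sqrt 2 \psi_0}\|V^0(z)\|$, after which finiteness of $\bbe[x_M]$ follows from integrability of the initial data; but as stated the corollary simply carries $\bbe[x_M]$ on the right-hand side, so this refinement is optional.
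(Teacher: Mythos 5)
Your proof is correct and follows essentially the same route as the paper's: observe that the divergence of $\int_{\sqrt{2}\|X^0(z)\|}^{\infty}\psi(s,z)\,ds$ makes the flocking hypothesis \eqref{C-4} hold trivially, invoke Theorem \ref{T3.1} pathwise, replace $\psi(\sqrt{2}x_M(z),z)$ by its lower bound $\psi_0$ in the exponent, and integrate the resulting pointwise-in-$z$ inequalities against $\pi(z)\,dz$. (One minor quibble in your optional aside: the relation \eqref{C-5} together with $\psi\geq\psi_0$ gives $x_M(z)\leq\|X^0(z)\|+\|V^0(z)\|/\psi_0$ rather than with the extra factor $1/\sqrt{2}$, but that refinement is not needed for the corollary as stated.)
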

\begin{proof} Note that the relation $\eqref{NNE-1}$ yields
\[ \int_{\sqrt{2}||X^0(z)||}^{\infty} \psi(s,z) ds \geq  \int_{\sqrt{2}||X^0(z)||}^{\infty} \psi_0 ds = \infty. \]
Then, the relation $\eqref{C-4}_1$ holds trivially for any initial data with zero total momentum. Therefore, we have a pathwise mono-cluster flocking:
\begin{equation} \label{C-7}
 ||X(t,z)||  \leq x_M(z) < \infty, \quad ||V(t,z)|| \leq ||V^0(z)|| e^{-\psi(\sqrt{2}x_M(z), z) t}, \quad t \geq 0.
 \end{equation}

\vspace{0.2cm}

\noindent $\bullet$ (Estimate of $\bbe  ||V(t,z)||$): We use the positive lower bound \eqref{NNE-1} for $\psi$ to get
\[ ||V(t,z)|| \leq ||V^0(z)|| e^{-\psi(\sqrt{2}x_M(z), z) t} \leq  ||V^0(z)|| e^{-\psi_0 t}. \]
We multiply $\pi(z)$ to the above relation and integrate it over $\Omega$ to get
\[ \bbe[||V(t)||]  \leq \bbe[ ||V^0||] e^{-\psi_0 t}. \]

\vspace{0.2cm}

\noindent $\bullet$ (Estimate of $\bbe  ||X(t,z)||$): We multiply $\pi(z)$ to $\eqref{C-7}_1$ and integrate it over $\Omega$ to obtain
\[ \bbe[||X(t)||]  \leq \bbe [x_M]. \]
\end{proof}
\begin{remark} Note that the results of Corollary \ref{C3.1} imply
\[   \bbe[ ||v_i(t)||]\leq  \bbe[ ||V^0||] e^{-\psi_0 t} \quad \mbox{and} \quad \bbe [ ||x_i(t)|| ] \leq \bbe[x_M] \quad 1 \leq i \leq N,~t \geq 0.  \]
\end{remark}

\vspace{0.5cm}

\subsection{Uniform $\ell_2$-stability} \label{sec:3.2} In this subsection, we study the uniform $\ell_2$-stability of the random C-S model with respect to initial data along the sample path. First, we recall definition of the uniform $\ell_2$-stability as follows.
\begin{definition} \label{D3.1}
Let $\{ (x_i(t,z), v_i(t,z)) \}_{i=1}^{N}$ and $\{ ({\tilde x}_i(t,z), {\tilde v}_i(t,z)) \}_{i=1}^{N}$ be two smooth solutions to the random C-S model \eqref{main} - \eqref{comm} satisfying zero total momentum and \eqref{C-4}. The random C-S model \eqref{main} is  pathwise uniformly $\ell_2$-stable with respect to initial data, if there exists a positive random variable $G(z)$ independent of $t$ such that
\begin{align}
\begin{aligned} \label{C-8}
&  \sup_{0 \leq t < \infty} \Big(  ||X(t,z) - {\tilde X}(t,z)|| +  || V(t,z) - {\tilde V}(t,z)|| \Big) \\
& \hspace{2cm}  \leq G(z) \Big( || X^0(z) - {\tilde X}^0(z)|| +  || V^0(z) - {\tilde V}^0(z)|| \Big), \quad z \in \Omega.
\end{aligned}
\end{align}
\end{definition}

\vspace{0.2cm}

Before we present the uniform stability estimate along the sample path, we introduce several handy notation in the sequel. Let $\{ (x_i(t,z), v_i(t,z)) \}_{i=1}^{N}$ and $\{ ({\tilde x}_i(t,z), {\tilde v}_i(t,z)) \}_{i=1}^{N}$ be two global solutions to the C-S model \eqref{main} -\eqref{comm}, respectively. Then, for $i, j = 1, \cdots, N,~(t,z) \in \bbr_+ \times \Omega$, we set
\begin{align*}
\begin{aligned}
 x_{ij}(t,z) &:= x_i(t,z) - x_j(t,z), \quad  {\tilde x}_{ij}(t,z) := {\tilde x}_i(t,z) - {\tilde x}_j(t,z),  \\
 v_{ij}(t,z) &:= v_i(t,z) - v_j(t,z), \quad  {\tilde v}_{ij}(t,z) := {\tilde v}_i(t,z) - {\tilde v}_j(t,z),  \\
 \Delta^i_x(t,z) &:= x_i(t,z) - {\tilde x}_i(t,z), \quad \Delta^i_v(t,z) := v_i(t,z) - {\tilde v}_i(t,z),  \\
 \Delta_x(t,z) &:= X(t,z) - {\tilde X}(t,z), \quad  \Delta_v(t,z) := V(t,z) - {\tilde V}(t,z).
\end{aligned}
\end{align*}
Note that $\Delta^i_x(t,z)$ and $\Delta^i_v(t,z)$ satisfy
\begin{align}
\begin{aligned} \label{C-7b}
&\partial_t  \Delta^i_x(t,z) =  \Delta^i_v(t,z), \quad  t>0, \quad 1 \leq i \leq N, \\
&\partial_t \Delta^i_v(t,z) =\frac{1}{N}\sum_{j=1}^N\psi(x_{ji}(t,z), z)\Big(  \Delta^j_v(t,z) -  \Delta^i_v(t,z)  \Big)\\
& \hspace{1.7cm} +  \frac{1}{N}\sum_{j=1}^N \Big(  \psi(x_{ji}(t,z), z) -  \psi({\tilde x}_{ji}(t,z), z) \Big) {\tilde v}_{ji}(t,z).
\end{aligned}
\end{align}
Next, we derive coupled differential inequalities for scalar functionals  $\|\Delta_x\|$ and $\|\Delta_v \|$.
\begin{lemma} \label{L3.3}
Let $\{ (x_i(t,z), v_i(t,z)) \}_{i=1}^{N}$ and $\{ ({\tilde x}_i(t,z), {\tilde v}_i(t,z)) \}_{i=1}^{N}$ be smooth solutions to the C-S model \eqref{main} - \eqref{comm} satisfying zero total momentum and \eqref{C-4}. Then, we have
\begin{align}
\begin{aligned} \label{C-8}
& \Big | \partial_t \| \Delta_x(t,z) \| \Big| \leq \| \Delta_v(t,z) \|, \quad \mbox{a.e.,}~~t > 0, \quad z \in \Omega, \\
&  \partial_t \| \Delta_v(t,z) \| \leq  -\psi_m(z) \|\Delta_v(t,z) \| +  2 \sqrt{2} ||\psi(\cdot, z)||_{Lip}   ||V^0(z)|| \cdot ||\Delta_x(t,z) || e^{-\psi_m(z) t},
\end{aligned}
\end{align}
where the random variable $\psi_m$ is defined by the following relation:
\begin{equation} \label{NNE-2}
\psi_m(z) := \min\{ \psi(\sqrt{2}{\tilde x}_M(z), z),~\psi(\sqrt{2}x_M(z), z) \}.
\end{equation}
\end{lemma}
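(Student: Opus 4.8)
Since $\partial_t \Delta^i_x = \Delta^i_v$ by the first equation of \eqref{C-7b}, the first inequality follows exactly as in Lemma \ref{L3.1}: wherever $t\mapsto\|\Delta_x(t,z)\|$ is differentiable one has $\|\Delta_x\|\,\partial_t\|\Delta_x\| = \tfrac12\partial_t\|\Delta_x\|^2 = \langle \Delta_x,\Delta_v\rangle \le \|\Delta_x\|\,\|\Delta_v\|$, so $|\partial_t\|\Delta_x\||\le\|\Delta_v\|$; at the (measure-zero) zero set of $\|\Delta_x\|$ the a.e.\ derivative vanishes and the bound is trivial. Hence I would concentrate on the second inequality.

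For the second inequality, I would differentiate the square of $\|\Delta_v\|$ and use the second equation of \eqref{C-7b} to split the result into an alignment term and a communication-discrepancy term:
\[ \tfrac12\partial_t\|\Delta_v\|^2 = \underbrace{\frac1N\sum_{i,j=1}^N\psi(x_{ji},z)\,\langle\Delta^i_v,\Delta^j_v-\Delta^i_v\rangle}_{=:\mathcal I_1} + \underbrace{\frac1N\sum_{i,j=1}^N\big(\psi(x_{ji},z)-\psi({\tilde x}_{ji},z)\big)\langle\Delta^i_v,{\tilde v}_{ji}\rangle}_{=:\mathcal I_2}. \]
For $\mathcal I_1$, the index exchange $(i,j)\leftrightarrow(j,i)$ together with $\psi(-x,z)=\psi(x,z)$ from \eqref{comm} symmetrizes it into $\mathcal I_1 = -\tfrac1{2N}\sum_{i,j}\psi(x_{ji},z)\|\Delta^j_v-\Delta^i_v\|^2$. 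Both ensembles have zero total momentum, so by Lemma \ref{L2.1}(i) we get $\sum_i\Delta^i_v(t,z)=0$, whence $\tfrac1{2N}\sum_{i,j}\|\Delta^j_v-\Delta^i_v\|^2=\sum_i\|\Delta_v^i\|^2=\|\Delta_v\|^2$. Finally, the pathwise flocking bound of Theorem \ref{T3.1} gives $\|x_{ji}(t,z)\|\le\sqrt2\,\|X(t,z)\|\le\sqrt2\,x_M(z)$, so the monotonicity of ${\tilde\psi}$ in \eqref{comm} yields $\psi(x_{ji},z)\ge\psi(\sqrt2 x_M(z),z)\ge\psi_m(z)$; hence $\mathcal I_1\le-\psi_m(z)\|\Delta_v\|^2$.

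For $\mathcal I_2$, the Lipschitz bound in \eqref{comm} gives $|\psi(x_{ji},z)-\psi({\tilde x}_{ji},z)|\le\|\psi(\cdot,z)\|_{Lip}\|\Delta^i_x-\Delta^j_x\|$, and Theorem \ref{T3.1} applied to the tilde-ensemble gives $\|{\tilde v}_{ji}(t,z)\|\le\sqrt2\,\|{\tilde V}(t,z)\|\le\sqrt2\,\|{\tilde V}^0(z)\|e^{-\psi(\sqrt2{\tilde x}_M(z),z)t}\le\sqrt2\,\|{\tilde V}^0(z)\|e^{-\psi_m(z)t}$, the last step again using \eqref{NNE-2}. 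It then remains to bound the combinatorial factor $\tfrac1N\sum_{i,j}\|\Delta^i_x-\Delta^j_x\|\,\|\Delta^i_v\|$; by the triangle inequality followed by Cauchy--Schwarz this is $\le\sum_i\|\Delta^i_x\|\|\Delta^i_v\|+\tfrac1N\big(\sum_j\|\Delta^j_x\|\big)\big(\sum_i\|\Delta^i_v\|\big)\le2\|\Delta_x\|\|\Delta_v\|$. Collecting the two estimates gives
\[ \tfrac12\partial_t\|\Delta_v\|^2 \le -\psi_m(z)\|\Delta_v\|^2 + 2\sqrt2\,\|\psi(\cdot,z)\|_{Lip}\|{\tilde V}^0(z)\|\,\|\Delta_x\|\|\Delta_v\|\,e^{-\psi_m(z)t}, \]
and dividing by $\|\Delta_v\|$ (with the usual convention on its zero set, where the claimed inequality is trivial since the right-hand side is nonnegative) yields the second inequality.

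I do not expect a serious obstacle: this is the perturbative counterpart of Lemmas \ref{L3.1}--\ref{L3.2}. The only points requiring care are (i) the a.e.\ differentiability of $\|\Delta_x\|$ and $\|\Delta_v\|$ and the treatment of their zero sets, (ii) ensuring that the lower bound used for $\psi(x_{ji},z)$ in $\mathcal I_1$ and the decay rate $\psi(\sqrt2{\tilde x}_M(z),z)$ of $\|{\tilde V}\|$ used in $\mathcal I_2$ are \emph{both} bounded below by the single quantity $\psi_m(z)$ --- which is exactly why $\psi_m$ is defined as a minimum over the two ensembles in \eqref{NNE-2} --- and (iii) the bookkeeping in the triple-product estimate for $\mathcal I_2$. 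I also note that the velocity factor naturally entering $\mathcal I_2$ is $\|{\tilde V}^0(z)\|$; to match the statement verbatim one symmetrizes the roles of the two solutions or simply replaces this factor by $\max\{\|V^0(z)\|,\|{\tilde V}^0(z)\|\}$.
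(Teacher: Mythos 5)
Your proof is correct and follows essentially the same route as the paper: the same splitting of $\tfrac12\partial_t\|\Delta_v\|^2$ into an alignment term (symmetrized via the index exchange and bounded below by $\psi(\sqrt2 x_M(z),z)$ using the flocking bound and zero total momentum) and a communication-discrepancy term (Lipschitz bound on $\psi$ plus the exponential decay of $\|\tilde V\|$ from Theorem \ref{T3.1}), with identical constants. Your closing remark about the factor $\|{\tilde V}^0(z)\|$ versus $\|V^0(z)\|$ is well taken --- the paper's own derivation also produces $\|{\tilde V}^0(z)\|$ in \eqref{C-11} before silently writing $\|V^0(z)\|$ in the final line, so replacing it by the maximum over the two ensembles (or symmetrizing) is the right fix.
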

\begin{proof}
(i) We take an inner product  $\eqref{C-7b}_1$ with $2 \Delta^i_x(t,z)$, sum it over all $i$ to get
\[ \Big| \partial_t ||\Delta^i_x(t,z)||^2  \Big|=  2 |\Delta^i_x(t,z) \cdot \Delta^i_v(t,z) | \leq 2 \|\Delta^i_x(t,z) \| \cdot \| \Delta^i_v(t,z) \|. \]
This yields
\begin{align*}
\begin{aligned}
\Big| \partial_t  ||\Delta_x(t,z)||^2  \Big| &\leq  \sum_{i=1}^{N} \Big| \partial_t  ||\Delta^i_x(t,z)||^2  \Big| \\
&\leq 2  \sum_{i=1}^{N} \|\Delta^i_x(t,z) \| \cdot \| \Delta^i_v(t,z) \| \leq 2 ||\Delta_x(t,z)|| \cdot  ||\Delta_v(t,z)||.
\end{aligned}
\end{align*}
This yields the first differential inequality $\eqref{C-8}_1$.

\vspace{0.2cm}

\noindent (ii) Similarly, we have
\begin{align}
\begin{aligned} \label{C-9}
&\partial_t  \sum_{i=1}^{N} \|\Delta^i_v(t,z)\|^2 \\
& \hspace{1cm} =\frac{2}{N}\sum_{i,j=1}^N\psi(x_{ji}(t,z), z) \Delta_v^i(t,z) \cdot \Big(  \Delta^j_v(t,z) -  \Delta^i_v(t,z)  \Big)\\
&  \hspace{1.3cm}+  \frac{2}{N}\sum_{i,j=1}^N \Big(  \psi(x_{ji}(t,z), z) -  \psi({\tilde x}_{ji}(t,z), z) \Big) \Delta_v^i(t,z) \cdot {\tilde v}_{ji}(t,z) \\
&  \hspace{1cm} = -\frac{1}{N}\sum_{i,j=1}^N\psi(x_{ji}(t,z), z) ||\Delta^j_v(t,z) -  \Delta^i_v(t,z)||^2 \\
& \hspace{1.3cm}+  \frac{2}{N}\sum_{i,j=1}^N \Big(  \psi(x_{ji}(t,z), z) -  \psi({\tilde x}_{ji}(t,z), z) \Big) \Delta_v^i (t,z)\cdot {\tilde v}_{ji}(t,z) \\
& \hspace{1cm}=: {\mathcal I}_{11} + {\mathcal I}_{12}.
\end{aligned}
\end{align}

\vspace{0.2cm}

\noindent $\bullet$~Case A (Estimate of ${\mathcal I}_{11}$): We use the upper bound for $x_{ji}(t,z)$:
\[ \sup_{0 \leq t < \infty} ||x_{ji}(t,z)|| \leq \sqrt{2} x_M(z), \]
and zero total momentum to obtain
\begin{equation} \label{C-10}
{\mathcal I}_{11} \leq -2 \psi(\sqrt{2} x_M(z), z) \|\Delta_v(t,z) \|^2.
\end{equation}

\vspace{0.2cm}

\noindent $\bullet$~Case B (Estimate on ${\mathcal I}_{12}$): For each $z \in \Omega$, it follows from the Lipschitz continuity of $\psi$ and Theorem \ref{T3.1} that we have
\begin{align*}
\begin{aligned}
&|\psi(x_{ji}(t,z), z) -  \psi({\tilde x}_{ji}(t,z), z) | \\
& \hspace{1cm} \leq ||\psi(z)||_{Lip} \|x_{ji}(t,z) - {\tilde x}_{ji}(t,z) \| \leq  ||\psi(z)||_{Lip}( \| \Delta_x^i(t,z) \| +  \| \Delta_x^j(t,z) \| ) \\
& \| {\tilde v}_{ji}(t,z)\| \leq \sqrt{2} ||{\tilde V}(t,z)|| \leq \sqrt{2} ||{\tilde V}^0(z)|| e^{-\psi(\sqrt{2}{\tilde x}_M(z), z) t}.
\end{aligned}
\end{align*}
Here $ ||\psi(z)||_{Lip}$ is the Lipschtz constant of $\psi$. This yields
\begin{align}
\begin{aligned}  \label{C-11}
|{\mathcal I}_{12}| &\leq \frac{2\sqrt{2}}{N} ||\psi(z)||_{Lip}   ||{\tilde V}^0(z)|| e^{-\psi(\sqrt{2} {\tilde x}_M(z), z) t} \\
&\quad \times \sum_{i,j =1}^{N} \Big( ||\Delta_x^i(t,z) || \cdot ||\Delta_v^i(t,z)|| + ||\Delta_x^j(t,z)|| \cdot ||\Delta_v^i(t,z)|| \Big) \\
&= 4 \sqrt{2} ||\psi(z)||_{Lip}   ||V^0(z)|| \cdot ||\Delta_x(t,z) || \cdot ||\Delta_v(t,z) ||  e^{-\psi(\sqrt{2} {\tilde x}_M(z), z) t}.
\end{aligned}
\end{align}
In \eqref{C-9}, we combine estimates \eqref{C-10} and \eqref{C-11} to obtain
\begin{align*}
\begin{aligned}
\partial_t  \|\Delta_v(t,z) \|^2   &\leq -2 \psi(\sqrt{2} x_M(z), z) \|\Delta_v(t,z) \|^2  \\
&\quad+  4 \sqrt{2} ||\psi(z)||_{Lip}   ||V^0(z)|| \cdot ||\Delta_x(t,z) || \cdot ||\Delta_v(t,z) ||  e^{-\psi(\sqrt{2}{\tilde x}_M(z), z) t}.
\end{aligned}
\end{align*}
This and the relation \eqref{NNE-2} yield the desired second inequality:
\begin{align*}
\begin{aligned}
\partial_t \|\Delta_v(t,z) \|  \leq -\psi_m(z) \|\Delta_v(t,z) \| +  2 \sqrt{2} ||\psi(z)||_{Lip}   ||V^0(z)|| \cdot ||\Delta_x(t,z) || e^{-\psi_m(z) t}.
\end{aligned}
\end{align*}
\end{proof}

\begin{lemma}\label{L3.4}
\emph{\cite{H-K-Z}}
Suppose that two nonnegative Lipschitz functions $\mathcal{X}$ and $\mathcal{V}$ satisfy the coupled differential inequalities:
\[
\begin{cases}
\displaystyle \Big| \frac{d{\mathcal X}}{dt} \Big| \leq {\mathcal V}, \quad \frac{d{\mathcal V}}{dt} \leq -\alpha {\mathcal V}  + \gamma e^{-\alpha t} {\mathcal X} + f,  \quad \mbox{a.e.}~~t > 0, \\
\displaystyle ({\mathcal X}(0), {\mathcal V}(0)) = ({\mathcal X}^0, {\mathcal V}^0), \quad t = 0,
\end{cases}
\]
where $\alpha$ and $\gamma$ are positive constants, and $f : {\mathbb R}_+ \cup \{0 \} \rightarrow {\mathbb R}$ is a differentiable, nonnegative, nonincreasing function decaying to zero as its argument goes to infinity and it is integrable. Then, $\mathcal{X}$ and $\mathcal{V}$ satisfy the uniform bound and decay estimates: there exists a positive constant $B_\infty(\alpha, \gamma)$ such that
\begin{align*}
\begin{aligned}
& \mathcal{X}(t)\leq   \Big(1 + \frac{2 B_\infty(\alpha, \gamma) }{\alpha} \Big) ( \mathcal{X}^0+ {\mathcal V}^0 + f(0) + ||f||_{L^1}), \quad t \geq 0, \\
& \mathcal{V}(t)\leq B_\infty (\alpha, \gamma)  ({\mathcal X}^0 + {\mathcal V}^0 + f(0) + ||f||_{L^1} )  e^{-\frac{\alpha}{2} t} +  \frac{1}{\alpha} f \Big(\frac{t}{2} \Big),
\end{aligned}
\end{align*}
where $B_\infty(\alpha, \gamma)$ is a positive constant defined by the following relation:
\[  B_\infty(\alpha, \gamma) := \max \Big \{ \frac{\gamma}{\alpha}, 1  \Big \} \Big( 1 + \frac{8 \gamma}{\alpha^2 e^2}  e^{\gamma \int_0^{\infty} s e^{-\alpha s} ds} \Big). \]
\end{lemma}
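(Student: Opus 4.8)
The plan is to prove Lemma \ref{L3.4} by a two-stage Gr\"onwall-type argument: first extract decay of $\mathcal{V}$ from the differential inequality for $\mathcal{V}$ alone (treating $\mathcal{X}$ as a bounded-but-unknown quantity), then close the loop by plugging the resulting decay back into the inequality $|\mathcal{X}'|\le\mathcal{V}$ to bound $\mathcal{X}$, and finally iterate or use a bootstrap/continuity argument to make the constant $B_\infty(\alpha,\gamma)$ explicit.

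First I would introduce the time-scaled variable $\mathcal{W}(t):=e^{\alpha t}\mathcal{V}(t)$, so that the inequality $\mathcal{V}'\le-\alpha\mathcal{V}+\gamma e^{-\alpha t}\mathcal{X}+f$ becomes $\mathcal{W}'\le\gamma\mathcal{X}+e^{\alpha t}f$. Since at this stage $\mathcal{X}$ is not yet controlled, I would run the argument under the a priori assumption that $\sup_{0\le s\le t}\mathcal{X}(s)\le M$ for some $M$ to be determined, integrate to get $\mathcal{W}(t)\le\mathcal{W}(0)+\gamma M t+\int_0^t e^{\alpha s}f(s)\,ds$, and then translate back: $\mathcal{V}(t)\le e^{-\alpha t}(\mathcal{V}^0+\gamma M t)+e^{-\alpha t}\int_0^t e^{\alpha s}f(s)\,ds$. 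The last term I would split at $t/2$: on $[0,t/2]$ bound $f$ by $f(0)$ and integrate the exponential (giving a contribution $\lesssim\frac{1}{\alpha}f(0)e^{-\alpha t/2}$ after using monotonicity carefully, or simply $\le\frac{1}{\alpha}e^{-\alpha t/2}f(0)$ up to constants), and on $[t/2,t]$ bound $f(s)\le f(t/2)$ by monotonicity and integrate $e^{\alpha s}$ to get $\le\frac{1}{\alpha}f(t/2)$. The polynomial factor $t e^{-\alpha t}$ is then absorbed into an exponential with a worse rate, say $e^{-\alpha t/2}$, at the cost of the factor $\sup_{t\ge 0} t e^{-\alpha t/2}=\frac{2}{\alpha e}$; this is exactly where the $\frac{8\gamma}{\alpha^2 e^2}$-type constants in $B_\infty$ originate (squaring appears because the same maneuver gets applied twice through the coupling).

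Next I would close the loop on $\mathcal{X}$. From $|\mathcal{X}'|\le\mathcal{V}$ I get $\mathcal{X}(t)\le\mathcal{X}^0+\int_0^\infty\mathcal{V}(s)\,ds$, and integrating the bound for $\mathcal{V}$ just derived yields $\int_0^\infty\mathcal{V}\lesssim\frac{1}{\alpha}\mathcal{V}^0+\frac{\gamma M}{\alpha^2}+\frac{1}{\alpha}(f(0)+\|f\|_{L^1})$ (the middle term coming from $\int_0^\infty \gamma M t e^{-\alpha t}\,dt=\gamma M/\alpha^2$). So $\sup_{s\le t}\mathcal{X}(s)\le\mathcal{X}^0+\frac{C}{\alpha}\mathcal{V}^0+\frac{C\gamma}{\alpha^2}M+\frac{C}{\alpha}(f(0)+\|f\|_{L^1})$. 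To make this self-consistent I would choose $M$ at the outset so that the right-hand side is $\le M$ — but note $\frac{C\gamma}{\alpha^2}$ need not be $<1$, so a direct fixed point fails; instead I would run a continuity/bootstrap argument: let $T^*:=\sup\{T:\sup_{[0,T]}\mathcal{X}\le M\}$, show $T^*=\infty$ by proving the bound is strict on $[0,T^*]$ for a suitably large $M$. The cleanest route is actually to iterate the $\mathcal{V}$-estimate once more: feed the improved (now genuinely decaying, $M$-independent once $M$ is fixed as a multiple of the data) bound for $\mathcal{X}$ back into the $\mathcal{W}$-inequality, which replaces the linear-growth term $\gamma M t$ by $\gamma\int_0^t\mathcal{X}(s)\,ds$ with $\mathcal{X}$ itself controlled by a convergent integral of $\mathcal{V}$; this is where the Gr\"onwall factor $e^{\gamma\int_0^\infty s e^{-\alpha s}\,ds}$ enters, since the iterated kernel is $se^{-\alpha s}$.

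The main obstacle is the coupled, non-autonomous structure: neither inequality decouples cleanly because the feedback coefficient in the $\mathcal{V}$-equation is $\gamma e^{-\alpha t}$, time-dependent and \emph{not} small in an $L^1$ sense ($\int_0^\infty\gamma e^{-\alpha s}ds=\gamma/\alpha$ may exceed $1$), so a naive Gr\"onwall on the sum $\mathcal{X}+\mathcal{V}$ gives only exponential growth. Exploiting the decaying weight $e^{-\alpha t}$ to convert it into an integrable kernel $se^{-\alpha s}$ after one integration — and tracking all the constants through the split-at-$t/2$ and $\sup t e^{-\alpha t/2}$ steps so that the final constant matches the stated $B_\infty(\alpha,\gamma)$ — is the delicate bookkeeping part. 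Since the statement attributes the lemma to \cite{H-K-Z}, I expect the actual proof in the paper to simply cite that reference; my proposal above is the self-contained argument one would write if asked to reproduce it.
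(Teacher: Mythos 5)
Your proposal is essentially correct and, despite the citation to \cite{H-K-Z}, the paper does carry out this proof in full in Appendix A along exactly the lines of your final paragraph: substituting $\mathcal{X}(s)\le\mathcal{X}^0+\int_0^s\mathcal{V}(u)\,du$ into the $\mathcal{V}$-inequality produces a Volterra inequality with the integrable kernel $\gamma s e^{-\alpha s}$, Gr\"onwall then yields the factor $e^{\gamma\int_0^\infty se^{-\alpha s}ds}$ with no smallness condition on $\gamma/\alpha^2$, and the decay estimate with the $t/2$-split of the $f$-term and the bounds $\sup_t te^{-\alpha t/2}=2/(\alpha e)$, $\sup_t t^2e^{-\alpha t/2}=16/(\alpha^2e^2)$ follows as you describe. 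The only real difference is that the paper sidesteps your (self-acknowledged) dead-end bootstrap with a fixed constant $M$ by working from the outset with the running maximum $\mathcal{M}_{\mathcal{V}}(t)=\max_{\tau\in[0,t]}\mathcal{V}(\tau)$, and it keeps the weight $e^{-\alpha s}$ glued to the kernel by simply discarding $-\alpha\mathcal{V}\le 0$ rather than passing to $\mathcal{W}=e^{\alpha t}\mathcal{V}$ (in your variant one must additionally use $e^{-\alpha t}\le e^{-\alpha s}$ for $s\le t$ to recover the integrable kernel), so no continuity argument is needed.
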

\begin{proof}
We leave its proof in Appendix A.
\end{proof}

As a direct application of Lemma \ref{L3.3} and Lemma \ref{L3.4}, we obtain the uniform $\ell_2$-stability with respect to initial data pathwise.
\begin{theorem} \label{T3.2}
Let $\{ (x_i(t,z), v_i(t,z)) \}_{i=1}^{N}$ and $\{ ({\tilde x}_i(t,z), {\tilde v}_i(t,z)) \}_{i=1}^{N}$ be smooth solutions to the C-S model \eqref{main} - \eqref{comm} satisfying zero total momentum and \eqref{C-4}. Then, there exists a positive constant $M_0(z)$ only depending on $(||\psi(z)||_{Lip}, ||V^0||, x_M(z), {\tilde x}_M(z))$ such that
\begin{eqnarray*}
&&  \sup_{0 \leq t < \infty} \Big(  ||X(t,z) - {\tilde X}(t,z)|| +  || V(t,z) - {\tilde V}(t,z)|| \Big) \cr
&& \hspace{2cm}  \leq M_0(z) \Big( || X^0(z) - {\tilde X}^0(z)|| +  || V^0(z) - {\tilde V}^0(z)|| \Big), \quad z \in \Omega.
\end{eqnarray*}
\end{theorem}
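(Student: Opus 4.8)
The plan is to read off from Lemma~\ref{L3.3} coupled differential inequalities for the scalar functionals $\mathcal{X}(t) := \|\Delta_x(t,z)\|$ and $\mathcal{V}(t) := \|\Delta_v(t,z)\|$ along a fixed sample path, and then to apply the Gronwall-type Lemma~\ref{L3.4} with vanishing source term. Indeed, for fixed $z \in \Omega$, Lemma~\ref{L3.3} gives
\[ \Big| \frac{d\mathcal{X}}{dt} \Big| \leq \mathcal{V}, \qquad \frac{d\mathcal{V}}{dt} \leq -\psi_m(z)\,\mathcal{V} + \gamma(z)\, e^{-\psi_m(z)t}\,\mathcal{X}, \qquad \gamma(z) := 2\sqrt{2}\,\|\psi(\cdot,z)\|_{Lip}\,\|V^0(z)\|, \]
which is exactly the hypothesis of Lemma~\ref{L3.4} with $\alpha = \psi_m(z)$, $\gamma = \gamma(z)$ and $f \equiv 0$, so that $f(0) = \|f\|_{L^1} = 0$.

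Before invoking Lemma~\ref{L3.4} I would verify that its hypotheses really hold pathwise: $\mathcal{X}, \mathcal{V}$ are nonnegative and Lipschitz in $t$ since the solution processes are smooth; $\gamma(z)$ is finite because $\psi(\cdot,z) \in \mathrm{Lip}$ and $\|V^0(z)\| < \infty$; and, most importantly, $\alpha = \psi_m(z) > 0$. This last point is where the flocking estimate enters. Since both trajectories satisfy zero total momentum and the flocking condition \eqref{C-4}, Theorem~\ref{T3.1} produces finite random variables $x_M(z)$ and $\tilde{x}_M(z)$ with $\sup_t \|X(t,z)\| \leq x_M(z)$ and $\sup_t \|\tilde{X}(t,z)\| \leq \tilde{x}_M(z)$; the strict positivity of $\psi$ in \eqref{comm} then yields $\psi(\sqrt{2}x_M(z),z) > 0$ and $\psi(\sqrt{2}\tilde{x}_M(z),z) > 0$, hence $\psi_m(z) > 0$ by \eqref{NNE-2}.

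It then remains only to combine the two conclusions of Lemma~\ref{L3.4}, which with $f \equiv 0$ reduce to $\mathcal{X}(t) \leq \big(1 + 2B_\infty(\psi_m(z),\gamma(z))/\psi_m(z)\big)(\mathcal{X}^0 + \mathcal{V}^0)$ and $\mathcal{V}(t) \leq B_\infty(\psi_m(z),\gamma(z))(\mathcal{X}^0 + \mathcal{V}^0)e^{-\psi_m(z)t/2}$ for all $t \geq 0$, where $\mathcal{X}^0 = \|X^0(z) - \tilde{X}^0(z)\|$ and $\mathcal{V}^0 = \|V^0(z) - \tilde{V}^0(z)\|$. Adding these and taking the supremum over $t \geq 0$ gives the assertion with
\[ M_0(z) := 1 + \frac{2B_\infty(\psi_m(z),\gamma(z))}{\psi_m(z)} + B_\infty(\psi_m(z),\gamma(z)), \]
which, by the definition of $B_\infty$ in Lemma~\ref{L3.4} together with \eqref{NNE-2} and the formula for $\gamma(z)$, depends only on $(\|\psi(\cdot,z)\|_{Lip}, \|V^0\|, x_M(z), \tilde{x}_M(z))$. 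The only genuine obstacle is securing the positivity of $\psi_m(z)$; once the pathwise flocking picture of Theorem~\ref{T3.1} is in place for both trajectories, the remainder is merely bookkeeping of constants and requires no further estimate.
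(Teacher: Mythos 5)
Your proposal is correct and follows exactly the paper's own route: it reads the coupled differential inequalities of Lemma \ref{L3.3} as the hypotheses of Lemma \ref{L3.4} with $\alpha=\psi_m(z)$, $\gamma=2\sqrt{2}\|\psi(\cdot,z)\|_{Lip}\|V^0(z)\|$ and $f\equiv 0$, and your final constant $M_0(z)=1+B_\infty(1+2/\psi_m(z))$ coincides with the paper's ${\bar B}_\infty$-based constant. Your additional remark that $\psi_m(z)>0$ must be secured via Theorem \ref{T3.1} and the positivity of $\psi$ is a point the paper leaves implicit, but it does not change the argument.
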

\begin{proof} We set
\[ {\mathcal X} :=  \| \Delta_x(t,z) \|, \quad {\mathcal V}:= \| \Delta_v(t,z) \|, \quad \alpha :=   \psi_m(z), \quad \gamma := 2 \sqrt{2} ||\psi(z)||_{Lip}   ||V^0(z)||, \quad f = 0. \]
Then, the constant ${\bar B}_\infty$ becomes
\begin{align*}
\begin{aligned}
{\bar B}_\infty &=   \max \Big \{ \frac{2 \sqrt{2} ||\psi(z)||_{Lip}   ||V^0(z)||}{\psi_m(z)}, 1  \Big \}  \\
&\times \Big( 1 + \frac{16 \sqrt{2} ||\psi(z)||_{Lip}   ||V^0(z)||}{\psi^2_m(z) e^2}    e^{2 \sqrt{2} ||\psi(z)||_{Lip}   ||V^0(z)|| \int_0^{\infty} s e^{-\alpha s} ds} \Big),
\end{aligned}
\end{align*}
where $||\cdot||_{Lip}$ is the Lipschitz semi-norm. Then, Lemma \ref{L3.4} yields
\begin{align}
\begin{aligned} \label{C-12}
\| \Delta_v(t,z) \| &\leq  {\bar B}_\infty \Big( \| \Delta^0_x(z) \|  +  \| \Delta^0_v(z) \|  \Big)e^{-\frac{\psi_m(z)}{2} t } \leq  {\bar B}_\infty \Big( \| \Delta^0_x(z) \|  +  \| \Delta^0_v(z) \|  \Big),  \cr
\| \Delta_x(t,z) \| &\leq \Big[  1 + \frac{2{\bar B}_\infty}{ \psi_m(z)} \Big) \Big] \Big( \| \Delta^0_x(z) \| +  \| \Delta^0_v(z) \| \Big).
\end{aligned}
\end{align}
These yield
\[
\| \Delta_x(t,z) \| +  \| \Delta_v(t,z) \|  \leq  \Big[  1 + {\bar B}_\infty \Big(1 + \frac{2}{ \psi_m(z)} \Big) \Big] \Big( \| \Delta^0_x(z) \| +  \| \Delta^0_v(z) \| \Big).
\]
Finally, we set
\[ M_0(||\psi(z)||_{Lip}, ||V^0||, x_M(z), {\tilde x}_M(z)) := 1 + {\bar B}_\infty \Big(1 + \frac{2}{ \psi_m(z)} \Big) \]
to get the desired estimate.
\end{proof}

\section{Local sensitivity analysis} \label{sec:4}
\setcounter{equation}{0}
In this section, we provide a local sensitivity analysis for position and velocity processes. For the simplicity of presentation, we assume that the random space $\Omega$ is  one-dimensional, i.e., $\Omega \subset \bbr$, and we also recall notation:
 we set (for $k \ge 1$),
\begin{eqnarray*}
&&  \partial_z^k X(t,z) := (\partial_z^k x_1(t,z), \cdots, \partial_z^k x_N(t,z)), \quad \partial_z^k V(t,z) := (\partial_z^k v_1(t,z), \cdots, \partial_z^k v_N(t,z)), \cr
&&  ||X(t)||^2_{H^k_z} := \sum_{i=1}^{N} ||x_i(t)||^2_{H^k_z},  \quad   ||V(t)||^2_{H^k_z} := \sum_{i=1}^{N} ||v_i(t)||^2_{H^k_z}.
\end{eqnarray*}
In the following two subsections, we will derive the following local sensitivity estimates:
\begin{itemize}
\item
(Propagation of regularity in random space): $H_z^k$-regularity of initial data is propagated along the random C-S model \eqref{main}. For any $T \in (0, \infty)$,
\[  ||X^0||_{H^m_z} +  ||V^0||_{H^m_z} < \infty \quad \Longrightarrow \quad \sup_{0 \leq t < T} \Big( \|X(t) \|_{H^m_z} +  \|V(t) \|_{H^m_z} \Big) < \infty. \]
\item
(Stability of $H^k_z$-regularity in random space): $H^k_z$-norm of solution is $\ell_2$-stable with respect to initial data: for two solutions $(X, V)$ and $({\tilde X}, {\tilde V})$ with finite $H^k_z$-norms,
\[ ||X(t) - {\tilde X}(t)||_{H^k_z} +  ||V(t) - {\tilde V}(t)||_{H^k_z}  \lesssim   ||X^0 - {\tilde X}^0||_{H^k_z} +  ||V^0 - {\tilde V}^0||_{H^k_z} + \varepsilon, \quad \varepsilon \ll 1.  \]
\end{itemize}

\subsection{Propagation of $H^k_z$-regularity} \label{sec:4.1}
In this subsection, we present a prolongation of $H^k_z$-regularity along the random C-S flow, once the initial data is $H^k_z$-regular.

\subsubsection{Low-order derivative estimate} In this part, we study the propagation of $H_z^1$-regularity in random space. Suppose that
\[ \sum_{i=1}^{N} \partial_z^k v_i(t,z)  = 0, \quad t > 0,~~ z \in \Omega. \]
Then, it follows from Theorem \ref{T3.1} that for $t \geq 0,~~z \in \Omega$, we have
\[ \label{D-1}
||x_i(t,z)|| \leq x_M(z), \quad ||v_i(t,z)|| \leq  ||V^0(z)|| e^{-\psi(\sqrt{2}x_M(z), z) t}.
\]

Note that $(\partial_z x_i(t,z),  \partial_z v_i(t,z))$ satisfies
\begin{align}
\begin{aligned} \label{D-2}
\partial_t \partial_z x_i(t,z) &= \partial_z v_i(t,z), \quad t > 0, \quad i =1, \cdots, N, \\
\partial_t \partial_z v_i(t,z) &= \frac{1}{N} \sum_{j=1}^{N}  \psi(x_j(t,z) - x_i(t,z), z) (\partial_z v_j(t,z) - \partial_z v_i(t,z)) \\
&+  \frac{1}{N} \sum_{j=1}^{N} \Big[ \nabla_x \psi(x_j(t,z) - x_i(t,z), z) \cdot (\partial_z x_j(t,z) - \partial_z x_i(t,z))  \\
&+ \partial_z \psi (x_j(t,z) - x_i(t,z), z) \Big] \times (v_j(t,z) - v_i(t,z)),
\end{aligned}
\end{align}
By the same argument as in  Lemma \ref{L3.1}, we have the following estimates.
\begin{lemma} \label{L4.1} Suppose that the communication weight function $\psi$ satisfies
\begin{equation*} \label{D-2-1}
\sup_{(x, z) \in \bbr \times \Omega}( \|\nabla_x \psi(x,z)\| + |\partial_z \psi(x,z)|)  \leq \varepsilon_{\psi},
\end{equation*}
and let $\{ (x_i(t,z), v_i(t,z)) \}$ be a solution process to system \eqref{main}-\eqref{comm} with zero total momentum:
\[ \sum_{i=1}^{N} \partial_z^{k} v^0_i(z) = 0, \quad z \in \Omega, \quad  k \leq 1.\]
Then, we have
\begin{align*}
\begin{aligned} \label{D-2-2}
 \Big |\partial_t ||\partial_z X(t,z)|| \Big| &\leq ||\partial_z V(t,z) ||,\quad t > 0,~~z \in \Omega. \\
 \partial_t  ||\partial_z V(t,z)|| &\leq  -2 \psi(\sqrt{2} x_M(z), z) ||\partial_z V(t,z)||  \\
 &+  2 \varepsilon_\psi  ||V^0(z)||  \cdot ||\partial_z X(t,z)||  e^{-\psi(\sqrt{2}x_M(z), z) t} +  \varepsilon_\psi  ||V^0(z)|| e^{-\psi(\sqrt{2}x_M(z), z)t}.
\end{aligned}
\end{align*}
\end{lemma}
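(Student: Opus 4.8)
\textbf{Proof proposal for Lemma \ref{L4.1}.}

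The plan is to mimic the proof of Lemma \ref{L3.1} (the SDDI for $\|X\|,\|V\|$) applied to the linearized system \eqref{D-2}, tracking the extra inhomogeneous terms arising from differentiating $\psi$ in $x$ and $z$. First I would handle the position inequality: take the inner product of $\eqref{D-2}_1$ with $2\partial_z x_i$, sum over $i$, and use Cauchy--Schwarz exactly as in Lemma \ref{L3.3}(i) to get $\bigl|\partial_t \|\partial_z X\|^2\bigr| \le 2\|\partial_z X\|\,\|\partial_z V\|$, hence $\bigl|\partial_t\|\partial_z X\|\bigr| \le \|\partial_z V\|$. This step is routine.

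For the velocity inequality, take the inner product of $\eqref{D-2}_2$ with $2\partial_z v_i$ and sum over $i$. The first sum (the one with $\psi(x_{ji},z)(\partial_z v_j - \partial_z v_i)$) is treated by the standard symmetrization trick $(i,j)\leftrightarrow(j,i)$, turning it into $-\frac1N\sum_{i,j}\psi(x_{ji},z)\|\partial_z v_j - \partial_z v_i\|^2$; using the uniform bound $\|x_{ji}(t,z)\| \le \sqrt2\,x_M(z)$ from Theorem \ref{T3.1} together with the monotonicity/positivity of $\psi$ in \eqref{comm} and the zero-momentum condition $\sum_i \partial_z v_i = 0$ (so that $\sum_{i,j}\|\partial_z v_j - \partial_z v_i\|^2 = 2N\|\partial_z V\|^2$), this dominant term is bounded above by $-2\psi(\sqrt2\,x_M(z),z)\|\partial_z V\|^2$. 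For the remaining two terms I would use $\|\nabla_x\psi\| \le \varepsilon_\psi$, $|\partial_z\psi|\le\varepsilon_\psi$, the bound $\|v_j - v_i\| = \|v_{ji}\| \le \sqrt2\,\|V(t,z)\| \le \sqrt2\,\|V^0(z)\|e^{-\psi(\sqrt2 x_M(z),z)t}$ from Theorem \ref{T3.1}, and Cauchy--Schwarz in the $(i,j)$-sum to convert $\sum_{i,j}(\|\partial_z x_i\| + \|\partial_z x_j\|)\|\partial_z v_i\|$ into a multiple of $N\,\|\partial_z X\|\,\|\partial_z V\|$ and $\sum_{i,j}\|\partial_z v_i\|$ into a multiple of $N\|\partial_z V\|$ (here I need to watch the factors of $N$ and the $\sqrt N$ from Cauchy--Schwarz; cf.\ the bookkeeping in \eqref{C-11}). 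This yields
\[
\partial_t \|\partial_z V\|^2 \le -2\psi(\sqrt2 x_M,z)\|\partial_z V\|^2 + C\varepsilon_\psi\|V^0\|\bigl(\|\partial_z X\| + 1\bigr)\|\partial_z V\|\,e^{-\psi(\sqrt2 x_M,z)t},
\]
and dividing through by $2\|\partial_z V\|$ (with the usual a.e.\ justification for $\partial_t\|\partial_z V\|$ when $\|\partial_z V\|$ may vanish, exactly as in Lemma \ref{L3.1}) gives the stated inequality.

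The main obstacle I anticipate is purely the constant bookkeeping: getting the coefficients $2$ and $1$ in front of the two $\varepsilon_\psi$-terms exactly right, which requires being careful about (a) the factor $\frac1N$ in $\eqref{D-2}_2$ versus the $N$ terms in each sum, (b) whether $\|\partial_z V\|$ denotes $\bigl(\sum_i\|\partial_z v_i\|^2\bigr)^{1/2}$ so that $\sum_i\|\partial_z v_i\| \le \sqrt N\|\partial_z V\|$, and (c) splitting the symmetric/antisymmetric contributions. There is also the minor technical point, identical to the one glossed in Lemma \ref{L3.1}, that $\|\partial_z V(t,z)\|$ is only Lipschitz (not $C^1$) in $t$, so the differential inequality holds in the a.e.\ sense; since \eqref{main} is the deterministic C-S system for each fixed $z$, this is handled verbatim as in Lemma 3.1 of \cite{A-C-H-L}. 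No genuinely new idea beyond Section \ref{sec:3} is needed.
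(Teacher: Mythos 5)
Your proposal follows the paper's proof essentially verbatim: Cauchy--Schwarz for the position inequality, the $(i,j)\leftrightarrow(j,i)$ symmetrization for the dominant velocity term, the flocking bounds of Theorem \ref{T3.1} together with $\|\nabla_x\psi\|,|\partial_z\psi|\le\varepsilon_\psi$ for the two inhomogeneous terms, and the zero-momentum identity $\sum_{i,j}\|\partial_z v_j-\partial_z v_i\|^2=2N\|\partial_z V\|^2$; the only cosmetic difference is that the paper also antisymmetrizes the $\nabla_x\psi$ and $\partial_z\psi$ terms before applying Cauchy--Schwarz, which changes nothing of substance. One caveat on the constant you should be aware of: your (correct) symmetrization gives $\mathcal I_{21}\le -2\psi(\sqrt2 x_M(z),z)\|\partial_z V\|^2$, which after dividing by $2\|\partial_z V\|$ yields the decay coefficient $-\psi(\sqrt2 x_M(z),z)$, \emph{not} the $-2\psi(\sqrt2 x_M(z),z)$ asserted in the lemma; the paper reaches $-2\psi$ only because its identity \eqref{D-3} carries an extraneous factor of $2$ in the symmetrization (it writes $-\tfrac{2}{N}\sum_{i,j}\psi\|\partial_z v_j-\partial_z v_i\|^2$ where the standard computation gives $-\tfrac{1}{N}$). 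So your argument proves the lemma with the weaker coefficient $-\psi(\sqrt2 x_M(z),z)$, which is the correct one; since the subsequent application through Lemma \ref{L3.4} only requires $\alpha>0$, this discrepancy is harmless, but "gives the stated inequality" should be read modulo that factor of two.
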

\begin{proof} (i) We take an inner product $\eqref{D-2}_1$ with $2  \partial_z x_i(t,z)$ and sum it over all $i$ to get
\[ \Big|\partial_t \sum_{i=1}^{N} ||\partial_z x_i(t,z)||^2 \Big| \leq 2 \Big( \sum_{i=1}^{N} ||\partial_z x_i(t,z) ||^2 \Big)^{\frac{1}{2}} \cdot  \Big( \sum_{i=1}^{N} ||\partial_z v_i(t,z) ||^2 \Big)^{\frac{1}{2}}. \]
This yields the desired estimate. \newline

\noindent (ii) Similarly, we have
\begin{align}
\begin{aligned} \label{D-3}
& \partial_t \sum_{i=1}^{N} |\partial_z v_i(t,z)|^2 \\
& \hspace{1cm} = -\frac{2}{N} \sum_{i,j = 1}^{N} \psi(x_j(t,z) - x_i(t,z), z) \|\partial_z v_j(t,z) - \partial_z v_i(t,z) \|^2  \\
&  \hspace{1cm} +  \frac{2}{N} \sum_{i,j=1}^{N} \Big[ \nabla_x \psi(x_j(t,z) - x_i(t,z), z) \cdot (\partial_z x_j(t,z) - \partial_z x_i(t,z))  \\
&  \hspace{1cm}  + \partial_z \psi (x_j(t,z) - x_i(t,z), z) \Big]  \partial_z v_i(t,z) \cdot (v_j(t,z) - v_i(t,z)) \\
& \hspace{1cm}  =: {\mathcal I}_{21} + {\mathcal I}_{22}.
\end{aligned}
\end{align}
Below, we estimate the term ${\mathcal I}_{2i}$ separately.

\vspace{0.2cm}

\noindent $\bullet$~(Estimate of ${\mathcal I}_{21}$): We use a uniform bound in Theorem \ref{T3.1}:
\[ \|x_j(t,z) - x_i(t,z)\| \leq \sqrt{2} X(t,z) \leq \sqrt{2} x_M(z) \]
to obtain
\begin{equation} \label{D-4}
{\mathcal I}_{21} \leq  -4 \psi(\sqrt{2}x_M(z), z)\sum_{i = 1}^{N} \|\partial_z v_j(t,z)\|^2 = -4 \psi(\sqrt{2}x_M(z), z) ||\partial_z V||^2.
\end{equation}

\vspace{0.2cm}

\noindent $\bullet$~(Estimate of ${\mathcal I}_{22}$): We use exchange transformation $i \longleftrightarrow j$ to find
\begin{align}
\begin{aligned} \label{D-5}
{\mathcal I}_{22} &=  \frac{2}{N} \sum_{i,j=1}^{N} \Big[ \nabla_x \psi(x_j(t,z) - x_i(t,z), z) \cdot (\partial_z x_j(t,z) - \partial_z x_i(t,z))  \\
&\quad+ \partial_z \psi (x_j(t,z) - x_i(t,z), z) \Big]  \partial_z v_i(t,z) \cdot (v_j(t,z) - v_i(t,z)) \\
&= - \frac{2}{N} \sum_{i,j=1}^{N} \Big[ \nabla_x \psi(x_j(t,z) - x_i(t,z), z) \cdot (\partial_z x_j(t,z) - \partial_z x_i(t,z))  \\
&\quad+ \partial_z \psi (x_j(t,z) - x_i(t,z), z) \Big]  \partial_z v_j(t,z) \cdot (v_j(t,z) - v_i(t,z)) \\
& = - \frac{1}{N} \sum_{i,j=1}^{N} \Big[ \nabla_x \psi(x_j(t,z) - x_i(t,z), z) \cdot (\partial_z x_j(t,z) - \partial_z x_i(t,z)) \Big] \\
& \hspace{1.5cm} \times ( \partial_z v_j(t,z) - \partial_z v_i(t,z)) \cdot (v_j(t,z) - v_i(t,z)) \\
& \quad - \frac{1}{N} \sum_{i,j=1}^{N} \Big[ \partial_z \psi (x_j(t,z) - x_i(t,z), z) \Big]  \\
&  \hspace{1.5cm} \times ( \partial_z v_j(t,z) - \partial_z v_i(t,z)) \cdot (v_j(t,z) - v_i(t,z)) \\
& =: {\mathcal I}_{221} + {\mathcal I}_{222}.
\end{aligned}
\end{align}
Next, we estimate the terms ${\mathcal I}_{22i},~i=1,2$ separately. \newline

\noindent $\diamond$ (Estimate of  ${\mathcal I}_{221}$): We use Theorem \ref{T3.1} to obtain
\begin{align}
\begin{aligned} \label{D-5-1}
\|v_j(t,z) - v_i(t,z) \| &\leq   \|v_j(t,z)\| + \|v_i(t,z)\| \leq  \sqrt{2} ||V(t,z)|| \\
&\leq  \sqrt{2} ||V^0(z)|| e^{-\psi(\sqrt{2}x_M(z), z) t}
 \end{aligned}
 \end{align}
to find
\begin{align}
\begin{aligned} \label{D-6}
|{\mathcal I}_{221}| &\leq \frac{\varepsilon_\psi}{N} \sum_{i,j=1}^{N} \|\partial_z x_j - \partial_z x_i\| \cdot \|\partial_z v_j - \partial_z v_i\| \cdot \|v_j - v_i \| \\
&\leq \frac{\sqrt{2} ||V^0(z)|| \varepsilon_\psi}{N} e^{-\psi(\sqrt{2}x_M(z), z) t} \sum_{i,j=1}^{N} \|\partial_z x_j - \partial_z x_i \| \cdot \|\partial_z v_j - \partial_z v_i\| \\
& \leq \frac{\sqrt{2} ||V^0(z)|| \varepsilon_\psi}{N} e^{-\psi(\sqrt{2}x_M(z), z) t}  \Big( \sum_{i,j =1}^{N} \|\partial_z x_j - \partial_z x_i \|^2 \Big)^{\frac{1}{2}} \Big(   \sum_{i,j=1}^{N}  \|\partial_z v_j - \partial_z v_i \|^2 \Big)^{\frac{1}{2}}.
\end{aligned}
\end{align}
On the other hand, we use the conservation of momentum $\sum_{i=1}^{N} \partial_z v_i = 0$ to obtain
\begin{align}
\begin{aligned} \label{D-7}
& \Big[ \sum_{i,j=1}^{N}  \|\partial_z x_j - \partial_z x_i \|^2 \Big]^{\frac{1}{2}} \leq  \Big[ 2 \sum_{i,j=1}^{N} ( \|\partial_z x_j \|^2 + \|\partial_z x_i \|^2 ) \Big]^{\frac{1}{2}} = \sqrt{4N} ||\partial_z X||, \\
& \Big[ \sum_{i,j=1}^{N}  \|\partial_z v_j - \partial_z v_i\|^2 \Big]^{\frac{1}{2}}
 = \Big[ 2 \sum_{i,j=1}^{N} \|\partial_z v_i \|^2 \Big]^{\frac{1}{2}} = \sqrt{2N} ||\partial_z V ||.
\end{aligned}
\end{align}
We combine \eqref{D-6} and \eqref{D-7} to obtain
\begin{equation} \label{D-8}
|{\mathcal I}_{221}| \leq  4 \varepsilon_\psi  ||V^0(z)||  e^{-\psi(\sqrt{2}x_M(z), z) t}  ||\partial_z X|| \cdot  ||\partial_z V ||.
\end{equation}

\vspace{0.2cm}

\noindent $\diamond$ (Estimate of  ${\mathcal I}_{222}$): Similarly, we use \eqref{D-5-1} and \eqref{D-7} to obtain
\begin{align}
\begin{aligned} \label{D-9}
|{\mathcal I}_{222}| &\leq \frac{\varepsilon_\psi}{N} \sum_{i,j=1}^{N} \|\partial_z v_j(t,z) - \partial_z v_i(t,z)\| \cdot \|v_j(t,z) - v_i(t,z)\| \\
 &\leq  \frac{\varepsilon_\psi}{N}  \Big( \sum_{i,j=1}^{N} \|\partial_z v_j(t,z) - \partial_z v_i(t,z)\|^2 \Big)^{\frac{1}{2}} \cdot  \Big( \sum_{i,j=1}^{N} \|v_j(t,z) - v_i(t,z)\|^2 \Big)^{\frac{1}{2}} \\
 &\leq 2 \varepsilon_\psi ||\partial_z V(t,z)|| \cdot ||V(t,z)||  \\
 &\leq 2 \varepsilon_\psi  ||V^0(z)|| e^{-\psi(\sqrt{2}x_M(z), z)t} ||\partial_z V(t,z)||.
\end{aligned}
\end{align}
In \eqref{D-5}, we combine all estimates \eqref{D-8}, \eqref{D-9} to obtain
\begin{align}
\begin{aligned} \label{D-10}
|{\mathcal I}_{22}| &\leq    4 \varepsilon_\psi  ||V^0(z)||  e^{-\psi(2x_M(z), z) t}  ||\partial_z X(t,z)|| \cdot  ||\partial_z V(t,z) || \\
&+ 2  \varepsilon_\psi  ||V^0(z)||  e^{-\psi(\sqrt{2} x_M(z), z)t} ||\partial_z V(t,z)||.
\end{aligned}
\end{align}
Hence in \eqref{D-3}, we again combine \eqref{D-4} and \eqref{D-10} to obtain
\begin{align*}
\begin{aligned}
 \partial_t ||\partial_z V||^2 &\leq -4 \psi(\sqrt{2} x_M(z), z) ||\partial_z V||^2 \\
 &+  4\varepsilon_\psi  ||V^0(z)||  e^{-\psi(\sqrt{2}x_M(z), z) t}  ||\partial_z X|| \cdot  ||\partial_z V || + 2 \varepsilon_\psi  ||V^0(z)||  e^{-\psi(\sqrt{2}x_M(z), z)t} ||\partial_z V||.
\end{aligned}
\end{align*}
This yields the desired estimate.
\end{proof}

\begin{theorem} \label{T4.1}
Suppose that the communication weight function $\psi$ satisfies
\[
\sup_{(x, z) \in \bbr \times \Omega}( |\nabla_x \psi(x,z)| + |\partial_z \psi(x,z)|)  \leq \varepsilon_{\psi} < \infty,
\]
and let $\{ (x_i(t,z), v_i(t,z)) \}$ be a solution process to system \eqref{main}-\eqref{comm} with zero total momentum:
\[ \sum_{i=1}^{N} \partial_z^{k} v^0_i(z) = 0, \quad z \in \Omega, \quad  k \leq 1, \quad ||\partial_z X^0(z)|| + ||\partial_z V^0(z)|| < \infty. \]
Then, there exists a positive constant $M_1 = M_1(z)$ only depending on $\varepsilon_\psi, ||V^0(z)||$ and $||X^0(z)||$ such that
\begin{eqnarray*}
&& (i)~||\partial_z X(t,z)|| \leq M_1(z) \Big (||\partial_z X^0(z)|| + ||\partial_z V^0(z)|| +  \varepsilon_\psi  \Big), \quad t \geq 0,~~z \in \Omega.  \cr
&& (ii)~||\partial_z V(t,z)|| \leq M_1(z) \Big (||\partial_z X^0(z)|| + ||\partial_z V^0(z)|| + \varepsilon_\psi  \Big) e^{-\frac{1}{2}\psi(\sqrt{2}x_M(z), z)t}.
\end{eqnarray*}
\end{theorem}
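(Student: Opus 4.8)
The plan is to feed the differential inequalities of Lemma \ref{L4.1} into the Grönwall-type Lemma \ref{L3.4}, exactly as Theorem \ref{T3.2} did for the $\ell_2$-stability estimate. First I would set
\[
{\mathcal X}(t) := \|\partial_z X(t,z)\|, \qquad {\mathcal V}(t) := \|\partial_z V(t,z)\|,
\]
and read off from Lemma \ref{L4.1} that the pair $({\mathcal X}, {\mathcal V})$ satisfies
\[
\Big| \frac{d{\mathcal X}}{dt} \Big| \leq {\mathcal V}, \qquad
\frac{d{\mathcal V}}{dt} \leq -\alpha {\mathcal V} + \gamma e^{-\alpha t} {\mathcal X} + f(t),
\]
with the identifications $\alpha := \psi(\sqrt{2}x_M(z),z)$ (note $2\psi(\sqrt{2}x_M(z),z) \geq \psi(\sqrt{2}x_M(z),z)$, so it suffices to keep the weaker rate $\alpha$ on the ${\mathcal V}$ term), $\gamma := 2\varepsilon_\psi \|V^0(z)\|$, and the forcing $f(t) := \varepsilon_\psi \|V^0(z)\| e^{-\psi(\sqrt{2}x_M(z),z)t}$. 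One checks that $f$ is nonnegative, nonincreasing, differentiable, decays to zero, and is integrable with $\|f\|_{L^1} = \varepsilon_\psi \|V^0(z)\| / \psi(\sqrt{2}x_M(z),z)$ and $f(0) = \varepsilon_\psi \|V^0(z)\|$, so the hypotheses of Lemma \ref{L3.4} hold with these $\alpha, \gamma, f$.

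Next I would invoke Lemma \ref{L3.4} directly. It produces a constant $B_\infty(\alpha,\gamma)$ — here depending only on $\varepsilon_\psi$, $\|V^0(z)\|$, and $x_M(z) = x_M(\|X^0(z)\|, \|V^0(z)\|)$, hence ultimately only on $\varepsilon_\psi$, $\|V^0(z)\|$, $\|X^0(z)\|$ — such that
\[
{\mathcal X}(t) \leq \Big(1 + \frac{2B_\infty}{\alpha}\Big)\big({\mathcal X}^0 + {\mathcal V}^0 + f(0) + \|f\|_{L^1}\big),
\]
\[
{\mathcal V}(t) \leq B_\infty\big({\mathcal X}^0 + {\mathcal V}^0 + f(0) + \|f\|_{L^1}\big) e^{-\frac{\alpha}{2}t} + \frac{1}{\alpha} f\Big(\frac{t}{2}\Big).
\]
Since $f(0) + \|f\|_{L^1} = \varepsilon_\psi \|V^0(z)\|\big(1 + \psi(\sqrt{2}x_M(z),z)^{-1}\big) \lesssim \varepsilon_\psi$ with a constant absorbed into $M_1(z)$, the first estimate already gives conclusion (i). For (ii), observe $\frac{1}{\alpha} f(t/2) = \frac{\varepsilon_\psi \|V^0(z)\|}{\psi(\sqrt{2}x_M(z),z)} e^{-\frac{1}{2}\psi(\sqrt{2}x_M(z),z)t}$, which has precisely the decay rate $\frac{1}{2}\psi(\sqrt{2}x_M(z),z)$ appearing in the statement; combining it with the $B_\infty(\cdots)e^{-\frac{\alpha}{2}t}$ term and enlarging $M_1(z)$ if necessary yields conclusion (ii). Finally I would define
\[
M_1(z) := \big(1 + \tfrac{2B_\infty}{\alpha}\big)\max\Big\{1,\ \tfrac{1}{\alpha} + B_\infty, \ 1 + \psi(\sqrt{2}x_M(z),z)^{-1}\Big\}
\]
(or any convenient majorant), which depends only on the stated quantities.

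The only real subtlety — and the step I expect to require the most care — is the bookkeeping needed to match Lemma \ref{L4.1} to the template of Lemma \ref{L3.4}: in Lemma \ref{L4.1} the coefficient of ${\mathcal V}$ is $2\psi(\sqrt{2}x_M(z),z)$ while the coefficient in the $e^{-\alpha t}{\mathcal X}$ term carries only a single $\psi(\sqrt{2}x_M(z),z)$, so one must choose $\alpha = \psi(\sqrt{2}x_M(z),z)$ and discard the extra factor of $2$ (legitimate, since weakening the dissipation rate only weakens the conclusion), and likewise check that the exponential $e^{-\psi(\sqrt{2}x_M(z),z)t}$ multiplying ${\mathcal X}$ decays at least as fast as $e^{-\alpha t}$ so that it can be bounded by $e^{-\alpha t}$ times a constant — which is immediate here with equality. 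Everything else is a mechanical application of the two prior lemmas, with all constants tracked to confirm the claimed dependence of $M_1(z)$ and the claimed decay rate in (ii).
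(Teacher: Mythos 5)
Your proposal is correct and follows essentially the same route as the paper: feed the differential inequalities of Lemma \ref{L4.1} into the Gr\"onwall-type Lemma \ref{L3.4} and read off $M_1(z)$ from the resulting constants. In fact your choice $\alpha=\psi(\sqrt{2}x_M(z),z)$ (discarding the extra factor of $2$ in the dissipation so that the rate matches the $e^{-\alpha t}\mathcal{X}$ coupling term exactly) is handled more carefully than in the paper, which takes $\alpha=2\psi$ despite the coupling term decaying only like $e^{-\psi t}$, and your choice is also what produces precisely the rate $\tfrac12\psi(\sqrt{2}x_M(z),z)$ claimed in part (ii).
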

\begin{proof} We set
\begin{eqnarray*}
&& {\mathcal X} := ||\partial_z X(t,z)||, \quad {\mathcal V} := ||\partial_z V(t,z)||, \quad \alpha := 2 \psi(x_M(z), z), \cr
&& \gamma := 2  \varepsilon_\psi ||V^0(z)||, \quad f(t,z) := \varepsilon_\psi e^{-\psi(\sqrt{2}x_M(z), z) t}.
\end{eqnarray*}
Then, it follows from Lemma \ref{L3.4} that we have
\[ {\tilde A}_\infty :=  e^{2  \varepsilon_\psi ||V^0(z)|| \int_0^{\infty} s e^{-\alpha s} ds}, \qquad  {\tilde B}_\infty := \max \Big \{ \frac{\varepsilon_\psi ||V^0(z)||}{\psi(\sqrt{2} x_M(z), z),}, 1  \Big \} \Big( 1 + \frac{4  \varepsilon_\psi ||V^0(z)|| {\tilde A}_\infty}{\psi^2(\sqrt{2} x_M(z), z) e^2} \Big). \]
With these constants ${\tilde A}_\infty$ and ${\tilde B}_\infty$, we have
\begin{eqnarray*}
&& ||\partial_z V(t,z)|| \leq  {\tilde B}_\infty (||\partial_z X^0(z)|| + ||\partial_z V^0(z)|| + \varepsilon_\psi ) e^{-\psi(\sqrt{2} x_M(z), z) t}, \cr
&& ||\partial_z X(t,z)|| \leq \Big(1 + \frac{2 {\tilde B}_\infty }{2\psi(x_M(z), z)} \Big)  (||\partial_z X^0(z)|| + ||\partial_z V^0(z)|| + \varepsilon_\psi ).
\end{eqnarray*}
Finally, we set
\[ M_1(z) := \max \Big \{   {\tilde B}_\infty(z), 1 + \frac{2 {\tilde B}_\infty(z) }{2\psi(x_M(z), z)} \Big \} \]
to get the desired estimates. Note that $M_1(z)$ depends only on $\varepsilon_\psi, ||V^0(z)||$ and $||X^0(z)||$.
\end{proof}
As a direct corollary of Theorem \ref{T3.1}, we have the following $H^1_z$-estimates.
\begin{corollary} \label{C4.1}
Suppose that the communication weight function $\psi$ satisfies
\[
\sup_{(x,z) \in \bbr^d \times \Omega} \psi(x,z) \geq \psi_0 > 0, \quad \sup_{(x, z) \in \bbr^d \times \Omega}( |\nabla_x \psi(x,z)| + |\partial_z \psi(x,z)|)  \leq \varepsilon_{\psi},
\]
and let $\{ (x_i(t,z), v_i(t,z)) \}$ be a solution process to the system \eqref{main}-\eqref{comm} with zero total momentum:
\[ \sum_{i=1}^{N} \partial_z^{k} v^0_i(z) = 0, \quad z \in \Omega, \quad  k \leq 1.\]
Then, for  $t \geq 0,~~z \in \Omega$, we have
\begin{eqnarray*}
&& (i)~||\partial_z X(t,z)||_{H^1_z} \leq \Big(  \|M_1\|_{H^1_z} ||X^0||_{H^1_z} +   \|M_1\|_{H^1_z} ||V^0||_{H^1_z} + \varepsilon_\psi   \|M_1\|_{H^1_z} \Big).  \cr
&& (ii)~||\partial_z V(t,z)||_{H^1_z} \leq e^{-\frac{\psi_0}{2} t}  \Big(  \|M_1\|_{H^1_z} ||X^0||_{H^1_z} +   \|M_1\|_{H^1_z} ||V^0||_{H^1_z} + \varepsilon_\psi   \|M_1\|_{H^1_z} \Big).
\end{eqnarray*}
\end{corollary}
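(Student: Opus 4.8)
The plan is to derive these $H^1_z$-estimates by integrating the pathwise (pointwise-in-$z$) bounds of Theorem~\ref{T4.1} against the probability weight $\pi(z)$, in the same spirit in which Corollary~\ref{C3.1} was obtained from Theorem~\ref{T3.1}; the one genuinely new ingredient is a one-dimensional Sobolev embedding used to handle the $z$-dependent prefactor $M_1(z)$.

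Concretely, I would first note that the uniform lower bound $\psi(x,z)\ge\psi_0>0$ forces $\psi(\sqrt{2}x_M(z),z)\ge\psi_0$ for every $z\in\Omega$, so the exponential factor $e^{-\frac12\psi(\sqrt{2}x_M(z),z)t}$ in Theorem~\ref{T4.1}(ii) is dominated by the $z$-independent quantity $e^{-\frac{\psi_0}{2}t}$; this is the only place the lower bound enters, and it is what produces the clean decay rate in part (ii). Next, I would take the inequalities of Theorem~\ref{T4.1}(i)--(ii), square them (both sides being nonnegative), multiply by $\pi(z)$ and integrate over $\Omega$. On the left this produces $\big\|\,\|\partial_z X(t,\cdot)\|\,\big\|_{L^2_z}$ and $\big\|\,\|\partial_z V(t,\cdot)\|\,\big\|_{L^2_z}$, which in the conventions of the Gallery of Notation are the quantities on the left of (i)--(ii); on the right one is left with $\big\|\,M_1\,(\|\partial_z X^0\|+\|\partial_z V^0\|+\varepsilon_\psi)\,\big\|_{L^2_z}$ (carrying also a factor $e^{-\frac{\psi_0}{2}t}$ in the velocity case). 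The key step is H\"older's inequality $\|fg\|_{L^2_z}\le\|f\|_{L^\infty_z}\|g\|_{L^2_z}$ with $f=M_1$, followed by the one-dimensional Sobolev embedding $H^1_z\hookrightarrow L^\infty_z$ (valid since $\Omega\subset\bbr$) to replace $\|M_1\|_{L^\infty_z}$ by a constant multiple of $\|M_1\|_{H^1_z}$. Combining this with the elementary bounds $\big\|\,\|\partial_z X^0(\cdot)\|\,\big\|_{L^2_z}\le\|X^0\|_{H^1_z}$, $\big\|\,\|\partial_z V^0(\cdot)\|\,\big\|_{L^2_z}\le\|V^0\|_{H^1_z}$ and $\varepsilon_\psi\|M_1\|_{L^2_z}\le\varepsilon_\psi\|M_1\|_{H^1_z}$ (recall $\pi$ is a probability density) assembles the stated right-hand sides.

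The main obstacle I anticipate is making rigorous the treatment of $M_1(z)$, which is not a constant: it depends on $z$ through $x_M(z)$, $\psi(\sqrt{2}x_M(z),z)$ and $\|V^0(z)\|$, and so cannot be pulled out of the $z$-integral. Two supporting facts must be established. First, $M_1\in H^1_z$: since $M_1$ is an explicit expression in $x_M(z)$, $\psi(\sqrt{2}x_M(z),z)$, $\|V^0(z)\|$ and $\varepsilon_\psi$, this reduces to the $H^1_z$-regularity of $z\mapsto x_M(z)$, which one obtains by differentiating the implicit relation~\eqref{C-5} in $z$ and using the assumed $H^1_z$-regularity of the initial data together with the smoothness of $\psi$ in $z$. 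Second, one needs the weighted embedding $H^1_\pi(\Omega)\hookrightarrow L^\infty(\Omega)$, which is standard on a bounded interval under a mild nondegeneracy assumption on $\pi$. Granting these, the remainder of the argument is routine and parallels the pathwise-to-average passage already carried out in Corollary~\ref{C3.1}.
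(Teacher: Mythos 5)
Your proposal follows what is essentially the paper's intended route: the paper states Corollary \ref{C4.1} with no proof at all (it is declared ``a direct corollary''), and the only sensible reading is exactly yours --- dominate the random decay rate $\psi(\sqrt{2}x_M(z),z)$ by $\psi_0$ from below, then integrate the squared pathwise bounds of Theorem \ref{T4.1} against $\pi(z)$. You are in fact more careful than the paper: the passage from the pointwise product $M_1(z)\bigl(\|\partial_z X^0(z)\|+\|\partial_z V^0(z)\|+\varepsilon_\psi\bigr)$ to the product of norms $\|M_1\|_{H^1_z}\,\|X^0\|_{H^1_z}+\cdots$ genuinely requires an $L^\infty_z$ control of $M_1$, and your H\"older-plus-Sobolev-embedding step (with the attendant $H^1_z$-regularity of $x_M(z)$ via the implicit relation \eqref{C-5}) is the correct way to supply it; note only that this yields the estimate up to a multiplicative Sobolev constant and under the additional hypotheses that $\Omega$ is a bounded interval with $\pi$ nondegenerate, none of which the paper states. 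Two further caveats you implicitly (and reasonably) resolve: the hypothesis $\sup\psi\ge\psi_0$ must be read as $\inf\psi\ge\psi_0$, and the left-hand sides $\|\partial_z X(t,z)\|_{H^1_z}$, $\|\partial_z V(t,z)\|_{H^1_z}$ cannot literally carry the $H^1_z$-norm of the Gallery of Notation (which would require $\partial_z^2 X$, beyond the reach of Theorem \ref{T4.1}); they must be the $L^2_\pi$-norms of $\|\partial_z X(t,\cdot)\|$ and $\|\partial_z V(t,\cdot)\|$, which is what your integration produces.
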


\subsubsection{Higher-order derivative estimates} \label{sec:4.2} In this part, we present higher-order $H_z^m$-estimates for the propagation of regularity in random space.  \newline

Note that $(\partial_z^m x_i(t,z), \partial_z^m v_i(t,z))$ satisfy
\begin{align}
\begin{aligned} \label{D-11}
&\partial_t \partial_z^m x_i(t,z) = \partial_z^m v_i(t,z), \quad t > 0, \quad i =1, \cdots, N, \\
& \partial_t \partial_z^m v_i(t,z) \\
& \hspace{0.5cm} = \frac{1}{N} \sum_{j=1}^{N}  \psi(x_j(t,z) - x_i(t,z), z) (\partial_z^m v_j(t,z) - \partial_z^m v_i(t,z)) \\
& \hspace{0.8cm} + \underbrace{ \frac{1}{N} \sum_{j=1}^{N} \sum_{k = 0}^{m-1} {{m} \choose {k}} \partial_z^{m-k} (\psi(x_j(t,z) - x_i(t,z), z)) (\partial_z^k v_j(t,z) - \partial_z^k v_i(t,z))}_{=: {\mathcal K}}.
\end{aligned}
\end{align}
Note that the terms ${\mathcal K}$ contain only lower order terms in $(\partial_z^k v_j(t,z) - \partial_z^k v_i(t,z))$ with $k \leq m-1$. As can be seen in Theorems \ref{T3.1} and  \ref{T4.1}, the lower-order terms $V$ and  $\partial_z V$ decays exponentially. Thus, by the induction and Gronwall's type estimate in Lemma \ref{L3.4}, we can expect the exponential decay of higher-order derivatives $\partial_z^{k} V(t,z)$ with $k \geq 2$. This will be made rigorous in the sequel.
\begin{lemma} \label{L4.2}
For $m \in \bbz_+$, suppose that $\psi$ satisfies
\begin{equation} \label{D-11-1}
  \sup_{(x, z) \in \bbr \times \Omega} |\partial^\alpha_{x,z} \psi| \leq \varepsilon_{\psi}, \quad  1 \leq |\alpha| \leq m,
\end{equation}
and let $\{ (x_i(t,z), v_i(t,z)) \}$ be a solution process to system \eqref{main}.  Then, we have
\[ |\partial_z^m [ \psi(x_j(t,z) - x_i(t,z), z) ] | \lesssim \varepsilon_\psi \Big( |\partial^m_z  (x_j(t,z) - x_i(t,z))| + {\mathcal P}_{i,j, m-1} \Big),
\]
where ${\mathcal P}_{i,j,m-1}$ is a polynomial with degree $m-1$ in $\partial_z^k (x_j(t,z) - x_i(t,z)),~0 \leq k \leq m-1.$
\end{lemma}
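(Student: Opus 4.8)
Fix $t$ and write $g(z):=x_j(t,z)-x_i(t,z)\in\bbr^d$ and $\Phi(z):=(g(z),z)\in\bbr^{d}\times\Omega$, so that the quantity to be estimated is $\partial_z^m\big[\psi(\Phi(z))\big]$, with $\psi$ regarded as a map on $\bbr^{d}\times\Omega$. The plan is to expand this $m$--fold $z$--derivative of the composition by the higher--order chain rule of Appendix B (a Fa\`{a} di Bruno type formula) and then to extract the claimed structure from two observations: (a) hypothesis \eqref{D-11-1} makes $\psi$ of class $C^m$ and bounds every mixed partial $\partial^\alpha_{x,z}\psi$ with $1\le|\alpha|\le m$ by $\varepsilon_\psi$ (the solution being assumed smooth enough in $z$); and (b) the inner map $\Phi$ is affine in its $\Omega$--component, so that $\partial_z\Phi=(\partial_z g,1)$ while $\partial_z^k\Phi=(\partial_z^k g,0)$ for every $k\ge 2$.

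Applying the chain rule and using (b), one writes $\partial_z^m\big[\psi(\Phi(z))\big]$ as a finite sum --- with combinatorial coefficients depending only on $m$ --- of terms, each consisting of one partial derivative $\partial^\alpha_{x,z}\psi$ of order $1\le|\alpha|\le m$, evaluated at $(g(z),z)$, contracted against a product of $z$--derivatives $\partial_z^{k_1}g(z),\dots,\partial_z^{k_p}g(z)$ (together with possibly some factors equal to $1$, coming from order--one $z$--slots), where each $k_\ell\ge1$ and the orders satisfy $k_1+\cdots+k_p\le m$. Since the total order of differentiation in each term is exactly $m$, the value $k_\ell=m$ can occur for at most one factor, and then it must be the only $z$--derivative present ($p=1$, $|\alpha|=1$); for $m\ge 2$ the corresponding term is precisely $\nabla_x\psi\big(g(z),z\big)\cdot\partial_z^m g(z)$ (the alternative $\alpha=\partial_z$ would require $\partial_z^m z=0$). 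Hence
\[
 \partial_z^m\big[\psi(x_j(t,z)-x_i(t,z),z)\big]=\nabla_x\psi\big(g(z),z\big)\cdot\partial_z^m g(z)+{\mathcal R}_{i,j}(z),
\]
where ${\mathcal R}_{i,j}(z)$ collects all remaining terms; in each of them every $g$--factor has order at most $m-1$, so ${\mathcal R}_{i,j}$ is a finite sum of monomials in $\partial_z^k\big(x_j(t,z)-x_i(t,z)\big)$, $0\le k\le m-1$, each multiplied by one derivative $\partial^\alpha_{x,z}\psi$ with $1\le|\alpha|\le m$ evaluated at $(g(z),z)$, the whole combinatorial structure depending only on $m$.

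Now apply \eqref{D-11-1}. The distinguished term obeys $\big|\nabla_x\psi(g(z),z)\cdot\partial_z^m g(z)\big|\le\varepsilon_\psi\,\big|\partial_z^m g(z)\big|$, and in ${\mathcal R}_{i,j}(z)$ every $\psi$--factor is bounded in modulus by $\varepsilon_\psi$, so $|{\mathcal R}_{i,j}(z)|\le\varepsilon_\psi\,{\mathcal P}_{i,j,m-1}$, where ${\mathcal P}_{i,j,m-1}$ is the polynomial in $\partial_z^k\big(x_j(t,z)-x_i(t,z)\big)$, $0\le k\le m-1$, obtained from ${\mathcal R}_{i,j}$ by replacing each $\psi$--factor by $1$ and each monomial by its modulus. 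Absorbing the finitely many $m$--dependent combinatorial constants into $\lesssim$ gives
\[
 \big|\partial_z^m[\psi(x_j(t,z)-x_i(t,z),z)]\big|\lesssim\varepsilon_\psi\Big(\big|\partial_z^m(x_j(t,z)-x_i(t,z))\big|+{\mathcal P}_{i,j,m-1}\Big),
\]
which is the assertion. The only substantive point is the combinatorial bookkeeping in the middle step: one must make the chain--rule expansion of $\psi\circ\Phi$ precise enough to see that $\partial_z^m g$ appears in exactly one term and linearly, and that every other term organizes into a polynomial in the strictly lower--order derivatives $\partial_z^k g$, $k\le m-1$, with a structure independent of $i$, $j$ and $t$; once Appendix B is invoked, the bounds \eqref{D-11-1} turn each $\psi$--coefficient into the common factor $\varepsilon_\psi$ and the estimate follows at once.
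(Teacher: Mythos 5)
Your proposal is correct and follows essentially the same route as the paper: both invoke the higher-order chain rule of Appendix B, isolate the unique term in which $\partial_z^m(x_j-x_i)$ appears (linearly, with coefficient $\nabla_x\psi$, corresponding to the Diophantine solution $(k_1,\dots,k_n)=(0,\dots,0,1)$), and bound the remaining terms by $\varepsilon_\psi$ times a polynomial in the derivatives of order at most $m-1$ using \eqref{D-11-1}. The only cosmetic difference is that you package the mixed explicit/implicit $z$-dependence into the augmented map $\Phi(z)=(g(z),z)$ and apply a multivariate Fa\`a di Bruno once, whereas the paper first distributes the $m$ derivatives binomially between the explicit $z$-slot and the argument $x_j-x_i$ and then expands the latter; the bookkeeping and conclusion are identical.
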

\begin{proof} Note that for $m \in \bbz_+$,
\begin{align}
\begin{aligned} \label{D-12}
& \partial_z^m \Big( \psi(x_j(t,z) - x_i(t,z), z) \Big) \\
& \hspace{1cm} = \sum_{n=0}^{m} {{m} \choose {n}} \underbrace{(\partial_z^{m-n} \psi)(x_j(t,z) - x_i(t,z), z)}_{=: {\mathcal K}_{00}} \underbrace{\Big[ \nabla_y^n \psi (x_j(t,y) - x_i(t,y), z) \Big] |_{y=z}}_{=:{\mathcal K}_{1n}} \\
\end{aligned}
\end{align}
Note that the term ${\mathcal K}_{00}$ can be estimated using the assumption \eqref{D-11-1}:
\[  |(\partial_z^{m-n} \psi)(x_j(t,z) - x_i(t,z), z) | \leq \varepsilon_\psi.   \]
On the other hand, the term ${\mathcal K}_{1n}$ needs some care to show that it contains the highest derivative term $\nabla_x^n (x_j(t,z) - x_i(t,z))$  and lower-order terms. For this, we have to expand ${\mathcal K}_{1n}$ using the formula in \cite{H-M-Y} (for reader's convenience, we stated it in Appendix \ref{App-B}):
\begin{align*}
\begin{aligned}
{\mathcal K}_{1n} &= \sum \frac{n!}{k_1! k_2! \cdots k_n !} (\nabla_x^k \psi)(x_j(t,y) - x_i(t,y), z)  \Big( \frac{\partial_y x_j(t,y) - \partial_y x_i(t,y)}{1!} \Big)^{k_1}  \\
&\quad \times \Big(  \frac{\partial^2_y x_j(t,y) - \partial^2_y x_i(t,y)}{2!} \Big)^{k_2} \cdots \Big( \frac{\partial^n_y x_j(t,y) - \partial^n_y x_i(t,y)}{n!} \Big)^{k_n},
\end{aligned}
\end{align*}
where the sum is over all nonnegative integer solutions of the Diophantine equation:
\[
 k_1 + 2 k_2 + \cdots + n k_n = n \quad \mbox{and} \quad  k = k_1 + k_2 + \cdots + k_n.
\]
Note that the system of Diophantine equation has a solution
\[ (k_1, \cdots, k_{n-1}, k_n) = (0, \cdots, 0, 1), \quad k = 1, \]
which yields the highest derivative term:
\begin{align}
\begin{aligned} \label{D-13}
{\mathcal K}_{1n} &=   (\nabla_x \psi)(x_j(t,y) - x_i(t,y), z) (\partial^n_y x_j(t,y) - \partial^n_y x_i(t,y)) \\
& + \mbox{lower-order terms like $(\partial^k_y x_j(t,y) - \partial^k_y x_i(t,y))$ ~with $k < n$}.
\end{aligned}
\end{align}
We now combine \eqref{D-12} and \eqref{D-13} to obtain
\begin{align*}
\begin{aligned}
& \partial_z^m \Big( \psi(x_j(t,z) - x_i(t,z), z) \Big) \\
& \hspace{0.5cm} = \sum_{n=0}^{m} {{m} \choose {n}} (\partial_z^{m-n} \psi)(x_j(t,z) - x_i(t,z), z) {\mathcal K}_{1n}(x_j(t,z) - x_i(t,z), z) \\
& \hspace{0.5cm} =  \Big[ (\nabla_x \psi)(x_j(t,y) - x_i(t,y), z)  \sum_{n=0}^{m} {{m} \choose {n}}  (\partial_z^{m-n} \psi)(x_j(t,z) - x_i(t,z), z) \\
& \hspace{0.8cm} \times (\partial^n_y x_j(t,y) - \partial^n_y x_i(t,y)) \Big] + \mbox{lower-order terms} \\
& \hspace{0.5cm} =  (\nabla_x \psi)(x_j(t,y) - x_i(t,y), z)  (\partial^m_y x_j(t,y) - \partial^m_y x_i(t,y)) + \mbox{lower order terms}.
\end{aligned}
\end{align*}
We denote the lower order terms as a polynomial ${\mathcal P}_{i,j, m-1}$ in $\partial_z^k (x_j - x_i),~0 \leq k \leq m-1$.
This yields the desired estimate.
\end{proof}

\begin{lemma} \label{L4.3} For $m \in \bbz_+$, suppose that the communication weight function $\psi$ satisfies
\[
\sup_{(x, z) \in \bbr^d \times \Omega} |\nabla^{\alpha}_{x, z} \psi(x,z)| \leq \varepsilon_{\psi},
\]
and let $\{ (x_i(t,z), v_i(t,z)) \}$ be a solution process to the system \eqref{main}-\eqref{comm} with zero total momentum:
\[ ||X^0||_{H^m_z} +  ||V^0||_{H^m_z} < \infty, \qquad  \sum_{i=1}^{N}  \partial_z^k v^0_i(z) = 0, \quad z \in \Omega, \quad k \leq m. \]
Then, there exists a positive constant ${\bar C}$ such that
\begin{align}
\begin{aligned} \label{D-14}
& (i)~\partial_t ||\partial^m_z X|| \leq ||\partial^m_z V ||,\quad t > 0,~~z \in \Omega. \\
& (ii)~\partial_t  ||\partial^m_z V|| \leq - \psi(\sqrt{2} x_M(z), z) ||\partial_z^m V||  \\
& \hspace{2cm}\quad + {\tilde C} \varepsilon_\psi  ||V(0,z)|| e^{-\psi(\sqrt{2}x_M(z), z) t}  \Big( ||\partial_z^m X|| + \max_{i,j} ||{\mathcal P}_{i,j, m -1}|| \Big)  \\
& \hspace{2cm} \quad +{\tilde C}  \varepsilon_\psi {\mathcal P}_{m-1}(||\partial_z X||, \cdots, ||\partial_z^{m-1} X||) \Big( \max_{1\leq k \leq m-1} \|\partial_z^k V \| \Big).
\end{aligned}
\end{align}
where $C$ is a positive constant appearing in Lemma \ref{L4.2}.
\end{lemma}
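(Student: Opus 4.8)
The proof runs parallel to Lemmas \ref{L3.1} and \ref{L4.1}, the only new ingredient being the estimate of the commutator term $\mathcal{K}$ in $\eqref{D-11}_2$ via the Fa\`a di Bruno–type expansion of Lemma \ref{L4.2}. For (i), I would take the inner product of $\eqref{D-11}_1$ with $2\partial_z^m x_i(t,z)$, sum over $i$, and apply Cauchy--Schwarz, obtaining $\big|\partial_t\|\partial_z^m X\|^2\big|\le 2\|\partial_z^m X\|\,\|\partial_z^m V\|$, hence the first inequality. For (ii), I take the inner product of $\eqref{D-11}_2$ with $2\partial_z^m v_i(t,z)$ and sum over $i$. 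The leading sum $\frac2N\sum_{i,j}\psi(x_j-x_i,z)\,\partial_z^m v_i\cdot(\partial_z^m v_j-\partial_z^m v_i)$ is symmetrized via $(i,j)\leftrightarrow(j,i)$ using $\psi(-x,z)=\psi(x,z)$ into $-\frac1N\sum_{i,j}\psi(x_j-x_i,z)\|\partial_z^m v_j-\partial_z^m v_i\|^2$; then the uniform bound $\|x_j-x_i\|\le\sqrt2\,x_M(z)$ from Theorem \ref{T3.1}, the monotonicity of $\psi$ in \eqref{comm}, and the zero-momentum identity $\sum_i\partial_z^m v_i=0$ from Lemma \ref{L2.2} (so that $\sum_{i,j}\|\partial_z^m v_j-\partial_z^m v_i\|^2=2N\|\partial_z^m V\|^2$) bound this by $-2\psi(\sqrt2 x_M(z),z)\|\partial_z^m V\|^2$.

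\textbf{The commutator term.} The remaining contribution is $\frac2N\sum_{i,j}\sum_{k=0}^{m-1}\binom mk\,\partial_z^{m-k}[\psi(x_j-x_i,z)]\,\partial_z^m v_i\cdot(\partial_z^k v_j-\partial_z^k v_i)$. Here I would again symmetrize in $(i,j)$: since $z\mapsto\psi(x_j(t,z)-x_i(t,z),z)$ is symmetric under $i\leftrightarrow j$ (by radial symmetry of $\psi$) while $\partial_z^k v_j-\partial_z^k v_i$ is antisymmetric, each summand's factor $\partial_z^m v_i$ may be replaced by $-\tfrac12(\partial_z^m v_j-\partial_z^m v_i)$. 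I then split the $k$-sum. For $k=0$: Lemma \ref{L4.2} gives $|\partial_z^m[\psi(x_j-x_i,z)]|\lesssim\varepsilon_\psi\big(|\partial_z^m(x_j-x_i)|+\mathcal{P}_{i,j,m-1}\big)$, and Theorem \ref{T3.1} gives $\|v_j-v_i\|\le\sqrt2\,\|V^0(z)\|e^{-\psi(\sqrt2 x_M(z),z)t}$; a Cauchy--Schwarz in $(i,j)$ together with $\big(\sum_{i,j}|\partial_z^m(x_j-x_i)|^2\big)^{1/2}\lesssim\|\partial_z^m X\|$ and $\big(\sum_{i,j}\|\partial_z^m v_j-\partial_z^m v_i\|^2\big)^{1/2}\lesssim\|\partial_z^m V\|$ produces the term $\tilde C\varepsilon_\psi\|V^0(z)\|e^{-\psi(\sqrt2 x_M(z),z)t}\big(\|\partial_z^m X\|+\max_{i,j}\|\mathcal{P}_{i,j,m-1}\|\big)\|\partial_z^m V\|$. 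For $1\le k\le m-1$: applying Lemma \ref{L4.2} at order $m-k\le m-1$ bounds $|\partial_z^{m-k}[\psi(x_j-x_i,z)]|$ by $\varepsilon_\psi$ times a polynomial in $\partial_z^\ell(x_j-x_i)$, $0\le\ell\le m-1$, while $\|\partial_z^k v_j-\partial_z^k v_i\|\le 2\|\partial_z^k V\|\le 2\max_{1\le k\le m-1}\|\partial_z^k V\|$; using $|\partial_z^\ell(x_j-x_i)|\le 2\|\partial_z^\ell X\|$ and Cauchy--Schwarz in $(i,j)$ gives the term $\tilde C\varepsilon_\psi\,\mathcal{P}_{m-1}(\|\partial_z X\|,\dots,\|\partial_z^{m-1}X\|)\big(\max_{1\le k\le m-1}\|\partial_z^k V\|\big)\|\partial_z^m V\|$.

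\textbf{Conclusion.} Collecting the above gives $\partial_t\|\partial_z^m V\|^2\le -2\psi(\sqrt2 x_M(z),z)\|\partial_z^m V\|^2+(\text{RHS of }\eqref{D-14}_2)\cdot\|\partial_z^m V\|$; dividing by $2\|\partial_z^m V\|$ (with the standard argument at points where it vanishes), absorbing numerical and $N$-dependent factors into $\tilde C$, and using $-2\psi\le-\psi$, yields $\eqref{D-14}_2$. The delicate point is the commutator step: one must trace through the Fa\`a di Bruno expansion of Lemma \ref{L4.2} to verify that the \emph{top-order} quantity $\|\partial_z^m X\|$ appears \emph{only} accompanied by the exponentially small factor $\|V^0(z)\|e^{-\psi(\sqrt2 x_M(z),z)t}$ (inherited from $\|v_j-v_i\|$ in the $k=0$ term), with every genuinely nonlinear polynomial contribution confined to the lower derivatives $\partial_z^\ell X$, $\ell\le m-1$ — this separation is exactly what will make an induction on $m$ together with the Gronwall-type Lemma \ref{L3.4} close in the theorem that follows.
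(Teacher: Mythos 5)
Your proposal is correct and follows essentially the same route as the paper: take the inner product with $2\partial_z^m v_i$, symmetrize the leading term to get the dissipative bound $-2\psi(\sqrt2 x_M(z),z)\|\partial_z^m V\|^2$, and split the commutator sum into the $k=0$ piece (controlled via Lemma \ref{L4.2} together with the exponential decay of $\|v_j-v_i\|$ from Theorem \ref{T3.1}) and the $1\le k\le m-1$ pieces (absorbed into ${\mathcal P}_{m-1}$ times $\max_k\|\partial_z^k V\|$). The only cosmetic difference is your extra $(i,j)$-symmetrization of the commutator term, which the paper skips in favor of a direct Cauchy--Schwarz bound; both yield the same estimate up to the constant ${\tilde C}$.
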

\begin{proof} (i) The first inequality follows from the same argument as in (i) Lemma \ref{L4.1}. \newline

\noindent (ii)~We take an inner product $\eqref{D-11}_2$ with $2\partial_z^m v_i(t,z)$ and sum it over all $i$ to get
\begin{align}
\begin{aligned} \label{D-15}
& \partial_t \sum_{i=1}^{N} \|\partial_z^m v_i(t,z) \|^2 \\
& \hspace{1cm} = \frac{2}{N} \sum_{i,j=1}^{N}  \psi(x_j(t,z) - x_i(t,z), z)\partial_z^m v_i(t,z) \cdot (\partial_z^m v_j(t,z) - \partial_z^m v_i(t,z)) \\
& \hspace{1.3cm} +  \frac{2}{N} \sum_{i,j=1}^{N} \sum_{k = 0}^{m-1} {{m} \choose {k}} \partial_z^{m-k} (\psi(x_j(t,z) - x_i(t,z), z)) \\
& \hspace{1.3cm} \times  \partial_z^m v_i(t,z) \cdot (\partial_z^k v_j(t,z) - \partial_z^k v_i(t,z)) \\
& \hspace{1cm} =: {\mathcal I}_{31} + {\mathcal I}_{32}.
\end{aligned}
\end{align}

Next, we estimate the terms ${\mathcal I}_{3i},~i=1,2$ separately. \newline

\noindent $\bullet$ (Estimate of ${\mathcal I}_{31}$):  By the exchange transformation $i \leftrightarrow j$ and zero total momentum, we have
\begin{align}
\begin{aligned} \label{D-19}
{\mathcal I}_{31} &=  \frac{2}{N} \sum_{i,j=1}^{N}  \psi(x_j(t,z) - x_i(t,z), z)\partial_z^m v_i(t,z) \cdot (\partial_z^m v_j(t,z) - \partial_z^m v_i(t,z))  \\
&=  -\frac{2}{N} \sum_{i,j=1}^{N}  \psi(x_j(t,z) - x_i(t,z), z)\partial_z^m v_j(t,z) \cdot (\partial_z^m v_j(t,z) - \partial_z^m v_i(t,z))  \\
&=   -\frac{1}{N} \sum_{i,j=1}^{N}  \psi(x_j(t,z) - x_i(t,z), z) |\partial_z^m v_j(t,z) - \partial_z^m v_i(t,z))|^2 \\
& \leq -2\psi(\sqrt{2}x_M(z), z) \sum_{i=1}^{N} \|\partial_z^m v_i(t,z)\|^2.
\end{aligned}
\end{align}
\noindent $\bullet$ (Estimate of ${\mathcal I}_{32}$): We use Lemma \ref{L4.3} to obtain
\begin{align}
\begin{aligned} \label{D-20}
|{\mathcal I}_{32}| &\leq  \frac{2C \varepsilon_\psi  }{N} \sum_{i,j=1}^{N} \sum_{k = 0}^{m-1} {{m} \choose {k}} ( |\partial^{m-k}_z  x_j| + |\partial^{m-k}_z  x_i|  + {\mathcal P}_{i,j, m-k -1}) \\
&\quad \times |\partial_z^m v_i(t,z)| \cdot |(\partial_z^k v_j - \partial_z^k v_i| \\
&\leq  \frac{2C  \varepsilon_\psi  }{N}  \sum_{i,j=1}^{N} \Big[ ( |\partial^{m}_z  x_j| + |\partial^{m}_z  x_i|  + {\mathcal P}_{i,j, m -1}) |\partial_z^m v_i| \cdot |v_j -v_i| \Big] \\
&\quad + \frac{2C \varepsilon_\psi  }{N} \sum_{i,j=1}^{N} \sum_{k = 1}^{m-1} {{m} \choose {k}} ( |\partial^{m-k}_z  x_j| + |\partial^{m-k}_z  x_i|  + {\mathcal P}_{i,j, m-k -1}) \\
&\times |\partial_z^m v_i| \cdot |(\partial_z^k v_j - \partial_z^k v_i| \\
&=:{\mathcal I}_{321} + {\mathcal I}_{322}.
\end{aligned}
\end{align}

\noindent $\diamond$ (Estimate of ${\mathcal I}_{321}$): We use the flocking estimate in Theorem \ref{T3.1}:
\[  |v_j(t,z) -v_i(t,z)| \leq \sqrt{2} ||V(t,z)|| \leq \sqrt{2}  ||V(0,z)||_{2, \infty} e^{-\psi(\sqrt{2}x_M(z), z) t}   \]
to obtain
\begin{align}
\begin{aligned} \label{D-21}
{\mathcal I}_{321} &= \frac{2C \varepsilon_\psi  }{N}  \sum_{i,j=1}^{N} \Big[ ( |\partial^{m}_z  x_j| + |\partial^{m}_z  x_i|  + {\mathcal P}_{i,j, m -1}) |\partial_z^m v_i| \cdot |v_j -v_i| \Big]  \\
&\leq  \frac{2\sqrt{2} C \varepsilon_\psi  ||V(0,z)|| }{N}  e^{- \psi(\sqrt{2}x_M(z), z) t}  \sum_{i,j=1}^{N} \Big ( |\partial^{m}_z  x_j| + |\partial^{m}_z  x_i|  + {\mathcal P}_{i,j, m -1} \Big) |\partial_z^m v_i|  \\
&\leq  2\sqrt{2} C \varepsilon_\psi  ||V(0,z)|| e^{- \psi(\sqrt{2}x_M(z), z) t}  \Big(2 ||\partial_z^m X|| + \max_{i,j} ||{\mathcal P}_{i,j, m -1}|| \Big)  \|\partial_z^m V\|.
\end{aligned}
\end{align}
\noindent $\diamond$ (Estimate of ${\mathcal I}_{322}$):  By direct calculation, we have
\begin{align}
\begin{aligned} \label{D-22}
{\mathcal I}_{322} &\leq {\mathcal P}_{m-1}(||\partial_z X||, \cdots, ||\partial_z^{m-1} X||) \frac{2\sqrt{2} C  \varepsilon_\psi  }{N} \\
&\quad \times  \sum_{k = 1}^{m-1} {{m} \choose {k}} \sum_{i,j=1}^{N} |\partial_z^m v_i(t,z)| \cdot( |(\partial_z^k v_j(t,z)| +  |\partial_z^k v_i(t,z)| ) \\
&\leq  4 \sqrt{2} C \varepsilon_\psi   (2^m - 2) {\mathcal P}_{m-1}(||\partial_z X(t,z)||, \cdots, ||\partial_z^{m-1} X(t,z)||)  \\
&\quad \times  \|\partial_z^m V(t,z) \| \Big( \max_{1\leq k \leq m-1} \|\partial_z^k V(t,z) \| \Big),
\end{aligned}
\end{align}
where we use the identity  $\sum_{k = 1}^{m-1} {{m} \choose {k}}= 2^m - 2$.  \newline

Now, we combine \eqref{D-21} and \eqref{D-22} to get
\begin{align}
\begin{aligned}  \label{D-23}
|{\mathcal I}_{32}| &\leq  2\sqrt{2} C \varepsilon_\psi  ||V(0,z)||_{2, \infty} e^{-\psi(\sqrt{2}x_M(z), z) t} \\
&\times \Big( 2||\partial_z^m X(t,z)|| + \max_{i,j} ||{\mathcal P}_{i,j, m -1})(t,z)|| \Big)  \|\partial_z^m V(t,z)\| \\
&+4\sqrt{2} C \varepsilon_\psi   (2^m - 2) {\mathcal P}_{m-1}(||\partial_z X(t,z)||, \cdots, ||\partial_z^{m-1} X(t,z)||)  \\
&\times  \|\partial_z^m V(t,z) \| \Big( \max_{1\leq k \leq m-1} \|\partial_z^k V(t,z) \| \Big).
\end{aligned}
\end{align}
Finally, in \eqref{D-3}, we combine \eqref{D-19} and \eqref{D-23} to obtain the desired estimate.
\end{proof}
Next, we use Lemma \ref{L4.3}  to derive a propagation of regularity in random space.

\begin{theorem} \label{T4.2} For $m \in \bbz_+$, suppose that the communication weight function $\psi$ satisfies
\[
\sup_{(x, z) \in \bbr^d \times \Omega} |\nabla^{\alpha}_{x, z} \psi(x,z)| \leq \varepsilon_{\psi},  \quad |\alpha| \leq m,
\]
and let $\{ (x_i(t,z), v_i(t,z)) \}$ be a solution process to the system \eqref{main}-\eqref{comm} with zero total momentum:
\[ ||X^0(z)|| +  ||V^0(z)|| < \infty,~~z \in \Omega, \quad  \sum_{i=1}^{N}  v^0_i(z) = 0, \quad z \in \Omega.\]
Then, there exists positive constant $D_m(z)$ such that for $t \geq 0$,
\[
||\partial_z^m V(t,z)||  \leq D_m(z)  e^{-\frac{1}{2^m} \psi(\sqrt{2} x_M(z), z)t}, \qquad  ||\partial_z^m X(t,z)||  \leq \frac{2^m D_m(z)}{\psi(\sqrt{2} x_M(z), z)}.
\]
\end{theorem}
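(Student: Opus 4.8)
The plan is to prove this by induction on $m$, using Lemma~\ref{L4.3} to reduce the $m$-th order estimate to the lower-order estimates already established. The base cases $m=0$ and $m=1$ are exactly Theorem~\ref{T3.1} and Theorem~\ref{T4.1} respectively, with $D_0(z) = \|V^0(z)\|$ and $D_1(z)$ built from $M_1(z)$ and the initial data. So I would assume the claimed bounds hold for all indices up to $m-1$, i.e. there are constants $D_k(z)$ with
\[
\|\partial_z^k V(t,z)\| \leq D_k(z) e^{-\frac{1}{2^k}\psi(\sqrt{2}x_M(z),z)t}, \qquad \|\partial_z^k X(t,z)\| \leq \frac{2^k D_k(z)}{\psi(\sqrt{2}x_M(z),z)}, \qquad 1 \leq k \leq m-1,
\]
and then close the induction at level $m$.

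The key step is to feed the differential inequalities $\eqref{D-14}$ of Lemma~\ref{L4.3} into the Grönwall-type Lemma~\ref{L3.4} with the identifications $\mathcal{X} := \|\partial_z^m X(t,z)\|$, $\mathcal{V} := \|\partial_z^m V(t,z)\|$, $\alpha := \psi(\sqrt{2}x_M(z),z)$, and $\gamma := \tilde C \varepsilon_\psi \|V^0(z)\|$. The forcing term $f$ is where the induction hypothesis enters: the second line of $\eqref{D-14}$(ii) contributes $\tilde C \varepsilon_\psi \|V^0(z)\| e^{-\alpha t}\big(\max_{i,j}\|\mathcal{P}_{i,j,m-1}\|\big)$, and the third line contributes $\tilde C \varepsilon_\psi \mathcal{P}_{m-1}(\|\partial_z X\|,\dots,\|\partial_z^{m-1}X\|)\big(\max_{1\le k\le m-1}\|\partial_z^k V\|\big)$. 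By the induction hypothesis each $\|\partial_z^k X\|$ is bounded by a time-independent constant, so $\mathcal{P}_{m-1}(\cdots)$ and each $\|\mathcal{P}_{i,j,m-1}\|$ are bounded by constants depending only on the $D_k(z)$, $k\le m-1$; and $\max_{1\le k\le m-1}\|\partial_z^k V(t,z)\| \le \big(\max_k D_k(z)\big)e^{-\frac{1}{2^{m-1}}\alpha t}$. Hence $f(t,z)$ is a sum of exponentially decaying terms, each nonincreasing, integrable, and decaying to zero, so it satisfies the hypotheses of Lemma~\ref{L3.4}. Applying that lemma yields
\[
\|\partial_z^m V(t,z)\| \leq B_\infty(\alpha,\gamma)\big(\mathcal{X}^0 + \mathcal{V}^0 + f(0) + \|f\|_{L^1}\big)e^{-\frac{\alpha}{2}t} + \frac{1}{\alpha}f\!\left(\frac{t}{2}\right),
\]
and since $f(t/2)$ decays like $e^{-\frac{1}{2^{m-1}}\alpha t}$ (the slowest inherited rate), the overall decay of $\|\partial_z^m V\|$ is no worse than $e^{-\frac{1}{2^m}\alpha t}$ — the extra factor $1/2$ appearing because Lemma~\ref{L3.4} produces $e^{-\alpha t/2}$ from a forcing that already decays at rate $1/2^{m-1}$, and $\min\{1/2, 1/2^{m-1}\} = 1/2^{m-1} \ge 1/2^m$; one then absorbs constants to write the clean bound with exponent $1/2^m$. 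The bound on $\|\partial_z^m X\|$ follows from the uniform-bound half of Lemma~\ref{L3.4}, giving $\|\partial_z^m X(t,z)\| \le (1 + 2B_\infty/\alpha)(\cdots) =: \frac{2^m D_m(z)}{\alpha}$ after defining $D_m(z)$ appropriately.

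\textbf{The main obstacle} is bookkeeping the dependence of the polynomial terms $\mathcal{P}_{i,j,m-1}$ and $\mathcal{P}_{m-1}$ on the lower-order quantities and verifying that, after substituting the induction hypothesis, the resulting forcing $f$ genuinely satisfies \emph{all} the structural hypotheses of Lemma~\ref{L3.4} — in particular that a finite sum of terms of the form $(\text{const})\cdot e^{-c_k t}$ with various $c_k > 0$ is itself differentiable, nonnegative, nonincreasing, integrable, and vanishing at infinity (true, but one should say so), and that the products of such exponentials appearing in $\mathcal{I}_{322}$-type terms still decay. A secondary point requiring care is tracking the decay \emph{rate}: the reason the exponent degrades from $\frac{1}{2^{m-1}}$ to $\frac{1}{2^m}$ at each induction step is precisely the halving built into Lemma~\ref{L3.4}'s conclusion, so I would be explicit that $D_m(z)$ and the rate $\frac{1}{2^m}\psi(\sqrt{2}x_M(z),z)$ are consistent with the inductive scheme and that $D_m(z)$ depends only on $\varepsilon_\psi$, $\|V^0(z)\|$, $\|X^0(z)\|$, the $H^m_z$-norms of the data, and $\psi(\sqrt{2}x_M(z),z)$. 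Once the forcing is controlled, the rest is a direct application of the two lemmas and is routine.
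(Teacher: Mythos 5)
Your proposal follows essentially the same route as the paper: induction on $m$ with base case from Theorem \ref{T4.1}, substitution of the inductive hypothesis into the differential inequalities \eqref{D-14} of Lemma \ref{L4.3}, and an application of the Gr\"onwall-type Lemma \ref{L3.4} with the forcing $f$ built from the exponentially decaying lower-order terms. If anything, you are more explicit than the paper about verifying the structural hypotheses on $f$ and about why the rate degrades to $\frac{1}{2^m}\psi(\sqrt{2}x_M(z),z)$ at each step, so no changes are needed.
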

\begin{proof} For the proof, we use the method of induction. \newline

\noindent $\bullet$~Step A (Initial step): For $m =1$, it follows from Theorem \ref{T4.1} that we have
\begin{align}
\begin{aligned} \label{D-24}
& (i)~||\partial_z X(t,z)|| \leq M_1(z) \Big (||\partial_z X(0,z)|| + ||\partial_z V(0,z)|| + \sqrt{2N}  \varepsilon_\psi  \Big), \quad t \geq 0,~~z \in \Omega, \\
& (ii)~||\partial_z V(t,z)|| \leq M_1 (z) \Big (||\partial_z X(0,z)|| + ||\partial_z V(0,z)|| + \sqrt{2N} \varepsilon_\psi  \Big) e^{-\psi(\sqrt{2} x_M(z), z)t}.
\end{aligned}
\end{align}
We set
\[  D_1(z) := M(z)      \]

\vspace{0.5cm}

\noindent $\bullet$~Step B (Inductive step): Suppose that the estimates hold for $k \leq m-1$, i.e., there exists $D_l(z)$ with $l \leq m-1$ such that
\begin{equation} \label{D-25}
||\partial_z^l V(t,z)||  \leq D_l(z) e^{-\frac{1}{2^l} \psi(\sqrt{2} x_M(z), z)t}, \quad ||\partial_z^l X(t,z)||  \leq \frac{ 2^l D_l(z)}{\sqrt{2} \psi(x_M(z), z)}.
\end{equation}
We substitute these ansatz to \eqref{D-14} to obtain
\begin{align*}
\begin{aligned}
& (i)~\Big| \partial_t ||\partial^m_z X(t,z)|| \Big| \leq ||\partial^m_z V(t,z) ||,\quad t > 0,~~z \in \Omega. \\
& (ii)~\partial_t  ||\partial^m_z V(t,z)|| \leq - \psi(\sqrt{2} x_M(z), z) ||\partial_z^m V(t,z)|| \\
& \hspace{3.3cm} + C_1(z) e^{-\psi(\sqrt{2} x_M(z), z) t}  ||\partial_z^m X(t,z)|| + C_2(z) e^{-\psi(\sqrt{2} x_M(z), z) t}.
\end{aligned}
\end{align*}
We next apply for Lemma \ref{L3.4} to obtain the desired estimates.
\end{proof}

As a direct corollary of Theorem \ref{T4.2}, we have the following $H^1_z$-estimates.
\begin{corollary} \label{C4.2}
For $m \in \bbz_+$, suppose that the communication weight function $\psi$ satisfies
\[
\sup_{(x,z) \in \bbr^d \times \Omega} \psi(x,z) \geq \psi_0 > 0, \quad \sup_{(x, z) \in \bbr^d \times \Omega} |\nabla^{\alpha}_{x, z} \psi(x,z)| \leq \varepsilon_{\psi},  \quad |\alpha| \leq m,
\]
and let $\{ (x_i(t,z), v_i(t,z)) \}$ be a solution process to the system \eqref{main}-\eqref{comm} with zero total momentum:
\[ ||X(0)||_{H^m_z} +  ||V(0)||_{H^m_z} < \infty,~~, \quad  \sum_{i=1}^{N}  v^0_i(z) = 0, \quad z \in \Omega.\]
Then, we have
\[ ||\partial_z X(t,z)||_{H^m_z} \leq \frac{2^m ||D_m||_{H^m_z}}{\psi_0}, \quad \|\partial_z V(t,z)||_{H^m_z} \leq ||D_m||_{H^m_z}  e^{-\frac{1}{2^m} \psi_0 t}. \]
\end{corollary}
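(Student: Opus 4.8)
The plan is to obtain this estimate by integrating the pathwise bounds of Theorem \ref{T4.2} against the density $\pi(z)$, using the uniform lower bound $\psi \ge \psi_0$ to strip the $z$-dependence from the decay rate and from the $1/\psi$ prefactor. No new dynamical estimate beyond Theorem \ref{T4.2} (together with Theorem \ref{T3.1} and Remark \ref{R2.1} at zeroth order) is needed; the content is a $z$-averaging plus some bookkeeping of the pathwise constants.

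First I would note that the hypothesis $\psi(x,z) \ge \psi_0 > 0$ forces $\psi(\sqrt{2}\,x_M(z),z) \ge \psi_0$ for every $z \in \Omega$, and, exactly as in Corollary \ref{C3.1}, $x_M(z)$ is finite and measurable for every $z$, so all the quantities below are well defined. Applying Theorem \ref{T4.2} at each order $\ell \in \{1,\dots,m\}$ — the assumption $|\alpha| \le m$ covers all these instances — and using $||V(t,z)|| \le ||V^0(z)|| e^{-\psi(\sqrt{2}x_M(z),z)t}$, $||X(t,z)|| \le x_M(z)$ (Theorem \ref{T3.1}, Remark \ref{R2.1}) at $\ell=0$, I get, for all $t \ge 0$ and $z \in \Omega$,
\[
||\partial_z^\ell V(t,z)|| \le D_\ell(z)\, e^{-\frac{1}{2^\ell}\psi(\sqrt{2}\,x_M(z),z)\,t}, \qquad
||\partial_z^\ell X(t,z)|| \le \frac{2^\ell D_\ell(z)}{\psi(\sqrt{2}\,x_M(z),z)},
\]
with $D_0(z) := ||V^0(z)||$ and the order-zero position term covered directly by $x_M(z)$. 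Since $0 \le \ell \le m$ and $t \ge 0$, the bounds $e^{-\frac{1}{2^\ell}\psi(\sqrt{2}x_M(z),z)t} \le e^{-\frac{1}{2^m}\psi_0 t}$ and $2^\ell/\psi(\sqrt{2}x_M(z),z) \le 2^m/\psi_0$ then upgrade these to the $z$-uniform pathwise inequalities $||\partial_z^\ell V(t,z)|| \le D_\ell(z) e^{-\frac{1}{2^m}\psi_0 t}$ and $||\partial_z^\ell X(t,z)|| \le \frac{2^m}{\psi_0} D_\ell(z)$.

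Next I would square these inequalities, multiply by $\pi(z)$, integrate over $\Omega$, and sum over $\ell = 0,\dots,m$. By the definition of the weighted Sobolev norm this yields
\[
||V(t)||_{H^m_z}^2 \le e^{-\frac{2}{2^m}\psi_0 t}\sum_{\ell=0}^{m}||D_\ell||_{L^2_z}^2, \qquad
||X(t)||_{H^m_z}^2 \le \frac{2^{2m}}{\psi_0^2}\sum_{\ell=0}^{m}||D_\ell||_{L^2_z}^2,
\]
and, setting $||D_m||_{H^m_z}^2 := \sum_{\ell=0}^{m}||D_\ell||_{L^2_z}^2$ (with $D_0$ replaced by $x_M$ in the $X$-estimate), taking square roots gives precisely the two stated bounds.

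The one point that needs care — and where I expect essentially all of the remaining work to be — is checking that $||D_m||_{H^m_z}$ is finite, i.e. that each $D_\ell(\cdot)$ is measurable and square-integrable against $\pi$. This is inherited from the standing assumptions: $||X^0||_{H^m_z}+||V^0||_{H^m_z}<\infty$ controls the initial Sobolev data, and under \eqref{NNE-1} the map $z \mapsto x_M(z) = x_M(||X^0(z)||,||V^0(z)||)$ is finite, measurable and satisfies $\bbe[x_M]<\infty$ (Corollary \ref{C3.1}). Since each $D_\ell(z)$ is built from $\varepsilon_\psi$, $||V^0(z)||$, $||X^0(z)||$ and $x_M(z)$ through the explicit constants appearing in Lemma \ref{L3.4}, verifying that these combinations lie in $L^2_\pi(\Omega)$ is the only genuine obstacle; once that is granted the corollary follows by the $z$-averaging just described.
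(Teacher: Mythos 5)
Your proposal is correct and follows essentially the same route as the paper: apply the pathwise bounds of Theorem \ref{T4.2}, use the uniform lower bound $\psi \geq \psi_0$ to replace $\psi(\sqrt{2}x_M(z),z)$ in the decay rate and denominator, and then integrate against $\pi(z)$ to pass to the $H^m_z$-norms. Your additional remarks on summing over the orders $\ell$ and on the measurability and square-integrability of the $D_\ell(z)$ are details the paper's two-line proof leaves implicit, but they do not change the argument.
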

\begin{proof}
It follows from Theorem \ref{T4.2} that we have
\begin{equation} \label{D-9}
||\partial_z^m V(t,z)||  \leq D_m(z)  e^{-\frac{1}{2^m} \psi_0 t}, \quad ||\partial_z^m X(t,z)||  \leq \frac{2^m D_m(z)}{\psi_0}.
\end{equation}
This yields the desired estimate.
\end{proof}
\subsection{Uniform stability}  \label{sec:4.3}
In this subsection, we present the uniform $\ell_2$-stability of system \eqref{main} -\eqref{comm}. Let $\{ (x_i(t,z), v_i(t,z)) \}_{i=1}^{N}$ and $\{ ({\tilde x}_i(t,z), {\tilde v}_i(t,z)) \}_{i=1}^{N}$ be smooth solutions to the C-S model \eqref{main}. Then, it is easy to see from \eqref{C-7b} that the differences $\partial_z^m \Delta^i_x(t,z)$ and $\partial_z^m \Delta^i_v(t,z)$ satisfy
\begin{align}
\begin{aligned} \label{D-26}
&\frac{d}{dt}  \partial_z^m \Delta^i_x(t,z) =  \partial_z^m \Delta^i_v(t,z), \quad  t>0, \quad 1 \leq i \leq N, \\
&\frac{d}{dt}  \partial_z^m \Delta^i_v(t,z) =\frac{1}{N}\sum_{j=1}^N\psi(x_{ji}(t,z), z)\Big( \partial_z^m \Delta^j_v(t,z) - \partial_z^m  \Delta^i_v(t,z)  \Big)\\
& \hspace{2.3cm} + \frac{1}{N} \sum_{j=1}^{N} \sum_{k = 0}^{m-1} {{m} \choose {k}} \partial_z^{m-k} (\psi(x_{ji}(t,z), z))
\Big( \partial_z^k \Delta^j_v(t,z) - \partial_z^k  \Delta^i_v(t,z)  \Big) \\
& \hspace{2.3cm} + \frac{1}{N} \sum_{j=1}^{N} \sum_{k = 0}^{m} {{m} \choose {k}} \partial_z^{m-k} \Big(  \psi(x_{ji}(t,z), z) -  \psi({\tilde x}_{ji}(t,z), z) \Big)  \partial_z^k {\tilde v}_{ji}(t,z).
\end{aligned}
\end{align}

\begin{lemma} \label{L4.4}
Let $\{ (x_i(t,z), v_i(t,z)) \}_{i=1}^{N}$ and $\{ ({\tilde x}_i(t,z), {\tilde v}_i(t,z)) \}_{i=1}^{N}$ be smooth solutions to the C-S model \eqref{main} - \eqref{comm} satisfying zero total momentum and \eqref{C-4}. Then, for $m \geq 1$ there exists a positive constant ${\bar D}(m, \varepsilon_\psi)$ only depending on $m$ and $\varepsilon_\psi$ such that
\begin{align}
\begin{aligned} \label{D-27}
& \Big | \frac{d}{dt} \|  \partial_z^m \Delta_x(t,z) \| \Big| \leq \|  \partial_z^m \Delta_v(t,z) \|, \quad \mbox{a.e.,}~~t > 0, \quad z \in \Omega, \\
&\frac{d}{dt}  \| \partial_z^m  \Delta_v(t,z)\| \leq  - \psi_m(z) \| \partial_z^m \Delta_v(t,z) \|  \\
&\hspace{0.5cm} + {\bar D} {\mathcal P}_{m-1}(||\partial_z X(t,z)||, \cdots, ||\partial_z^{m-1} X(t,z)||) \Big( \max_{1\leq k \leq m-1} \|\partial_z^k \Delta_v(t,z) \| \Big) \\
&\hspace{0.5cm} + {\bar D}  \Big[ {\mathcal P}_{m-1}(||\partial_z X(t,z)||, \cdots, ||\partial_z^{m-1} X(t,z)||) +  ||\partial_z^m X(t,z) || + \max_{i,j}  ||P_{i,j, m-1}|| \Big] \\
& \hspace{0.5cm} \times e^{-\frac{1}{2^m} \psi_m(z) t},
\end{aligned}
\end{align}
where $\psi_m(z)$ is a random variable defined in \eqref{NNE-2}.
\end{lemma}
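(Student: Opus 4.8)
The plan is to run the energy method of Lemma \ref{L3.3} on the $m$-th $z$-derivative of the difference system \eqref{D-26}, handling the $z$-commutators through $\psi$ exactly as in Lemma \ref{L4.3}. For the position inequality one takes the inner product of $\eqref{D-26}_1$ with $2\partial_z^m\Delta_x^i$, sums over $i$, and applies Cauchy--Schwarz as in part (i) of Lemma \ref{L4.1}; this gives $\big|\frac{d}{dt}\|\partial_z^m\Delta_x\|\big|\le\|\partial_z^m\Delta_v\|$ at once. For the velocity inequality one takes the inner product of $\eqref{D-26}_2$ with $2\partial_z^m\Delta_v^i$, sums over $i$, uses $\sum_i\partial_z^m\Delta_v^i=0$ (Lemma \ref{L2.2} applied to both solutions), and splits the right-hand side into ${\mathcal J}_1+{\mathcal J}_2+{\mathcal J}_3$ matching the three lines of $\eqref{D-26}_2$.

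For the \emph{dissipative term} ${\mathcal J}_1$, the exchange $i\leftrightarrow j$ together with $\sum_i\partial_z^m\Delta_v^i=0$ rewrites it as $-\frac1N\sum_{i,j}\psi(x_{ji},z)\|\partial_z^m\Delta_v^j-\partial_z^m\Delta_v^i\|^2$, and the pathwise flocking bound $\|x_{ji}(t,z)\|\le\sqrt2 x_M(z)$ from Theorem \ref{T3.1} together with the monotonicity of $\psi$ in \eqref{comm} give ${\mathcal J}_1\le-2\psi(\sqrt2 x_M(z),z)\|\partial_z^m\Delta_v\|^2\le-2\psi_m(z)\|\partial_z^m\Delta_v\|^2$ by \eqref{NNE-2}. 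For the \emph{$z$-commutator term} ${\mathcal J}_2$, one applies Lemma \ref{L4.2} to each $\partial_z^{m-k}\psi(x_{ji},z)$ and Cauchy--Schwarz in $(i,j)$: for $1\le k\le m-1$ only $\|\partial_z^\ell X\|$ with $\ell\le m-1$ occur, and after collecting the $X$-factors into a polynomial ${\mathcal P}_{m-1}(\|\partial_z X\|,\dots,\|\partial_z^{m-1}X\|)$ this part is $\lesssim\varepsilon_\psi{\mathcal P}_{m-1}(\cdots)\|\partial_z^m\Delta_v\|\max_{1\le k\le m-1}\|\partial_z^k\Delta_v\|$; the leftover $k=0$ piece pairs $\partial_z^m\psi$ (hence $\|\partial_z^m X\|$ and $\max_{i,j}\|{\mathcal P}_{i,j,m-1}\|$ via Lemma \ref{L4.2}) with $\Delta_v^j-\Delta_v^i$, which decays like $e^{-\frac{\psi_m(z)}{2}t}$ by \eqref{C-12}, and since $\frac{\psi_m}{2}\ge\frac{\psi_m}{2^m}$ this folds into the decaying source line of \eqref{D-27}.

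The main obstacle is the \emph{mismatch term} ${\mathcal J}_3$, which requires bounding $\big|\partial_z^{m-k}\big(\psi(x_{ji}(t,z),z)-\psi(\tilde x_{ji}(t,z),z)\big)\big|$ for $0\le k\le m$. I would write $\psi(x_{ji},z)-\psi(\tilde x_{ji},z)=\int_0^1\nabla_x\psi\big(\tilde x_{ji}+s(x_{ji}-\tilde x_{ji}),z\big)\cdot(\Delta_x^j-\Delta_x^i)\,ds$, differentiate $m-k$ times in $z$, and expand each $z$-derivative of the composition by the higher-order chain-rule formula of \cite{H-M-Y} recalled in Appendix \ref{App-B}. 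The top-order contribution is $\nabla_x\psi(\cdots,z)\cdot(\partial_z^{m-k}\Delta_x^j-\partial_z^{m-k}\Delta_x^i)$, which via $\|\partial_z^\ell\Delta_x\|\le\|\partial_z^\ell X\|+\|\partial_z^\ell\tilde X\|$ and Theorem \ref{T4.2} is controlled by $\|\partial_z^m X\|$ when $k=0$ and by lower-order $X$-norms when $k\ge1$; every remaining contribution is $\varepsilon_\psi$ times a polynomial of degree $\le m-k-1$ in the $z$-derivatives of $x_{ji}$, $\tilde x_{ji}$ and $\Delta_x^j-\Delta_x^i$, hence absorbed into ${\mathcal P}_{m-1}(\|\partial_z X\|,\dots)$ and $\max_{i,j}\|P_{i,j,m-1}\|$. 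Pairing with $\partial_z^k\tilde v_{ji}$, whose norm decays like $e^{-\frac{1}{2^k}\psi(\sqrt2\tilde x_M(z),z)t}$ by Theorem \ref{T4.2} together with $\psi(\sqrt2\tilde x_M(z),z)\ge\psi_m(z)$, yields $|{\mathcal J}_3|\lesssim\varepsilon_\psi\big[{\mathcal P}_{m-1}(\|\partial_z X\|,\dots)+\|\partial_z^m X\|+\max_{i,j}\|P_{i,j,m-1}\|\big]\|\partial_z^m\Delta_v\|\,e^{-\frac{1}{2^m}\psi_m(z)t}$. The delicate point is the bookkeeping: identifying which chain-rule terms are top-order versus remainder, and keeping the binomial sums $\sum_k\binom mk$ and the powers of $2^m$ absorbed into constants.

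Finally, collecting ${\mathcal J}_1+{\mathcal J}_2+{\mathcal J}_3$ gives $\frac{d}{dt}\|\partial_z^m\Delta_v\|^2\le-2\psi_m(z)\|\partial_z^m\Delta_v\|^2+2\|\partial_z^m\Delta_v\|\cdot(\text{source})$; dividing by $2\|\partial_z^m\Delta_v\|$ on the set where it is positive (the inequality being trivial where it vanishes) and absorbing all constants, binomial coefficients and powers of $2^m$ into a single $\bar D(m,\varepsilon_\psi)$ produces exactly \eqref{D-27}.
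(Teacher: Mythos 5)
Your proposal is correct and follows essentially the same route as the paper: the same energy estimate on the differentiated difference system \eqref{D-26}, the same three-term split (${\mathcal I}_{41},{\mathcal I}_{42},{\mathcal I}_{43}$ in the paper's notation), the same dissipative bound via Theorem \ref{T3.1} and \eqref{NNE-2}, and the same separation of the commutator term into the $k=0$ piece (controlled through the pathwise decay of $\Delta_v$ from \eqref{C-12}) and the $1\le k\le m-1$ piece (absorbed into ${\mathcal P}_{m-1}$ times $\max_k\|\partial_z^k\Delta_v\|$).

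The one place you diverge is the mismatch term ${\mathcal J}_3$: you propose the integral representation $\psi(x_{ji},z)-\psi({\tilde x}_{ji},z)=\int_0^1\nabla_x\psi(\cdot)\cdot(\Delta_x^j-\Delta_x^i)\,ds$ followed by the higher-order chain rule, so as to extract a genuine $\Delta_x$ factor from the top-order contribution. The paper is cruder here: it simply bounds $\partial_z^{m-k}\bigl(\psi(x_{ji},z)-\psi({\tilde x}_{ji},z)\bigr)$ by $\varepsilon_\psi$ times the polynomial ${\mathcal P}_{m-1}$ in the lower-order $X$-derivatives (applying Lemma \ref{L4.2} to each composition separately, without exploiting the difference structure) and then uses the decay of $\partial_z^k{\tilde v}_{ji}$ from Theorem \ref{T4.2}. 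Both land on the third line of \eqref{D-27}; your version costs more bookkeeping but is potentially sharper, since a bound proportional to $\|\partial_z^\ell\Delta_x\|$ rather than to absolute $X$-norms is what one would need to remove the additive $\varepsilon$-type remainder in the eventual stability estimate of Theorem \ref{T4.3}. One caveat shared by both arguments: the decaying source inherits $z$-dependent factors ($D_m(z)$ from Theorem \ref{T4.2}, and the initial-difference norms from \eqref{C-12}), so strictly speaking the prefactor is not a constant depending only on $(m,\varepsilon_\psi)$ as the statement asserts; this is an imprecision of the lemma itself, not a defect of your argument.
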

\begin{proof}
(i) We take an inner product  $\eqref{D-26}_1$ with $2 \Delta^i_x(t,z)$, sum it over all $i$ to get
\[ \Big| \frac{d}{dt}  || \partial_z^m \Delta^i_x(t,z)||^2  \Big|=  2 |  \partial_z^m\Delta^i_x(t,z) \cdot  \partial_z^m \Delta^i_v(t,z) | \leq 2 \| \partial_z^m \Delta^i_x(t,z) \| \cdot \|  \partial_z^m\Delta^i_v(t,z) \|. \]
This again yields the first differential inequality in \eqref{D-27}. \newline

\vspace{0.2cm}

\noindent (ii) Similarly, we have
\begin{align}
\begin{aligned} \label{D-28}
&\frac{d}{dt}  \sum_{i=1}^{N} \| \partial_z^m  \Delta^i_v(t,z)\|^2 \\
&\hspace{0.5cm} =\frac{2}{N}\sum_{i,j=1}^N\psi(x_{ji}(t,z), z)  \partial_z^m \Delta^i_v(t,z) \cdot \Big( \partial_z^m \Delta^j_v(t,z) - \partial_z^m  \Delta^i_v(t,z)  \Big)\\
&\hspace{0.8cm} + \frac{2}{N} \sum_{i,j=1}^{N} \sum_{k = 0}^{m-1} {{m} \choose {k}} \partial_z^{m-k} \psi(x_{ji}(t,z), z)  \partial_z^m \Delta^i_v(t,z) \cdot \Big( \partial_z^k \Delta^j_v(t,z) - \partial_z^k  \Delta^i_v(t,z)  \Big) \\
&\hspace{0.8cm} + \frac{2}{N} \sum_{i,j=1}^{N} \sum_{k = 0}^{m} {{m} \choose {k}} \partial_z^{m-k} \Big(  \psi(x_{ji}(t,z), z) -  \psi({\tilde x}_{ji}(t,z), z) \Big)  \partial_z^m \Delta^i_v(t,z) \cdot  \partial_z^k {\tilde v}_{ji}(t,z) \\
&\hspace{0.5cm}  =: {\mathcal I}_{41} + {\mathcal I}_{42} +  {\mathcal I}_{43}.
\end{aligned}
\end{align}

\vspace{0.2cm}

\noindent $\bullet$~Case A (Estimate of ${\mathcal I}_{41}$): Similar to \eqref{D-19}, we use the upper bound for $x_{ji}(t,z)$:
\[ \sup_{0 \leq t < \infty} ||x_{ji}(t,z)|| \leq \sqrt{2}x_M(z), \]
and zero total momentum to obtain
\begin{equation} \label{D-29}
{\mathcal I}_{41} \leq -2 \psi(\sqrt{2} x_M(z), z) \| \partial_z^m \Delta_v(t,z) \|^2.
\end{equation}

\vspace{0.2cm}

\noindent $\bullet$~Case B (Estimate of ${\mathcal I}_{42}$): First, we rewrite ${\mathcal I}_{42}$ as follows.
\begin{align}
\begin{aligned} \label{D-30}
{\mathcal I}_{42} &=  \frac{2}{N} \sum_{i,j=1}^{N} \partial_z^{m} \psi(x_{ji}(t,z), z)  \partial_z^m \Delta^i_v(t,z) \cdot \Big(\Delta^j_v(t,z) - \Delta^i_v(t,z)  \Big) \\
& \quad +  \frac{2}{N} \sum_{i,j=1}^{N} \sum_{k = 1}^{m-1} {{m} \choose {k}} \partial_z^{m-k} \psi(x_{ji}(t,z), z)  \partial_z^m \Delta^i_v(t,z) \cdot \Big( \partial_z^k \Delta^j_v(t,z) - \partial_z^k  \Delta^i_v(t,z)  \Big) \\
& =: {\mathcal I}_{421} +  {\mathcal I}_{422}.
\end{aligned}
\end{align}
Then, we use the same arguments as in \eqref{D-20} to find
\begin{align}
\begin{aligned} \label{D-31}
|{\mathcal I}_{421}| \leq 2\sqrt{2} C \varepsilon_\psi e^{-\frac{\psi_m(z)}{2} t} \Big( ||\partial_z^m X(t,z) || + \max_{i,j}  ||P_{i,j, m-1}|| \Big)  \| \partial_z^m \Delta_v(t,z) \|.
\end{aligned}
\end{align}
and
\begin{align}
\begin{aligned} \label{D-32}
 {\mathcal I}_{422} &\leq {\mathcal P}_{m-1}(||\partial_z X(t,z)||, \cdots, ||\partial_z^{m-1} X(t,z)||) \frac{2\sqrt{2} C  \varepsilon_\psi  }{N} \\
&\quad \times  \sum_{k = 1}^{m-1} {{m} \choose {k}} \sum_{i,j=1}^{N} | \partial_z^m \Delta^i_v(t,z)| \cdot \Big( |\partial_z^k \Delta^j_v(t,z)| + |\partial_z^k  \Delta^i_v(t,z) | \Big)  \\
&\leq  4\sqrt{2} C \varepsilon_\psi   (2^m - 2) {\mathcal P}_{m-1}(||\partial_z X(t,z)||, \cdots, ||\partial_z^{m-1} X(t,z)||)  \\
&\quad \times  \|\partial_z^m \Delta_v(t,z) \| \Big( \max_{1\leq k \leq m-1} \|\partial_z^k \Delta_v(t,z) \| \Big).
\end{aligned}
\end{align}
In \eqref{D-30}, we combine estimates \eqref{D-31} and \eqref{D-32} to obtain
\begin{align}
\begin{aligned} \label{D-33}
{\mathcal I}_{42} &\leq 2\sqrt{2} C \varepsilon_\psi e^{-\frac{\psi_m(z)}{2} t} \Big( ||\partial_z^m X(t,z) || + \max_{i,j}  ||P_{i,j, m-1}|| \Big)  \| \partial_z^m \Delta_v(t,z) \| \\
&\quad + 4\sqrt{2} C \varepsilon_\psi   (2^m - 2) {\mathcal P}_{m-1}(||\partial_z X(t,z)||, \cdots, ||\partial_z^{m-1} X(t,z)||)  \\
&\quad \times  \|\partial_z^m \Delta_v(t,z) \| \Big( \max_{1\leq k \leq m-1} \|\partial_z^k \Delta_v(t,z) \| \Big).
\end{aligned}
\end{align}

\vspace{0.2cm}

\noindent $\bullet$~Case C (Estimate on ${\mathcal I}_{43}$): We use the same arguments in ${\mathcal I}_{42}$ and Theorem \ref{T4.2} that we have
\begin{align}
\begin{aligned} \label{D-34}
{\mathcal I}_{43} &= \frac{2}{N} \sum_{i,j=1}^{N} \sum_{k = 0}^{m} {{m} \choose {k}} \partial_z^{m-k} \Big(  \psi(x_{ji}(t,z), z) -  \psi({\tilde x}_{ji}(t,z), z) \Big) \\
&\quad \times \partial_z^m \Delta^i_v(t,z) \cdot  \partial_z^k {\tilde v}_{ji}(t,z)  \\
&\leq {\mathcal P}_{m-1}(||\partial_z X(t,z)||, \cdots, ||\partial_z^{m-1} X(t,z)||) \frac{2C  \varepsilon_\psi  }{N} \\
&\quad \times  \sum_{k = 0}^{m} {{m} \choose {k}} \sum_{i,j=1}^{N} | \partial_z^m \Delta^i_v(t,z)| \cdot |\partial_z^k {\tilde v}_{ji}(t,z)| \\
& \leq  C 2^{m+1} \varepsilon_\psi  D_m(z) {\mathcal P}_{m-1}(||\partial_z X(t,z)||, \cdots, ||\partial_z^{m-1} X(t,z)||)  \\
&\quad\times || \partial_z^m \Delta_v(t,z)||  e^{-\frac{1}{2^m} \psi(\sqrt{2} x_M(z), z)t}. \\
\end{aligned}
\end{align}
Finally, in \eqref{D-28}, we combine all estimates \eqref{D-29}, \eqref{D-33} and \eqref{D-34} to obtain
\begin{align*}
\begin{aligned} \label{D-35}
\frac{d}{dt}  \| \partial_z^m  \Delta_v(t,z)\|^2 &\leq  -2 \psi(\sqrt{2} x_M(z), z) \| \partial_z^m \Delta_v(t,z) \|^2 \\
&\quad + 2\sqrt{2} C \varepsilon_\psi e^{-\frac{\psi_m(z)}{2} t} \Big( ||\partial_z^m X(t,z) || + \max_{i,j}  ||P_{i,j, m-1}|| \Big)  \| \partial_z^m \Delta_v(t,z) \| \\
&\quad + 4\sqrt{2} C \varepsilon_\psi   (2^m - 2) {\mathcal P}_{m-1}(||\partial_z X(t,z)||, \cdots, ||\partial_z^{m-1} X(t,z)||)  \\
&\quad \times  \|\partial_z^m \Delta_v(t,z) \| \Big( \max_{1\leq k \leq m-1} \|\partial_z^k \Delta_v(t,z) \| \Big) \\
&\quad + C 2^{m+1} \varepsilon_\psi  D_m(z) {\mathcal P}_{m-1}(||\partial_z X(t,z)||, \cdots, ||\partial_z^{m-1} X(t,z)||)  \\
&\quad \times || \partial_z^m \Delta_v(t,z)||  e^{-\frac{1}{2^m} \psi_m(z) t}.
\end{aligned}
\end{align*}
This again yields the desired second estimate in \eqref{D-27}:
\begin{equation}
\begin{aligned} \label{D-36}
&\frac{d}{dt}  \| \partial_z^m  \Delta_v(t,z) \| \\
& \hspace{0.5cm} \leq  - \psi_m(z) \| \partial_z^m \Delta_v(t,z) \| + \sqrt{2} C \varepsilon_\psi e^{-\frac{\psi_m(z)}{2} t} \Big( ||\partial_z^m X(t,z) || + \max_{i,j}  ||P_{i,j, m-1}|| \Big)  \\
&\hspace{0.5cm} + 2\sqrt{2} C \varepsilon_\psi   (2^m - 2) {\mathcal P}_{m-1}(||\partial_z X(t,z)||, \cdots, ||\partial_z^{m-1} X(t,z)||) \Big( \max_{1\leq k \leq m-1} \|\partial_z^k \Delta_v(t,z) \| \Big) \\
&\hspace{0.5cm} + C 2^{m} \varepsilon_\psi  D_m(z) {\mathcal P}_{m-1}(||\partial_z X(t,z)||, \cdots, ||\partial_z^{m-1} X(t,z)||) e^{-\frac{1}{2^m} \psi_m(z) t}.
\end{aligned}
\end{equation}
\end{proof}
Finally, we use the same inductive arguments as in Theorem \ref{T4.2} to obtain the local sensitivity analysis in the uniform stability estimate as follows.

\begin{theorem} \label{T4.3} For $m \in \bbz_+$, suppose that the communication weight function $\psi$ satisfies
\[
\sup_{(x, z) \in \bbr^d \times \Omega} |\nabla^{\alpha}_{x, z} \psi(x,z)| \leq \varepsilon_{\psi},  \quad |\alpha| \leq m,
\]
and let $\{X(t,z), V(t,z)) \}$ and $\{{\tilde X}(t,z), {\tilde V}(t,z)) \}$ be two solution processes to system \eqref{main}-\eqref{comm} with zero total momenta. Then, there exists positive random variable $E_\ell(z)$ and positive constant $\lambda_\ell$ such that for $t \geq 0$,
\begin{align*}
\begin{aligned}
\sum_{k = 0}^{\ell} \| \partial^k_z  \Delta_v(t,z)\|  &\leq \Big(\sum_{k=0}^{\ell} \| \partial^k_z  \Delta_v(0,z)\| +  E_\ell(z)  \Big) e^{-\frac{\psi_m(z)}{\lambda_\ell} t}, \quad t \geq 0,~~z \in \Omega, \\
 \sum_{k = 0}^{\ell} \| \partial^k_z  \Delta_x(t,z)\|  &\leq \max \Big\{1,  \frac{\lambda_\ell}{\psi_m(z)} \Big \} \Big( \sum_{k = 0}^{\ell} \| \partial^k_z  \Delta_x(0,z)\| + \sum_{k=0}^{\ell} \|\partial^k_z  \Delta_v(0,z)\|   +  E_\ell(z)  \Big).
\end{aligned}
\end{align*}
\end{theorem}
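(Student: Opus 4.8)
The plan is to run an induction on $\ell$ that parallels the proof of Theorem \ref{T4.2}: the driving differential inequalities are those of Lemma \ref{L4.4}, the uniform‑in‑time control of $\|\partial_z^k X\|$ and of the auxiliary polynomial terms comes from Theorem \ref{T4.2}, and the passage from the coupled inequalities to decay‑plus‑boundedness is carried out with the Gronwall‑type Lemma \ref{L3.4}. The base case $\ell=0$ is exactly Theorem \ref{T3.2}; more precisely, the first line of \eqref{C-12} gives the exponential decay $\|\Delta_v(t,z)\|\le {\bar B}_\infty\bigl(\|\Delta_x^0(z)\|+\|\Delta_v^0(z)\|\bigr)e^{-\frac{\psi_m(z)}{2}t}$ and its second line a uniform‑in‑time bound on $\|\Delta_x(t,z)\|$, so one takes $\lambda_0:=2$ and lets $E_0(z)$ absorb ${\bar B}_\infty$ and the $\|\Delta_x^0(z)\|$‑contribution.

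For the inductive step, suppose the two estimates hold at all orders $\le \ell-1$. First I would freeze the coefficients on the right of \eqref{D-27} with $m=\ell$: applying Theorem \ref{T4.2} to the solution $(X,V)$ yields, for each $1\le k\le \ell$, a pathwise bound $\sup_{t\ge 0}\|\partial_z^k X(t,z)\|<\infty$, hence uniform‑in‑time bounds for ${\mathcal P}_{\ell-1}(\|\partial_z X\|,\cdots,\|\partial_z^{\ell-1}X\|)$ and for $\max_{i,j}\|P_{i,j,\ell-1}\|$ (a degree‑$(\ell-1)$ polynomial in the $\partial_z^k x_{ij}$, $0\le k\le\ell-1$). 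Next, the induction hypothesis bounds $\max_{1\le k\le\ell-1}\|\partial_z^k\Delta_v(t,z)\|$ by $\bigl(\sum_{k=0}^{\ell-1}\|\partial_z^k\Delta_v(0,z)\|+E_{\ell-1}(z)\bigr)e^{-\frac{\psi_m(z)}{\lambda_{\ell-1}}t}$. Substituting these into \eqref{D-27} collapses the second inequality to
\[
\Bigl|\frac{d}{dt}\|\partial_z^\ell\Delta_x(t,z)\|\Bigr|\le\|\partial_z^\ell\Delta_v(t,z)\|,\qquad \frac{d}{dt}\|\partial_z^\ell\Delta_v(t,z)\|\le -\psi_m(z)\|\partial_z^\ell\Delta_v(t,z)\|+f_\ell(t,z),
\]
with $f_\ell(t,z)=C_\ell(z)e^{-\mu_\ell(z)t}$ nonnegative, nonincreasing and integrable, $\mu_\ell(z):=\psi_m(z)/\max\{\lambda_{\ell-1},2^\ell\}$, and $C_\ell(z)$ depending only on $m$, $\varepsilon_\psi$, the pathwise constants from Theorem \ref{T4.2}, $E_{\ell-1}(z)$ and $\sum_{k=0}^{\ell-1}\|\partial_z^k\Delta_v(0,z)\|$. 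Since no $\|\partial_z^\ell\Delta_x\|$‑term appears on the right, the pair decouples and one integrates it directly (the first inequality then being integrated against the resulting $\|\partial_z^\ell\Delta_v\|$), obtaining $\|\partial_z^\ell\Delta_v(t,z)\|\lesssim\bigl(\|\partial_z^\ell\Delta_v(0,z)\|+C_\ell(z)\bigr)e^{-\mu_\ell(z)t}$ and $\sup_{t\ge 0}\|\partial_z^\ell\Delta_x(t,z)\|\lesssim\|\partial_z^\ell\Delta_x(0,z)\|+\|\partial_z^\ell\Delta_v(0,z)\|+C_\ell(z)$. Adding these to the order‑$(\ell-1)$ estimates, setting $\lambda_\ell:=\max\{\lambda_{\ell-1},2^\ell\}$ and choosing $E_\ell(z)$ to absorb $E_{\ell-1}(z)$, $C_\ell(z)$ and the implicit constants, closes the induction.

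The hard part is the bookkeeping of decay rates: the inhomogeneity in Lemma \ref{L4.4} always carries the factor $e^{-\frac{1}{2^m}\psi_m(z)t}$, so the effective decay rate of $\|\partial_z^\ell\Delta_v\|$ strictly degrades at every level, and one must let $\lambda_\ell$ grow (roughly like $2^\ell$) and verify that, with this choice, the induction hypothesis still dominates every term generated at level $\ell$. Equally crucial — and the reason $f_\ell$ is integrable at all — is that the frozen quantities $\|\partial_z^k X\|$, ${\mathcal P}_{\ell-1}$ and $\max_{i,j}\|P_{i,j,\ell-1}\|$ are uniformly bounded in time; this is exactly where the flocking‑based propagation‑of‑regularity estimates of Theorem \ref{T4.2} (and hence the zero‑total‑momentum and pathwise flocking hypotheses \eqref{C-4}) are indispensable.
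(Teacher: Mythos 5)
Your proposal matches the paper's argument: the paper likewise proceeds by induction on $\ell$, taking Theorem \ref{T3.2} (via \eqref{C-12}) as the base case, substituting the propagation-of-regularity bounds from Theorems \ref{T4.1}--\ref{T4.2} and the lower-order decay into the differential inequalities of Lemma \ref{L4.4} so that the order-$\ell$ inequality decouples into $\frac{d}{dt}\|\partial_z^{\ell}\Delta_v\|\leq -\psi_m(z)\|\partial_z^{\ell}\Delta_v\|+D_\infty(z)e^{-\psi_m(z)t/2}$, and then closing with a Gronwall lemma (the paper carries out only $\ell=1$ explicitly, with $\lambda_1=4$ and $E_1(z)=2D_\infty(z)/\psi_m(z)$, and omits the higher-order bookkeeping you sketch). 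The only cosmetic difference is that you integrate the decoupled inequality directly by Duhamel whereas the paper invokes Lemma \ref{AL1.1}, which merely changes the admissible value of $\lambda_\ell$ and not the structure of the proof.
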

\begin{proof} We use the induction argument as in Theorem \ref{T4.2}. For $\ell = 0$, it follows from Theorem \ref{T3.1}, \eqref{C-12} and \eqref{D-24} that
\begin{align}
\begin{aligned} \label{D-37}
\| \Delta_v(t,z) \| &\leq  {\bar B}_\infty \Big( \| \Delta^0_x(z) \|  +  \| \Delta^0_v(z) \|  \Big)e^{-\frac{\psi_m(z)}{2} t },  \\
\| \Delta_x(t,z) \| &\leq \Big[  1 + \frac{2{\bar B}_\infty}{ \psi_m(z)} \Big) \Big] \Big( \| \Delta^0_x(z) \| +  \| \Delta^0_v(z) \| \Big),  \quad ||X(t,z)|| \leq x_M(z), \\
||\partial_z X(t,z)|| &\leq M_1(z) \Big (||\partial_z X(0,z)|| + ||\partial_z V(0,z)|| + \sqrt{2N}  \varepsilon_\psi  \Big).
\end{aligned}
\end{align}
On the other hand, it follows from \eqref{D-27} and \eqref{D-37} that for $\ell = 1$, we have
\begin{align*}
\begin{aligned} \label{D-38}
& \Big | \frac{d}{dt} \|  \partial_z \Delta_x(t,z) \| \Big| \leq \|  \partial_z \Delta_v(t,z) \|, \quad \mbox{a.e.,}~~t > 0, \quad z \in \Omega, \\
&\frac{d}{dt}  \| \partial_z  \Delta_v(t,z)\| \\
& \hspace{0.5cm} \leq  - \psi_m(z) \| \partial_z \Delta_v(t,z) \| + {\bar D} ||\partial_z X(t,z)|| \cdot \| \Delta_v(t,z) \| \\
& \hspace{0.5cm} + {\bar D} \Big(||X(t,z)|| + ||\partial_z X(t,z)||)  \Big) e^{-\frac{1}{2} \psi_m(z) t} \\
& \hspace{0.5cm} \leq  - \psi_m(z) \| \partial_z \Delta_v(t,z) \|  \\
& \hspace{0.5cm} + \Big[  {\bar D} {\bar B}_\infty \Big( \| \Delta^0_x(z) \|  +  \| \Delta^0_v(z) \|  \Big) M_1(z) \Big (||\partial_z X(0,z)|| + ||\partial_z V(0,z)|| + \sqrt{2N}  \varepsilon_\psi  \Big)  \\
& \hspace{0.5cm} +  x_M(z) + M_1(z) \Big (||\partial_z X(0,z)|| + ||\partial_z V(0,z)|| + \sqrt{2N}  \varepsilon_\psi  \Big) \Big] e^{-\frac{\psi_m(z)}{2} t } \\
& \hspace{0.5cm} =: - \psi_m(z) \| \partial_z \Delta_v(t,z) \|  + D_\infty(z) e^{-\frac{\psi_m(z)}{2} t },
\end{aligned}
\end{align*}
i.e., we have
\begin{equation*} \label{D-39}
\begin{cases}
\displaystyle \Big | \frac{d}{dt} \|  \partial_z \Delta_x(t,z) \| \Big| \leq \|  \partial_z \Delta_v(t,z) \|, \quad t > 0,~~z \in \Omega, \\
\displaystyle \frac{d}{dt}  \| \partial_z  \Delta_v(t,z)\| \leq - \psi_m(z) \| \partial_z \Delta_v(t,z) \|  + D_\infty(z) e^{-\frac{\psi_m(z)}{2} t }.
\end{cases}
\end{equation*}
Now, we apply Lemma \ref{AL1.1} in Appendix A for $\eqref{D-39}_2$ with
\[ \alpha =  - \psi_m(z), \qquad f = D_\infty(z) e^{-\frac{\psi_m(z)}{2} t } \]
to obtain
\begin{equation*} \label{D-40}
\| \partial_z  \Delta_v(t,z)\|  \leq  \Big( \| \partial_z  \Delta_v(0,z)\| +  2\frac{D_\infty(z)}{\psi_m(z)}  \Big) e^{-\frac{\psi_m(z)}{4} t}, \quad t \geq 0. \end{equation*}
This and $\eqref{D-39}_1$ imply
\begin{equation} \label{D-41}
  \| \partial_z  \Delta_x(t,z)\| \leq   \| \partial_z  \Delta_x(0,z)\| + \frac{4}{\psi_m(z)} \Big( \| \partial_z  \Delta_v(0,z)\| +  2\frac{D_\infty(z)}{\psi_m(z)}  \Big).
\end{equation}
We set
\[ E_1(z) := 2\frac{D_\infty(z)}{\psi_m(z)}, \quad \lambda_1 :=  4. \]
to get the desired estimate. Other higher-order estimates can be made inductively using the differential inequalities \eqref{D-27} in Lemma \ref{L4.4}. We omit its details.
\end{proof}

\section{Conclusion}\label{sec:5}
\setcounter{equation}{0}
In this paper, we presented local sensitivity analysis for the Cucker-Smale model with random communications. More precisely, we have presented two results. First, we provided conditions on the random commununications  for the pathwise flocking estimates along the sample path which give rise to deterministic flocking asymptotically, and obtained uniform stability analysis with respect to initial data. Second, we performed a local sensitivity analysis for the random Cucker-Smale model. To the best of our knowledge, this is the first theoretical work on the interplay between flocking dynamics and uncertainty quantification. One can also conduct
such analysis for the mean field kinetic equations for flockings \cite{H-T},
and other related models for collective dynamics, decision making and self-organization in complex systems coming from biology and social sciences \cite{Tad}. This will be the subject of future resarch.

\newpage

\appendix

\section{Proof of Lemma \ref{L3.4}} \label{App-1}
\setcounter{equation}{0}
In this appendix, we present a proof of Lemma \ref{L3.4} which is a slight generalization of  Gronwall's inequality appearing in \cite{H-K-Z}. \newline

We basically repeat the same arguments appearing in Lemma 3.1 of \cite{H-K-Z} using a bootstrapping argument. Note that
\begin{align}
\begin{aligned} \label{App-0}
& \Big| \frac{d{\mathcal X}}{dt} \Big| \leq {\mathcal V}, \quad \frac{d{\mathcal V}}{dt} \leq -\alpha {\mathcal V}  + \gamma e^{-\alpha t} {\mathcal X} + f,  \quad \mbox{a.e.}~~t > 0. \\
& ({\mathcal X}(0), {\mathcal V}(0)) = ({\mathcal X}^0, {\mathcal V}^0), \quad t = 0,
\end{aligned}
\end{align}
where $\alpha$ and $\gamma$ are positive constants and $f$ is a nonincreasing function. \newline

Then, we claim:
\begin{align}
\begin{aligned} \label{App-0-1}
& \mathcal{X}(t)\leq   \Big(1 + \frac{2 B_\infty(\alpha, \gamma) }{\alpha} \Big) ( \mathcal{X}^0+ {\mathcal V}^0 + f(0) + ||f||_{L^1}), \quad t \geq 0, \\
& \mathcal{V}(t)\leq B_\infty (\alpha, \gamma)  ({\mathcal X}^0 + {\mathcal V}^0 + f(0) + ||f||_{L^1} )  e^{-\frac{\alpha}{2} t} +  \frac{1}{\alpha} f \Big(\frac{t}{2} \Big).
\end{aligned}
\end{align}
First, we will show that the uniform bound of ${\mathcal V}$, and then we use this uniform bound to derive the exponential decay of ${\mathcal V}$ in two steps.

\vspace{0.2cm}

\noindent $\bullet$ Step A (Uniform boundedness of ${\mathcal V}$): In this step, we derive
\begin{equation} \label{App-0-1-1}
 {\mathcal V}(t) \leq A_\infty(\gamma) \Big( \mathcal{V}^0+ ||f||_{L^1}  +  \frac{\gamma}{\alpha}\mathcal{X}^0 \Big).
 \end{equation}
For this, we set a maximal function ${\mathcal M}_{\mathcal V}$:
\begin{equation} \label{App-0-2}
 {\mathcal M}_{\mathcal V}(t) := \max_{\tau \in [0, t]} {\mathcal V}(\tau), \quad t > 0.
\end{equation}
Next, we will show that
\begin{equation} \label{App-1}
{\mathcal M}_{\mathcal V}(t) \leq A_0(\gamma) \Big( {\mathcal V}^0 + \frac{\gamma}{\alpha} {\mathcal X}^0 + f(0) \Big) , \quad t \geq 0,
\end{equation}
where $A_\infty(\gamma) =  e^{\gamma \int_0^{\infty}} s e^{-\alpha s} ds$. \newline

It follows from the first differential inequality that
\[ \mathcal{X}(t)\leq \mathcal{X}^0 +\int_0^t \mathcal{V}(s)ds. \]
We substitute this into the second differential inequality to get
\begin{align}
\begin{aligned}\label{App-2-1}
\frac{d\mathcal{V}}{dt}&\leq -\alpha \mathcal{V}+\gamma e^{-\alpha t}\mathcal{X} + f \leq -\alpha \mathcal{V}+\gamma e^{-\alpha t}\Big(\mathcal{X}^0+\int_0^t \mathcal{V}(s)ds\Big) + f \\
&\leq \gamma e^{-\alpha t}\Big(\mathcal{X}^0+\int_0^t \mathcal{V}(s)ds\Big) + f.
\end{aligned}
\end{align}
Then, we integrate \eqref{App-2-1} to obtain
\begin{align*}
\begin{aligned}
\mathcal{V}(\tau) &\leq \mathcal{V}^0 + \int_0^\tau f(s) ds +\gamma \int_0^\tau e^{-\alpha s}\Big(\mathcal{X}^0+\int_0^s \mathcal{V}(u)du\Big)ds  \\
&\leq\mathcal{V}^0  + ||f||_{L^1} +\gamma \int_0^t e^{-\alpha s}\Big(\mathcal{X}^0+\int_0^s \mathcal{V}(u)du\Big)ds, \quad \tau \leq t.
\end{aligned}
\end{align*}
This and \eqref{App-0-2} imply
\begin{align}
\begin{aligned} \label{App-3}
{\mathcal M}_{\mathcal V}(t) &\leq \mathcal{V}^0 + ||f||_{L^1} + \gamma \int_0^t e^{-\alpha s}(\mathcal{X}^0+s {\mathcal M}_{\mathcal V}(s) )ds \\
&\leq \mathcal{V}^0+ ||f||_{L^1} + \frac{\gamma}{\alpha} \mathcal{X}^0 + \gamma \int_0^t s e^{-\alpha s} {\mathcal M}_{\mathcal V}(s) ds.
\end{aligned}
\end{align}
We set
\begin{equation} \label{App-5}
Z(t) := \mathcal{V}^0+ ||f||_{L^1}  +  \frac{\gamma}{\alpha}\mathcal{X}^0 + \gamma \int_0^t s e^{-\alpha s} {\mathcal M}_{\mathcal V}(s) ds.
\end{equation}
Then, it follows from \eqref{App-3} and \eqref{App-5} to have
\begin{equation} \label{App-6}
{\mathcal M}_{\mathcal V}(t) \leq Z(t).
\end{equation}
We differentiate $Z(t)$ using the relation \eqref{App-5} and use \eqref{App-6} to obtain
\[
{\dot Z}(t) = \gamma t e^{-\alpha t} {\mathcal M}_{\mathcal V}(t)  \leq \gamma t e^{-\alpha t} Z(t).
\]
This yields
\begin{align}
\begin{aligned} \label{App-6-1}
Z(t) &\leq Z^0 e^{\gamma \int_0^t s e^{-\alpha s} ds} = \Big( \mathcal{V}^0+ ||f||_{L^1}  +  \frac{\gamma}{\alpha}\mathcal{X}^0 \Big) e^{\gamma \int_0^t s e^{-\alpha s} ds} \\
&\leq A_\infty(\gamma) \Big( \mathcal{V}^0+ ||f||_{L^1}  +  \frac{\gamma}{\alpha}\mathcal{X}^0 \Big),
\end{aligned}
\end{align}
where $Z^0 = Z(0),~~A_\infty(\gamma) :=  e^{\gamma \int_0^{\infty} s e^{-\alpha s} ds}.$  \newline

\noindent Then, \eqref{App-6} and \eqref{App-6-1} yield \eqref{App-0-1-1}:
\[ {\mathcal V}(t) \leq M_{{\mathcal V}(t)} \leq Z(t) \leq A_\infty(\gamma) \Big( \mathcal{V}^0+ ||f||_{L^1}  +  \frac{\gamma}{\alpha}\mathcal{X}^0 \Big). \]

\vspace{0.5cm}

\noindent $\bullet$ Step B (Decay estimate of ${\mathcal V}(t)$): We use  \eqref{App-0}, \eqref{App-1} and
\[ ~\max_{0 \leq t < \infty} t e^{-\frac{\alpha t}{2}} = \frac{2}{\alpha e}, \qquad  \max_{0 \leq t < \infty} t^2 e^{-\frac{\alpha t}{2}} = \frac{16}{\alpha^2 e^2} \]
to obtain
\begin{align}
\begin{aligned} \label{App-7}
\mathcal{V}(t)&\leq \mathcal{V}^0 e^{-\alpha t}  +\gamma e^{-\alpha t}\int_0^t \Big(\mathcal{X}^0+\int_0^s\mathcal{V}(\tau)d\tau\Big) ds + \int_0^t e^{-\alpha(t-s)} f(s) ds \\
& \leq \mathcal{V}^0 e^{-\alpha t} +\gamma e^{-\alpha t}\int_0^t \Big[ \mathcal{X}^0 + A_\infty(\gamma) \Big( {\mathcal V}^0 + \frac{\gamma}{\alpha} {\mathcal X}^0 + f(0) \Big) s \Big] ds  \\
&+  e^{-\frac{\alpha t}{2}}  ||f||_{L^1}  + \frac{1}{\alpha} f \Big(\frac{t}{2} \Big) \\
&\leq \mathcal{V}^0 e^{-\frac{\alpha}{2} t}  +\gamma e^{-\frac{\alpha}{2} t} \Big[ \mathcal{X}^0 t  e^{-\frac{\alpha}{2} t}  + \frac{A_\infty(\gamma)}{2} \Big( {\mathcal V}^0 + \frac{\gamma}{\alpha} {\mathcal X}^0 + f(0) \Big) t^2  e^{-\frac{\alpha}{2} t}  \Big] \\
& +  e^{-\frac{\alpha t}{2}} ||f||_{L^1} + \frac{1}{\alpha} f \Big(\frac{t}{2} \Big) \\
& \leq e^{-\frac{\alpha}{2} t} \Big[ \frac{\gamma}{\alpha} \Big( \frac{2}{e} + \frac{8 \gamma A_\infty(\gamma)}{\alpha^2 e^2} \Big) {\mathcal X}^0 +
\Big( 1 + \frac{8 \gamma A_\infty(\gamma)}{\alpha^2 e^2} \Big) {\mathcal V}^0  \\
&+  \frac{8 \gamma A_\infty(\gamma)}{\alpha^2 e^2}  f(0) +  ||f||_{L^1} \Big] +  \frac{1}{\alpha} f \Big(\frac{t}{2} \Big) \\
& \leq \max \Big \{ \frac{\gamma}{\alpha}, 1  \Big \} \Big( 1 + \frac{8 \gamma A_\infty(\gamma)}{\alpha^2 e^2} \Big) ({\mathcal X}^0 + {\mathcal V}^0 + f(0) +   ||f||_{L^1} )  e^{-\frac{\alpha}{2} t} +  \frac{1}{\alpha} f \Big(\frac{t}{2} \Big) \\
& =: B_\infty (\alpha, \gamma)  ({\mathcal X}^0 + {\mathcal V}^0 + f(0) +   ||f||_{L^1} )  e^{-\frac{\alpha}{2} t} +  \frac{1}{\alpha} f \Big(\frac{t}{2} \Big),
\end{aligned}
\end{align}
where in the second inequality, we used the relation:
\begin{align*}
\begin{aligned}
&\int_0^t e^{-\alpha(t-s)} f(s) ds \\
& \hspace{1cm} =  \int_0^{\frac{t}{2}} e^{-\alpha(t-s)} f(s) ds  + \int_{\frac{t}{2}}^t e^{-\alpha(t-s)} f(s) ds  \leq e^{-\frac{\alpha t}{2}} \int_0^{\frac{t}{2}} f(s) ds + f \Big(\frac{t}{2} \Big) \int_{\frac{t}{2}}^t e^{-\alpha(t-s)}ds  \\
& \hspace{1cm} \leq e^{-\frac{\alpha t}{2}}  \Big[  ||f||_{L^1}  -  \frac{1}{\alpha} f \Big(\frac{t}{2} \Big) \Big ] + \frac{1}{\alpha} f \Big(\frac{t}{2} \Big) \leq  e^{-\frac{\alpha t}{2}} ||f||_{L^1}   + \frac{1}{\alpha} f \Big(\frac{t}{2} \Big).
\end{aligned}
\end{align*}

\vspace{0.5cm}

\noindent $\bullet$ Step C (Uniform bound of ${\mathcal X}(t)$): We use \eqref{App-1} and \eqref{App-7}  to obtain
\begin{align}
\begin{aligned} \label{App-8}
\mathcal{X}(t) &\leq \mathcal{X}^0+\int_0^t\mathcal{V}(s)ds \\
& \leq \mathcal{X}^0 + \frac{2B_\infty (\alpha, \gamma)}{\alpha} ({\mathcal X}^0 + {\mathcal V}^0 + f(0) +   ||f||_{L^1} ) + \frac{2}{\alpha} ||f||_{L^1}.
\end{aligned}
\end{align}
Thus, \eqref{App-7} and \eqref{App-8} imply the desired estimates \eqref{App-0-1}. This completes the proof of Lemma  \ref{L3.4}.

\vspace{0.5cm}

By the similar argument, we also have the following Gronwall's lemma.
\begin{lemma} \label{AL1.1}
\emph{\cite{C-H-H-J-K}}
Let $y: {\mathbb R}_+ \cup \{0 \} \to {\mathbb R}_+ \cup \{0 \}$ be a differentiable function satisfying
\[ y^{\prime} \leq - \alpha y + f, \quad t > 0, \qquad y(0) = y_0, \]
where $\alpha$ is a positive constant and $f : {\mathbb R}_+ \cup \{0 \} \rightarrow {\mathbb R}$ is a continuous function decaying to zero as its argument goes to infinity. Then $y$ satisfies
\[ y(t) \leq \frac{1}{\alpha}  \max_{s \in [t/2, t]} |f(s)|+y_0 e^{-\alpha t}+\frac{\|f\|_{L^{\infty}}}{\alpha}e^{-\frac{\alpha t}{2}}, \quad t \geq 0. \]
\end{lemma}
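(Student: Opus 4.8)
The plan is to derive the estimate from Duhamel's representation for the scalar linear differential inequality, followed by a splitting of the resulting time-convolution at the midpoint $t/2$. First I would convert the differential inequality into an integral inequality: multiplying $y^{\prime} \le -\alpha y + f$ by the integrating factor $e^{\alpha t}$ and setting $w(t) := e^{\alpha t} y(t)$ gives $w^{\prime}(t) \le e^{\alpha t} f(t)$ for $t>0$; integrating on $[0,t]$ and dividing back by $e^{\alpha t}$ produces
\[ y(t) \le y_0 e^{-\alpha t} + \int_0^t e^{-\alpha(t-s)} f(s)\, ds, \qquad t \ge 0. \]
This step uses only that $y$ is differentiable (so that $w$ is absolutely continuous) and that $f$, being continuous on $[0,\infty)$ and decaying to zero, is bounded, so that $\|f\|_{L^{\infty}}<\infty$.

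Next I would split the convolution integral at $t/2$ and estimate the two pieces differently, according to the two competing effects: decay of the exponential kernel for small $s$, and smallness of $f$ itself for $s$ near $t$. On $[0,t/2]$ I bound $|f|$ crudely by $\|f\|_{L^{\infty}}$ and compute the kernel integral exactly, using $\int_0^{t/2} e^{-\alpha(t-s)}\,ds = \alpha^{-1}\big(e^{-\alpha t/2} - e^{-\alpha t}\big) \le \alpha^{-1} e^{-\alpha t/2}$; this yields a contribution bounded by $\alpha^{-1}\|f\|_{L^{\infty}} e^{-\alpha t/2}$. On $[t/2,t]$ I bound $|f(s)|$ by $\max_{s\in[t/2,t]}|f(s)|$ and use $\int_{t/2}^t e^{-\alpha(t-s)}\,ds = \alpha^{-1}(1-e^{-\alpha t/2}) \le \alpha^{-1}$, giving a contribution bounded by $\alpha^{-1}\max_{s\in[t/2,t]}|f(s)|$. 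Adding these two bounds to the $y_0 e^{-\alpha t}$ term from Duhamel's formula gives exactly
\[ y(t) \le \frac{1}{\alpha}\max_{s\in[t/2,t]}|f(s)| + y_0 e^{-\alpha t} + \frac{\|f\|_{L^{\infty}}}{\alpha} e^{-\frac{\alpha t}{2}}, \qquad t \ge 0, \]
which is the claimed inequality.

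There is no serious obstacle here; the lemma is essentially a sharpened form of the standard fact that $y^{\prime} \le -\alpha y + f$ with $f \to 0$ forces $y \to 0$. The only point requiring a moment's thought is the midpoint split: one must not bound $\int_0^t e^{-\alpha(t-s)}|f(s)|\,ds$ directly by $\alpha^{-1}\|f\|_{L^{\infty}}$, since that does not decay; splitting at $t/2$ lets the kernel supply the exponential gain $e^{-\alpha t/2}$ on the early interval, while the late interval is controlled by the (small) local supremum of $f$. The remaining ingredients — justifying the integrating-factor passage from the differential to the integral inequality, and the boundedness of $f$ — are routine.
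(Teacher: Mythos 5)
Your proof is correct and follows essentially the same route as the paper: both pass to the integral form via the integrating factor $e^{\alpha t}$ and then split the resulting integral at the midpoint $t/2$, bounding $|f|$ by $\|f\|_{L^\infty}$ on the early piece (where the kernel supplies the decay $e^{-\alpha t/2}$) and by $\max_{[t/2,t]}|f|$ on the late piece. The only cosmetic difference is that the paper carries out the split on $\int_0^t f(\tau)e^{\alpha\tau}\,d\tau$ before dividing by $e^{\alpha t}$, whereas you split the convolution $\int_0^t e^{-\alpha(t-s)}f(s)\,ds$ directly; the estimates are identical.
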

\begin{proof}
Note that $y$ satisfies
\[ y^{\prime} + \alpha y \leq f. \]
We multiply the above differential inequality by $e^{\alpha t}$ and integrate the resulting relation from $s = 0$ to $s=t$ to find
\begin{eqnarray*}
e^{\alpha t} y - y_0 &\leq& \int_0^t f(\tau) e^{\alpha \tau} d\tau \cr
               &=& \int_0^{\frac{t}{2}} f(\tau) e^{\alpha \tau} d\tau + \int_{\frac{t}{2}}^{t} f(\tau) e^{\alpha \tau} d\tau \cr
               &\leq&  \|f\|_{L^{\infty}} \int_0^{\frac{t}{2}} e^{\alpha \tau} d\tau +  \max_{\tau \in [\frac{t}{2},t]} |f(\tau)| \int_{\frac{t}{2}}^{t} e^{\alpha \tau} d\tau \cr
                &\leq&  \frac{\|f\|_{L^{\infty}}}{\alpha} \Big( e^{\frac{\alpha t}{2}} - 1 \Big) + \frac{1}{\alpha}\max_{\tau \in [\frac{t}{2},t]} |f(\tau)|  \Big( e^{\alpha t} - e^{\frac{\alpha t}{2}} \Big).
\end{eqnarray*}
Hence,
\[
 y(t) \leq \frac{1}{\alpha} \max_{\tau \in [\frac{t}{2},t]} |f(\tau)|  +  \Big( y_0 - \frac{\|f\|_{L^{\infty}}}{\alpha}  \Big) e^{-\alpha t} +  \Big( \frac{\|f\|_{L^{\infty}}}{\alpha} -\frac{1}{\alpha}  \max_{\tau \in [\frac{t}{2},t]} |f(\tau)| \Big) e^{-\frac{\alpha t}{2}}.
 \]
 Therefore, for $t \geq 0$,
 \[ y(t) \leq \frac{1}{\alpha} \max_{\tau \in [\frac{t}{2},t]} |f(\tau)|  +  y_0  e^{-\alpha t} + \frac{\|f\|_{L^{\infty}}}{\alpha} e^{-\frac{\alpha t}{2}}. \]
\end{proof}

\section{Chain rules for higher derivatives} \label{App-B}
\setcounter{equation}{0}
In this appendix, we quote the formula for the chain rules for higher derivatives of composition function from \cite{H-M-Y} for reader's convenience. The proof can be made using the mathematical induction.  We first introduce an index set: for given positive integer $n$,
\[ \Lambda(n)  := \{ (k_1, \cdots, k_n) \in (\bbz_+ \cup \{0\} )^n~:~ k_1 + 2 k_2 + \cdots + n k_n = n \}. \]
Note that $(0, \cdots, 0, 1)$ is an element of $\Lambda(n)$. Then, $n$-th derivative of $f(g(x))$ is given by the following formula:
\[ \frac{d^n}{dx^n} f(g(x)) = \sum_{(k_1, \cdots, k_n) \in \Lambda(n)} \frac{n!}{k_1 ! \cdots k_n !} f^{(k)}(g(x)) \Big(\frac{g^{\prime}(x)}{1!}  \Big)^{k_1}
\Big(\frac{g^{\prime \prime}(x)}{2!}  \Big)^{k_2} \cdots  \Big(\frac{g^{(n)}(x)}{n!}  \Big)^{k_n},        \]
where $k := k_1 + \cdots + k_n$.

\end{document}